\def\N{\mathbb{N}}
\def\R{\mathbb{R}}
\def\J{\mathscr{J}}
\def\F{\mathscr{F}}
\def\T{\mathscr{T}}
\def\G{\mathscr{G}}
\def\H{\mathscr{H}}
\def\na{\nabla}
\def\pa{\partial}
\def\RR{\mathbb{R}}
\def\eps{\varepsilon}
\def\vphi{\varphi}
\def\A{\mathscr A}
\def\p{{\bf p}}
\def\q{{\bf q}}
\def\ds{\displaystyle} 
\def\div{{\rm div}}
\def\Lip{{\rm Lip}}
\def\refe#1{(\ref{#1})}
\newtheorem{theorem}{Theorem}[section]
\newtheorem{lemma}[theorem]{Lemma}
\newtheorem{definition}[theorem]{Definition}
\newtheorem{remark}[theorem]{Remark}
\newtheorem{proposition}[theorem]{Proposition}
\newtheorem{corollary}[theorem]{Corollary}
\author{Antoine Mellet\thanks{Department of Mathematics, University of Maryland, College Park MD 20742, USA. Email: mellet@math.umd.edu}{ }\thanks{Partially supported by NSF Grant DMS-0901340} and Julien Vovelle\thanks{Universit\'e de Lyon ; CNRS ; Universit\'e Lyon 1, Institut Camille Jordan,  43 boulevard du 11 novembre 1918, F-69622 Villeurbanne Cedex, France. Email: vovelle@math.univ-lyon1.fr}}
\date{}
\title{Existence and regularity of extremal solutions for a  mean-curvature equation}
\begin{document}
\maketitle

\begin{abstract}
We study a class of mean curvature equations $-\mathcal Mu=H+\lambda u^p$
where $\mathcal M$ denotes the mean curvature operator and for $p\geq 1$. 
We  show that there exists an extremal parameter $\lambda^*$ such that this equation admits a minimal weak solutions for all $\lambda \in [0,\lambda^*]$, while no weak solutions exists for $\lambda >\lambda^*$ (weak solutions will be defined as critical points of a suitable functional).
In the radially symmetric case, we then show that minimal weak solutions are classical solutions for all $\lambda\in [0,\lambda^*]$ and that another branch of classical solutions exists in a neighborhood $(\lambda_*-\eta,\lambda^*)$ of $\lambda^*$.
\end{abstract}

\vspace{10pt}

\noindent{\it 2000 Mathematics Subject Classification:}  53A10 (35J60)
\vspace{10pt}

\noindent{\it Keywords:} Mean curvature, minimal solution, semi-stable solution, extremal solution, regularity. 
\vspace{10pt}


\section{Introduction}


Let $\Omega$ be a bounded open subset of $\RR^n$ with smooth boundary $\pa\Omega$. 
The aim of this paper is to study the existence and regularity of non-negative solutions for the following mean-curvature problem:
\begin{equation}\label{eq:1} \tag{\mbox{$P_\lambda$}}
\left\{
\begin{array}{ll}
-\mbox{div} (Tu) = H+\lambda f(u) &\mbox{ in } \Omega,\\[5pt]
u=0 & \mbox{ on } \pa\Omega ,
\end{array}
\right.
\end{equation}
where 
$$ Tu: =\frac{\na u}{\sqrt{1+|\na u|^2}}$$
and
$$ f(u) = |u|^{p-1} u,\qquad p\geq 1.$$ 
Formally, Equation \refe{eq:1} is the Euler-Lagrange equation for the minimization of the functional
\begin{equation}\label{eq:F}
 \F_\lambda(u) := \int_\Omega \sqrt{1+|\na u|^2} - \int_\Omega  H u + \lambda F(u)\, dx + \int _{\pa \Omega} |u|\, d\H^{n-1}(x)
\end{equation}
with $F(u) = \frac{1}{p+1} |u|^{p+1}$ (convex function).
\smallskip

When $\lambda=0$, Problem (\ref{eq:1}) reduces to a prescribed mean-curvature equation, which has been extensively studied (see for instance  Bernstein \cite{Bernstein10}, Finn \cite{Finn65}, Giaquinta \cite{Giaquinta74}, Massari \cite{Ma} or Giusti \cite{Giusti76,Giusti78}).
In particular, it is well known that a necessary condition for the existence of a classical solution of (\ref{eq:1}) when $\lambda=0$ (or the existence of a minimizer of $ \F_{\lambda=0}$) is
\begin{equation}\label{eq:A}
\left| \int_A H\, dx\right| < P(A), \mbox{ for all proper subset $A$ of $\Omega$,} 
\end{equation}
where $P(A)$ is the perimeter of $A$ (see \refe{defperimeter} for the definition of the perimeter).
It is also known that the following is a sufficient condition (see Giaquinta \cite{Giaquinta74}):
\begin{equation}\label{eq:AA}
\left| \int_A H\, dx\right| \leq (1-\eps_0) P(A), \mbox{ for all measurable set $A\subset\Omega$,} 
\end{equation}
for some $\eps_0>0$.
\smallskip


Equation \refe{eq:1} has also been studied for $\lambda<0$ and $p=1$ ($f(u)=u$), in particular in the framework of capillary surfaces (in that case, the Dirichlet boundary condition is often replaced by a  contact angle condition. We refer the reader to the excellent book of Finn \cite{Finn86} for more details on this topic).
The existence of minimizers of (\ref{eq:F}) when $\lambda<0$ is proved, for instance, by  Giusti \cite{Giusti76} and Miranda \cite{Mi}.
\smallskip

In this paper, we are interested in the case $\lambda>0$. In that case, the functional $ \F_\lambda$ is not convex, and the existence and regularity results that hold when $\lambda\leq 0$ no longer apply. The particular case $p=1$ corresponds to the classical pendent drop problem (with the gravity pointing upward in our model).
The pendent drop in a capillary tube (Equation (\ref{eq:1}) in a fixed domain but with contact angle condition rather than Dirichlet condition) has been studied in particular by  Huisken \cite{Huisken83}-\cite{Huisken84}, while 
the corresponding free boundary problem, which describes a pendent drop hanging from a flat surface has been studied by  Gonzalez,  Massari and  Tamanini \cite{GMT} and  Giusti \cite{Giusti80}.
In \cite{Huisken84}, Huisken also  studies the Dirichlet boundary problem (\ref{eq:1}) when $p=1$ (with possibly non-homogeneous boundary condition). This problem models a pendent drops hanging from a fixed boundary, such as the end of a pipette. Establishing suitable gradient estimates, Huisken proves the existence of a solution for small $\lambda$ (see also Stone \cite{stone} for a proof by convergence of a suitable evolution process).
In \cite{CF}, Concus and Finn characterize the profile of the radially symmetric pendent drops, thus finding explicit solutions for this mean curvature problem.
Finally, in the case $H=0$, other power like functions $f(u)$ have been considered, in particular by  Pucci and  Serrin \cite{ps86} and  Bidaut-V\'eron \cite{MFBV}. In that case, non-existence results can be obtained for $f(u)=u^p$ if $p\geq \frac{N+2}{N-2}$. 
Note however that in our paper, we will always assume that $H>0$ (see condition (\ref{eq:CC})), and we will in particular show that a solution exists for all values of $p$, at least for small $\lambda>0$.

\subsection{Branches of minimal and non-minimal weak solutions}
Through most of the paper, we will study {\it weak} solutions of (\ref{eq:1}), which we will define as critical points of a suitable functional  in $\mathrm{BV}(\Omega)\cap L^{p+1}(\Omega)$ (see Definition \ref{defsol}).
In the radially symmetric case, we will see that those weak solutions are actually {\it classical} solutions (see Section~\ref{sec:classicalsol}) in $\mathcal C^{2,\alpha}(\overline{\Omega})$ of (\ref{eq:1}). 

As noted above, a first difficulty when $\lambda>0$ is that the functional $\F_\lambda$ is not convex and not bounded below. So global minimizers clearly do not exist. 
However, under certain assumptions on $H$ (which guarantee the existence of a solution for $\lambda=0$), it is still possible to show that solutions of (\ref{eq:1}) exist for small values of $\lambda$ (this is proved in particular by Huisken \cite{Huisken84} in the case $p=1$).
The goal of this paper is to show, under appropriate assumptions on $H$ and for $p\geq 1$ that
\begin{itemize}
\item[1.] there exists an extremal parameter $\lambda^*>0$ such that (\ref{eq:1}) admits a minimal non-negative weak solutions $u_\lambda$ for all $\lambda \in [0,\lambda^*]$, while no weak solutions exists for $\lambda >\lambda^*$ (weak solutions will be defined as critical points of the energy functional that satisfy the boundary condition (see Definition~\ref{defsol}), and by minimal solution, we mean the smallest non-negative solution),
\item[2.] minimal weak solutions are uniformly bounded in $L^\infty$ by a constant depending only on $\Omega$ and the dimension. 
\end{itemize}
We then investigate the regularity of the minimal weak solutions, and prove that
\begin{itemize}
\item[3.] in the radially symmetric case, the set $\{u_\lambda\,;\, 0\leq \lambda\leq \lambda^*\}$ is a branch of classical solutions
(see Section~\ref{sec:classicalsol} for a precise definition of classical solution).
In particular, we will show that the extremal solution $u_{\lambda^*}$, which is the increasing limit of $u_\lambda$ as $\lambda\to\lambda^*$, is itself a classical solution. 
\item[4.] It follows that in the radially symmetric case, there exists another branch of (non-minimal) solutions for $\lambda$ in a neighborhood $[\lambda^*-\eta,\lambda^*]$ of $\lambda^*$.
\end{itemize}

Those results will be stated more precisely in Section~\ref{sec:mainresults}, after we introduce some notations in Sections \ref{sec:weaksolu} and \ref{sec:classicalsol}.
The rest of the paper will be devoted to the proofs of those results.

\subsection{Semi-linear elliptic equations}

These results and our analysis of Problem~\refe{eq:1} are guided by the study of the following classical problem:
\begin{equation}\label{eq:laplace}
\left\{
\begin{array}{ll}
\ds -\Delta u = g_\lambda(u) & \mbox{ in } \Omega \\[2pt]
\ds u=0& \mbox{ on } \pa\Omega.
\end{array}
\right.
\end{equation}
It is well known that if $g_\lambda(u)=\lambda f(u)$, with $f$ superlinear and $f(0)>0$, then 
there exists a critical value $\lambda^*\in(0,\infty)$ for the parameter $\lambda$  such that one (or more) solution exists for $\lambda <\lambda^*$, a unique weak solution $u^*$ exists for $\lambda=\lambda^*$ and there is no solution for $\lambda>\lambda^*$ (see \cite{CR}).
And one of the key issue in the study of \refe{eq:laplace} is whether the extremal solution $u^*$ is a classical solution or $u_\lambda$ blows up when $\lambda\to\lambda^*$ (see \cite{KK,BC,MironescuRadulescu96,martel}).

Classical examples that have been extensively studied include power growth $g_\lambda (u)=\lambda(1+u)^p$ ($p>1$) and the celebrated Gelfand problem $g_\lambda(u) = \lambda e^u$ (see \cite{joseph-lundgren,mignot-puel,BV}).
For such non-linearities, the minimal solutions, including the extremal solution $u^*$ can be proved to be classical, at least in low dimension.
In particular, for $g_\lambda (u)=\lambda(1+u)^p$, $u^*$ is a classical solution if 
$$n-2 <F(p):= \frac{4p}{p-1} +4 \sqrt{\frac{p}{p-1}}$$
(see Mignot-Puel \cite{mignot-puel}) while when $\Omega=B_1$ and $n-2\geq F(p)$, it can be proved that $u^*\sim Cr^{-2}$ (see Brezis-V\'azquez \cite{BV}).
For very general non-linearities of the form $g_\lambda (u)=\lambda f(u)$ with $f$ superlinear, Nedev \cite{nedev} proves the regularity of $u^*$ in low dimension while Cabr\'e \cite{Cabre06} and  Cabr\'e-Capella \cite{CabreCapella06,CabreCapella07} obtain optimal regularity results for $u^*$ in the radially symmetric case.

Other examples of non-linearity have been studied, such as $g_\lambda(x,u)=f_0(x,u)+\lambda \varphi(x)+f_1(x)$ (see Berestycki-Lions \cite{berestyckilions81}) or $g_\lambda(x,u)= \lambda f(x)/(1-u)^2$ (see Ghoussoub et al.~\cite{GhoussoubGuo06,EspositoGhoussoubGuo07,GhoussoubGuo08}). 
\vspace{10pt}

Our goal is to study similar behavior for the mean-curvature operator.
In the present paper, we only consider functions $g_\lambda (u)=H+\lambda u^p$, but the techniques introduced here can and will be extended to more general non-linearities in a forthcoming paper.


\section{Definitions and main theorems}


\subsection{Weak solutions}\label{sec:weaksolu}

We recall that $\mathrm{BV}(\Omega)$ denotes the set of functions in $L^1(\Omega)$ with bounded total variation over $\Omega$, that is:
\begin{equation*}
\ds\int_\Omega|Du|:=\sup\left\{\ds\int_\Omega u(x)\div(g)(x)\, dx;g\in\mathcal C^1_c(\Omega)^n,|g(x)|\leq 1\right\} <+\infty.
\end{equation*}
The space $\mathrm{BV}(\Omega)$ is equipped with the norm
$$ \|u\|_{ \mathrm{BV}(\Omega)} = \|u\|_{L^1(\Omega)} + \int_\Omega|Du|.$$
If $A$ is a Lebesgue subset of $\RR^n$, its perimeter $P(A)$ is defined as the total variation of its characteristic function $\varphi_A$:
\begin{equation}\label{defperimeter}
P(A):=\ds\int_{\RR^n}|D\varphi_A|,\quad \varphi_A(x)=\left\{\begin{array}{ll}1 &\mbox{ if }x\in A,\\ 0&\mbox{ otherwise.}\end{array}\right. 
\end{equation}
For $u\in \mathrm{BV}(\Omega)$, we define the ``area" of the graph of $u$ by
\begin{equation}\label{defLu}
\ds\A(u):=\int_\Omega \sqrt{1+|\na u|^2}=\sup\left\{\ds\int_\Omega g_0(x)+u(x)\div(g)(x)\, dx\right\},
\end{equation}
where the supremum is taken over all functions $g_0\in\mathcal  C^1_c(\Omega), g\in\mathcal  C^1_c(\Omega)^n$ such that $|g_0|+|g|\leq 1$ in $\Omega$. An alternative definition is $\A(u)=\int_{\Omega\times\R}|D\varphi_U|$ where $U$ is the subgraph of $u$.
We have, in particular
\begin{equation}
\int_\Omega |Du| \leq \int_\Omega \sqrt{1+|\na u|^2} \leq |\Omega| + \int_\Omega |Du|.
\label{eq:AreaVariation}\end{equation}

\vspace{10pt}

A major difficulty, when developing a variational approach to (\ref{eq:1}), is to deal with the boundary condition. It is well known that even when $\lambda=0$, minimizers of $\F_\lambda$ may not satisfy the homogeneous Dirichlet condition (we need an additional condition on $H$ and the curvature of $\pa\Omega$, see below condition~\refe{eq:B}).
Furthermore, the usual techniques used to handle this issue, which work when $\lambda\leq 0$  do not seem to generalize easily to the case $\lambda>0$.
For this reason, we will not use the functional $\F_\lambda$ in our analysis. Instead, we will define the solutions of (\ref{eq:1}) as the ``critical points" (the definition is made precise below, see Definition~\ref{defsol} and Remark~\ref{rk:defsol}) of the functional
\begin{equation}\label{eq:J}
\J_\lambda(u): = \int_\Omega \sqrt{1+|\na u|^2} - \int_\Omega H(x) u + \lambda F(u)\, dx 
\end{equation}
which satisfy the boundary condition $u=0$ on $\pa\Omega$.

\begin{proposition}[Directional derivative of the area functional] For any $u,\varphi\in \mathrm{BV}(\Omega)$ the limit
\begin{equation}
{\mathcal L} (u)(\varphi):=\ds\lim_{t\downarrow 0}\ds\frac{1}{t}\left(\A(u+t\varphi)-\A(u)\right)
\label{def:Lu}\end{equation}
exists and, for all $u,v\in \mathrm{BV}(\Omega)$
\begin{equation}
\A(u)+{\mathcal L} (u)(v-u)\leq\A(v).
\label{eq:AconvexOrdre1}\end{equation}
\label{prop:Aprime}\end{proposition}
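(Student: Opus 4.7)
The plan is to exploit the convexity of $\A$, which is built into its definition \refe{defLu} as the supremum of affine functionals in $u$. Indeed, for each admissible test pair $(g_0, g)$, the map $u \mapsto \int_\Omega g_0(x) + u(x)\,\div(g)(x)\, dx$ is affine in $u$, so the supremum is a convex functional on $\mathrm{BV}(\Omega)$. This is the only nontrivial ingredient; everything else follows from standard one-variable convex analysis.

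Fix $u, \vphi \in \mathrm{BV}(\Omega)$ and consider the scalar function $\phi(t) := \A(u + t\vphi)$. Since $\mathrm{BV}(\Omega)$ is a vector space, $u + t\vphi \in \mathrm{BV}(\Omega)$ for every $t \in \RR$, hence $\phi : \RR \to [0,\infty)$ is well defined and finite, and the convexity of $\A$ transfers to convexity of $\phi$ on $\RR$. For any convex function $\phi$ defined on all of $\RR$, the difference quotient $t \mapsto (\phi(t) - \phi(0))/t$ is non-decreasing, so the right derivative $\phi'_+(0)$ exists in $[-\infty, \infty)$; and it is finite because for $s < 0 < t$ the monotonicity of difference quotients gives
\begin{equation*}
\frac{\phi(t) - \phi(0)}{t} \geq \frac{\phi(0) - \phi(s)}{-s},
\end{equation*}
so taking for instance $s = -1$ yields the finite lower bound $\phi'_+(0) \geq \A(u) - \A(u - \vphi)$. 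This proves that the limit in \refe{def:Lu} exists and is finite, i.e., that $\mathcal{L}(u)(\vphi)$ is well defined.

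For the convexity inequality \refe{eq:AconvexOrdre1}, I would apply the convexity of $\A$ along the segment from $u$ to $v$: for $t \in (0,1)$,
\begin{equation*}
\A(u + t(v-u)) = \A\bigl((1-t)u + tv\bigr) \leq (1-t)\A(u) + t\A(v),
\end{equation*}
which rearranges to $(\A(u + t(v-u)) - \A(u))/t \leq \A(v) - \A(u)$. Sending $t \downarrow 0$ and using the existence of the limit established above gives ${\mathcal L}(u)(v - u) \leq \A(v) - \A(u)$, which is exactly \refe{eq:AconvexOrdre1}.

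I do not foresee any genuine obstacle: the core of the argument is the observation that $\A$, being a supremum of affine functionals, is convex, after which both the existence of the one-sided directional derivative and the first-order convexity inequality are formal consequences. The only mild subtlety is verifying that $\phi'_+(0)$ does not equal $-\infty$; this is handled cleanly by the fact that $\phi$ is defined and finite on the whole real line (not only on $[0,\infty)$), via the test point $t = -1$ as above.
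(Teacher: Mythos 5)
Your proof is correct and follows essentially the same route as the paper: the convexity of $\A$ (noted in the paper, explicit in your remark that it is a supremum of affine functionals) gives both the existence of the one-sided directional derivative and, by passing to the limit in the secant inequality along the segment from $u$ to $v$, the first-order inequality \refe{eq:AconvexOrdre1}. The only addition beyond the paper's proof is your remark that $\phi'_+(0)>-\infty$ via the test point $t=-1$, which the paper leaves implicit; this is a harmless and correct bit of extra care.
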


\begin{proof} The existence of the limit in (\ref{def:Lu}) follows from the convexity of the application $t\mapsto\A(u+t\varphi)$. By convexity also, we have
\begin{equation*}
\A(u+t(v-u))\leq (1-t)\A(u)+t\A(v),\quad 0\leq t\leq 1,
\end{equation*}
whence
\begin{equation*}
\A(u)+\ds\frac{1}{t}\left(\A(u+t(v-u))-\A(u)\right)\leq \A(v),\quad 0< t\leq 1,
\end{equation*}
which gives \refe{eq:AconvexOrdre1} at the limit $t\to 0$.
\end{proof}
We stress out the fact $ \mathcal L(u)$ is not linear, since we might not have 
$$ {\mathcal L} (u)(-\varphi)=-{\mathcal L} (u)(\varphi)$$
for all $\vphi$ (for instance if $\vphi$ is the characteristic function of a set $A$).

With the definition of ${\mathcal L} (u)$ given by Proposition~\ref{prop:Aprime}, it is readily seen that local minimizers of $\J_0\colon u\mapsto\A(u)-\int_\Omega Hu\, dx$ in $\mathrm{BV}(\Omega)$ satisfy 
\begin{equation} 
{\mathcal L} (u)(\vphi) \geq \int_\Omega H\vphi\qquad \mbox{ for all } \vphi\in \mathrm{BV}(\Omega).
\label{eq:EulerLagrangeareaH}\end{equation}
There is equality in \refe{eq:EulerLagrangeareaH} if $u$ and $\vphi$ are smooth enough, but strict inequality if, for instance, $\vphi=\vphi_A$ and
$\J_0(u)<\J_0(u+t\varphi_A)$ for a $t>0$ since 
\begin{equation*}
\ds\frac{1}{t}\left(\A(u+t\varphi_A)-\A(u)\right)=P(A),\forall t>0.
\end{equation*}

We thus consider the following definition:
\begin{definition}[Weak solution]  A function $u\in L^{p+1}\cap \mathrm{BV}(\Omega) $ is said to be a weak solution of \refe{eq:1} if it satisfies
\begin{equation}\label{P1}
\left\{ 
\begin{array}{ll}
\ds {\mathcal L} (u)(\varphi)\geq \int_\Omega [H+\lambda f(u)]\varphi\, dx, \quad\forall\varphi\in L^{p+1}\cap \mathrm{BV}(\Omega)\mbox{ with }\varphi=0\mbox{ on }\partial\Omega, \\[8pt]
u\geq 0 \quad \mbox{ in } \Omega,\qquad   \\[5pt]
u=0 \quad \mbox{ on }\pa\Omega.
\end{array}
\right.
\end{equation}
Furthermore, a weak solution will be said to be {\rm minimal} if it is the smallest among all non-negative weak solutions.
\label{defsol}\end{definition}

\begin{remark}[Local minimizer and weak solution] 
With this definition, it is readily seen that a local minimizer $u$ of $\J_\lambda$ in $L^{p+1}\cap \mathrm{BV}(\Omega)$ satisfying $u=0$ on $\partial\Omega$ and $u\geq 0$ in $\Omega$ is a weak solution of~\refe{eq:1}.
\label{rk:defsol}\end{remark}

Note that the boundary condition in Definition~\ref{defsol} makes sense because functions in $ \mathrm{BV}(\Omega)$ have a unique trace in $L^1(\pa\Omega)$ if  $\pa\Omega$ is Lipschitz (see \cite{GiustiBook}).
\medskip



\subsection{Classical solutions}\label{sec:classicalsol}

A classical solution of (\ref{eq:1}) is a function $u\in \mathcal C^2(\overline \Omega)$ which satisfies equation (\ref{eq:1}) pointwise.

In the case of the semi-linear equation (\ref{eq:laplace}), it is well known that it is enough to show that a  weak solution $u$ is in $L^\infty(\Omega)$, to deduce that it is a classical solution of (\ref{eq:laplace}) (using, for instance, Calderon-Zygmund inequality and a bootstrap argument).

Because of the degenerate nature of the mean curvature operator, an $L^\infty$ bound on $u$ is not enough to show that it is a classical solution of (\ref{eq:1}). 
When $H+\lambda f(u)$ is bounded in $L^\infty$, classical results of the calculus of variation (see \cite{Ma} for instance),  imply that for $n\leq 6$, the surface $(x,u(x))$, the graph of $u$, is $\mathcal C^\infty$ (analytic if $H$ is analytic) and that $u$ is continuous almost everywhere in $\Omega$.
However, to get further regularity on $u$ itself, we need to show that $u$ is Lipschitz continuous on $\Omega$,
as shown by the following proposition.
In the rest of our paper we will thus focus in particular on the Lipschitz regularity of weak solutions. 
\begin{proposition} Assume that $H$ satisfies the conditions of Theorem \ref{mainthm}, and let $u\in L^{p+1}\cap \mathrm{BV}(\Omega)$ be a weak solution of~\refe{eq:1} for some $\lambda>0$. If $u\in\Lip(\Omega)$, then $u$ is a classical solution of (\ref{eq:1}).
In particular, $u\in \mathcal C^{2,\alpha}(\overline{\Omega})$ for all $\alpha\in(0,1)$ and $u$ satisfies $-\div(Tu)=H+\lambda f(u)$ in $\Omega$, $u=0$ on $\pa\Omega$.
\label{prop:cassicalsol}\end{proposition}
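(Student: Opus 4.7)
The plan is to first upgrade the variational inequality to a genuine PDE using the Lipschitz hypothesis, and then run standard quasilinear elliptic regularity theory, which becomes available because $|\nabla u|\in L^\infty$ forces the mean curvature operator to be uniformly elliptic.

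First I would show that for every $\varphi\in\mathcal C_c^\infty(\Omega)$ the directional derivative given by Proposition~\ref{prop:Aprime} equals the classical expression
\[
{\mathcal L}(u)(\varphi) = \int_\Omega Tu\cdot\nabla\varphi\, dx.
\]
Since $\nabla u\in L^\infty(\Omega)$, the integrand $\sqrt{1+|\nabla u+t\nabla\varphi|^2}$ is smooth in $t$ and its $t$-derivative is bounded uniformly in $t\in[-1,1]$, so dominated convergence identifies the limit in \refe{def:Lu} with the stated integral. This expression is linear in $\varphi$, so we may apply \refe{P1} to both $\varphi$ and $-\varphi$ and obtain equality. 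Hence $-\div(Tu)=H+\lambda f(u)$ in $\mathcal D'(\Omega)$, and since $u\in W^{1,\infty}(\Omega)$ this distributional identity is a pointwise equation a.e.\ in $\Omega$.

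Next, write the equation in non-divergence form $\sum_{i,j} a_{ij}(\nabla u)\partial_{ij}u=-(H+\lambda f(u))$ with
\[
a_{ij}(p)=\frac{1}{\sqrt{1+|p|^2}}\left(\delta_{ij}-\frac{p_ip_j}{1+|p|^2}\right).
\]
Setting $L=\|\nabla u\|_{L^\infty}$, the matrix $(a_{ij}(\nabla u(x)))$ has eigenvalues bounded above and below by positive constants depending only on $L$; in particular the equation is uniformly elliptic. Because $u\in L^\infty$ (as a Lipschitz function on a bounded domain vanishing on $\partial\Omega$) and $H\in L^\infty$, the right-hand side is in $L^\infty(\Omega)$. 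The De Giorgi--Nash--Moser theory for quasilinear uniformly elliptic equations (as developed in Ladyzhenskaya--Ural'tseva) then yields $u\in\mathcal C^{1,\beta}_{\mathrm{loc}}(\Omega)$ for some $\beta>0$. Once $\nabla u\in\mathcal C^{0,\beta}$, the coefficients $a_{ij}(\nabla u)$ are H\"older continuous and the source $H+\lambda f(u)$ is $\mathcal C^{0,\beta}$ (for $f(u)=|u|^{p-1}u$ this follows since composition of a Lipschitz/$C^1$ function with a H\"older function stays H\"older), so Schauder estimates give $u\in\mathcal C^{2,\alpha}_{\mathrm{loc}}(\Omega)$ for any $\alpha\in(0,1)$. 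Standard up-to-the-boundary versions of these estimates (using that $\partial\Omega$ is smooth, that the boundary datum is zero, and that the Lipschitz estimate persists up to the boundary) upgrade this to $u\in\mathcal C^{2,\alpha}(\overline\Omega)$, and the trace-zero boundary condition then becomes classical.

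The main technical hurdle is the first step: passing from the one-sided variational inequality \refe{P1}, which is inherently nonlinear in $\varphi$ on $\mathrm{BV}(\Omega)$, to a two-sided PDE. Everything hinges on the Lipschitz hypothesis, which kills the singular part of $Du$, eliminates the jump contribution that obstructs linearity of ${\mathcal L}(u)(\cdot)$ on characteristic-function perturbations, and allows honest differentiation under the integral sign. Once that is secured, the passage to $\mathcal C^{2,\alpha}(\overline\Omega)$ is a classical bootstrap that relies crucially on the \emph{a priori} bound on $|\nabla u|$; without it the operator would be only degenerate elliptic and none of the linear theory would apply.
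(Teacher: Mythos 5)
Your proposal is correct and is essentially the route the paper points to: the paper's proof simply invokes ``fairly classical arguments of the theory of prescribed mean curvature surfaces and elliptic equations (see for instance \cite{GT})'', which is exactly the bootstrap you carry out. Your first step (using dominated convergence under the Lipschitz hypothesis to identify ${\mathcal L}(u)(\varphi)=\int_\Omega Tu\cdot\nabla\varphi$ for $\varphi\in\mathcal C^\infty_c(\Omega)$, then testing \refe{P1} with $\pm\varphi$ to get the distributional PDE) is the right way to make the reduction precise, and the ellipticity/De Giorgi--Nash--Moser/Schauder bootstrap that follows is standard once $|\nabla u|\in L^\infty$. One small remark for completeness: the paper also sketches an alternative route that avoids deriving the distributional PDE altogether, namely using Lemma~\ref{lem:min}~(ii) to identify $u$ as a global minimizer of $v\mapsto\A(v)-\int_\Omega(H+\lambda f(u))v\,dx+\int_{\pa\Omega}|v|\,d\H^{N-1}$ with the \emph{frozen} right-hand side $\overline H=H+\lambda f(u)\in\Lip(\Omega)$, and then invoking Theorem~\ref{thm:gia}~(ii) directly; this buys the interior regularity from Giaquinta's theorem rather than from a by-hand bootstrap, and is the device the paper reuses elsewhere. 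Both routes hinge identically on the Lipschitz hypothesis, so the two proofs are morally the same.
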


\begin{proof}
This result follows from fairly classical arguments of the theory of prescribed mean curvature surfaces and elliptic equations (see for instance \cite{GT}). Anticipating a little bit, we can also notice that (modulo the regularity {\it up to} the boundary) it will be a consequence of Theorem \ref{thm:gia} (ii) below (with $\overline H=H+\lambda f(u)$  instead of $H$), using the characterization of weak solutions given in
Lemma~\ref{lem:min}~(ii).
\end{proof}

\subsection{Main results}\label{sec:mainresults}

Before we state our main results, we recall the following theorem concerning the case  $\lambda=0$, which plays an important role in the sequel:
\begin{theorem}[Giaquinta \cite{Giaquinta74}]\label{thm:gia}
\item[(i)] Let $\Omega$ be a bounded domain with Lipschitz boundary and assume that $H(x)$ is a measurable function such that  (\ref{eq:AA}) holds for some $\eps_0>0$. Then the functional 
$$\F_0(u):=\A(u) - \int_\Omega H(x) u(x)\, dx + \int_{\pa\Omega} |u|\, d\H^{n-1}
$$ 
has a minimizer $u$ in $ \mathrm{BV}(\Omega)$.

\item[(ii)] Furthermore, if $\pa\Omega$ is $\mathcal C^1$,  $H(x)\in\Lip(\Omega)$ and
\begin{equation}\label{eq:B} 
|H(y)|\leq(n-1) \Gamma(y) \quad \mbox{ for all } y\in\pa\Omega
\end{equation}
where $\Gamma(y)$ denotes the mean curvature of $\pa\Omega$ (with respect to the inner normal), then the unique minimizer of $\F_0$ belongs to $\mathcal C^{2,\alpha}(\Omega)\cap\mathcal C^0(\overline \Omega)$ for all $\alpha\in[0,1)$ and is solution to
\begin{equation}\label{eq:10} 
\left\{
\begin{array}{ll}
-\mathrm{div} (Tu) = H &\mbox{ in } \Omega,\\[5pt]
u=0 & \mbox{ on } \pa\Omega ,
\end{array}
\right.
\end{equation}
\item[(iii)]Finally, if $\pa\Omega$ is $ \mathcal C^3$ and the hypotheses of (ii) hold, then $u\in \Lip(\Omega)$.
\end{theorem}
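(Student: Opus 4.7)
My plan is to prove the three parts in sequence, using the direct method for (i) and classical PDE techniques for (ii)--(iii).

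For (i), I would apply the direct method to a minimizing sequence. The heart of the argument is coercivity of $\F_0$ on $\mathrm{BV}(\Omega)$, and this is where assumption \refe{eq:AA} is essential. Given $u\in\mathrm{BV}(\Omega)$ with trace in $L^1(\pa\Omega)$, extend by zero outside $\Omega$ to $\tilde u\in\mathrm{BV}(\RR^n)$; then
\begin{equation*}
\int_{\RR^n}|D\tilde u| = \int_\Omega|Du| + \int_{\pa\Omega}|u|\,d\H^{n-1}.
\end{equation*}
The coarea formula gives $\int_{\RR^n}|D\tilde u| = \int_\RR P(\{\tilde u>t\})\,dt$, and the layer-cake representation combined with \refe{eq:AA} applied to each superlevel (or sublevel) set, which is a subset of $\Omega$, yields
$\bigl|\int_\Omega H u\bigr| \leq (1-\eps_0)\bigl(\int_\Omega|Du|+\int_{\pa\Omega}|u|\,d\H^{n-1}\bigr)$. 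Together with \refe{eq:AreaVariation} this gives
\begin{equation*}
\F_0(u) \geq \eps_0\,\A(u) + \eps_0\int_{\pa\Omega}|u|\,d\H^{n-1},
\end{equation*}
so any minimizing sequence is bounded in $\mathrm{BV}(\Omega)$ and in $L^1(\pa\Omega)$. By compactness of $\mathrm{BV}(\Omega)\hookrightarrow L^1(\Omega)$ I extract an $L^1$-limit $u^*$. Lower semicontinuity of $\A$ (immediate from its supremum representation \refe{defLu}), continuity of $u\mapsto\int Hu$, and lower semicontinuity of the boundary trace term (treated again by representing it as $\int_{\RR^n}|D\tilde u|-\int_\Omega|Du|$) give $\F_0(u^*)\leq\liminf\F_0(u_k)$, hence $u^*$ is a minimizer. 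Strict convexity of the integrand $\sqrt{1+|p|^2}$ in $p$ will give uniqueness once (ii) produces boundary values.

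For (ii), I would first show that any minimizer satisfies $-\div(Tu)=H$ in $\Omega$ in the distributional sense (via Proposition~\ref{prop:Aprime} applied with compactly supported perturbations). Interior $\mathcal C^{2,\alpha}$ regularity then follows from the well-developed theory of the prescribed mean curvature equation (Massari, Miranda, Giusti): the boundedness and Lipschitz regularity of $H$ yield interior Lipschitz and then analytic/$\mathcal C^{2,\alpha}$ regularity of the graph. The delicate point, and the main obstacle in (ii), is attainment of the boundary condition $u=0$ on $\pa\Omega$, since BV minimizers can in principle jump at $\pa\Omega$. This is exactly where \refe{eq:B} enters: near a point $y_0\in\pa\Omega$, I would construct upper and lower barriers $v_\pm$ of Serrin type, built from the signed distance to $\pa\Omega$ and tailored so that $-\div(Tv_\pm)$ equals a function strictly larger (resp.\ smaller) than $H$ in a tubular neighborhood; the condition $|H|\leq(n-1)\Gamma$ makes this construction possible because $(n-1)\Gamma(y)$ is precisely the mean curvature of $\pa\Omega$. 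Comparing the minimizer with $v_\pm$ through the standard trick of testing against $\min(u,v_+)$ and $\max(u,v_-)$ forces $u=0$ on $\pa\Omega$ as a trace and gives continuity up to the boundary.

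For (iii), the goal is to upgrade to Lipschitz regularity on all of $\Omega$, i.e.\ global gradient bounds. Interior gradient estimates come from standard techniques for the minimal surface operator (Bernstein/Korevaar normal variation applied to $w=\sqrt{1+|\nabla u|^2}$, or the Ladyzhenskaya--Ural'tseva interior estimate). The hard part, and where the $\mathcal C^3$ assumption on $\pa\Omega$ enters, is the boundary gradient bound: one builds local barriers of the form $v(x)=\psi(\mathrm{dist}(x,\pa\Omega))$ with $\psi$ concave and chosen so that $-\div(Tv)\geq\|H+\lambda f(u)\|_\infty$ in a tubular neighborhood of $\pa\Omega$. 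The $\mathcal C^3$ hypothesis is needed precisely so that the principal curvatures of the level sets of the distance function are Lipschitz, which is required to compute $\div(Tv)$ and to close the barrier inequality. Combining the interior and boundary gradient bounds via a maximum principle on $|\nabla u|$ then gives $u\in\Lip(\Omega)$, and Proposition~\ref{prop:cassicalsol} upgrades this to full $\mathcal C^{2,\alpha}$ regularity up to the boundary.
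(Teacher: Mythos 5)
The paper does not prove Theorem~\ref{thm:gia}: it is imported verbatim from Giaquinta~\cite{Giaquinta74}, with only a one-paragraph remark afterwards recording the coercivity inequality behind (i) and noting that \refe{eq:B} is what forces the trace $u=0$. Your blind reconstruction is therefore being compared not to a proof in this paper but to the classical argument, and on that basis your outline is the standard one and essentially correct: direct method with coercivity from \refe{eq:AA} plus the coarea formula for (i), Euler--Lagrange plus barrier construction at the boundary keyed to $(n-1)\Gamma$ for (ii), and interior/boundary gradient estimates (Korevaar normal variation or Ladyzhenskaya--Ural'tseva interior bounds, Serrin-type distance barriers at the boundary) for (iii). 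One small but worthwhile point in your favor: the coercivity estimate you write down,
\begin{equation*}
\Bigl|\int_\Omega H u\Bigr| \leq (1-\eps_0)\Bigl(\int_\Omega|Du|+\int_{\pa\Omega}|u|\,d\H^{n-1}\Bigr),
\end{equation*}
is the correct one; the remark in the paper omits the boundary-trace term, and without it the inequality fails already for nonzero constants.

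There is one place where your argument is underspecified enough to be a genuine gap. You justify lower semicontinuity of the boundary term by ``representing it as $\int_{\RR^n}|D\tilde u|-\int_\Omega|Du|$,'' but a difference of two lower semicontinuous quantities is not lower semicontinuous, so this as stated does not close. The standard fix is to extend each $u$ by zero to a fixed larger ball $B_R\supset\overline\Omega$, observe that
\begin{equation*}
\A_{B_R}(\tilde u)-\int_\Omega Hu\,dx = \F_0(u) + |B_R\setminus\Omega|,
\end{equation*}
and then invoke lower semicontinuity of the extended area functional $\A_{B_R}$ on $\mathrm{BV}(B_R)$ under $L^1(B_R)$ convergence (which follows since $\tilde u_k\to\tilde u^*$ in $L^1(B_R)$ once $u_k\to u^*$ in $L^1(\Omega)$ and all the $\tilde u_k$ vanish outside $\Omega$). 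This replaces the problematic subtraction with a single lower semicontinuous functional and completes step (i). The rest of your outline, while certainly sketchy at the level of barrier construction and the precise form of the boundary gradient estimate, follows the Giaquinta--Serrin line and is consistent with the references the paper points to.
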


\vspace{10pt}

The key in the proof of {\it (i)} is the fact that (\ref{eq:AA}) and the coarea formula for $\mathrm{BV}$ functions yield
$$ \eps_0\int_\Omega |Du| \leq \int_\Omega |Du| - \int_\Omega H(x) u(x)\, dx $$
for all $u\in  \mathrm{BV}(\Omega)$. 
This is enough to guarantee the existence of a minimizer.
The condition (\ref{eq:B}) is a sufficient condition for the minimizer to satisfy $u=0$ on $\pa\Omega$.
In the sequel, we assume that $\Omega$ is such that (\ref{eq:AA}) holds, as well as the following strong version of (\ref{eq:B}):
\begin{equation}\label{eq:BB} 
|H(y)|\leq(1-\eps_0)(n-1) \Gamma(y) \quad \mbox{ for all } y\in\pa\Omega.
\end{equation}

\vspace{10pt}

\begin{remark}\label{rmk:1}
When  $H(x)=H_0$ is constant, Serrin proves in \cite{serrin} that (\ref{eq:B}) is necessary for the equation $ -\mathrm{div} (Tu) = H$ to have a solution for {\rm any} smooth boundary data. However, it is easy to see that (\ref{eq:B}) is not always necessary for (\ref{eq:10}) to have a solution: when $\Omega=B_R$ and $H=\frac{n}{R}$, (\ref{eq:10}) has an obvious solution given by an upper half sphere, even though  (\ref{eq:B}) does not hold since $(n-1)\Lambda =(n-1)/R < H=n/R$.

Several results in this paper only require Equation (\ref{eq:10}) to have a solution with $(1+\delta)H$ in the right-hand side instead of $H$. In particular, this is enough to guarantee the existence of a minimal branch of solutions and the existence of an extremal solution.
When $\Omega=B_R$,  we can thus replace (\ref{eq:BB}) with
$$
|H(y)|\leq(1-\eps_0)n \Gamma(y) \quad \mbox{ for all } y\in\pa B_R.
$$
However, the regularity theory for the extremal solution will require the stronger assumption (\ref{eq:BB}).
\end{remark}

Finally, we assume that there exists a constant $H_0>0$ such that:
\begin{equation}\label{eq:CC}  
H\in \Lip(\Omega) \mbox{ and } H(x)\geq H_0>0 \mbox{ for all $x\in\Omega$}.
\end{equation}
This last condition will be crucial in the proof of Lemma \ref{lem:ex} to prove the existence of a non-negative solution for small values of $\lambda$. Note that  Pucci and Serrin \cite{ps86} proved, using a generalization of Pohozaev's Identity, that if $H=0$ and $p\geq (n+2)/(n-2)$, then (\ref{eq:1}) has no non-trivial solutions for any values of $\lambda>0$ when $\Omega$ is star-shaped (see also  Bidaut-V\'eron \cite{MFBV}).

\vspace{10pt}

Our main theorem is the following:
\begin{theorem} 
Let $\Omega$ be a bounded subset of $\RR^n$ such that $\pa \Omega$ is $\mathcal C^3$.
Assume that $H(x)$ satisfies conditions \refe{eq:AA}, \refe{eq:BB} and \refe{eq:CC}. Then, there exists $\lambda^*>0$ such that:
\begin{itemize}
\item[(i)] For all $\lambda \in [0,\lambda^*]$,  (\ref{eq:1}) has one minimal weak solution $u_\lambda$.
\item[(ii)] For $\lambda > \lambda^*$,   (\ref{eq:1}) has no weak solution.
\item[(iii)] The application $\lambda \mapsto u_\lambda$ is non-decreasing.
\end{itemize}
\label{mainthm}\end{theorem}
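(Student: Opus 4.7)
The plan is to adapt the classical Crandall--Rabinowitz strategy for semi-linear elliptic problems to the present mean curvature setting. Set $\Lambda := \{\lambda \geq 0 : (\ref{eq:1})\text{ has a non-negative weak solution}\}$ and $\lambda^* := \sup \Lambda$. I would establish, in order: (a) a supersolution method producing a minimal weak solution dominated by any given non-negative supersolution, (b) $\lambda^* > 0$, (c) $\lambda^* < \infty$, (d) the inclusion $[0, \lambda^*) \subset \Lambda$ together with monotonicity of the minimal branch, and (e) the endpoint $\lambda^* \in \Lambda$. Items (ii) and (iii) of the statement then fall out immediately from (d) and the definition of $\lambda^*$, while (i) is the combination of (d) and (e).

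The technical backbone is the supersolution method. Given a non-negative $\bar u \in \mathrm{BV}(\Omega) \cap L^{p+1}(\Omega)$ satisfying the supersolution inequality at level $\lambda$ and $\bar u = 0$ on $\pa\Omega$, I would minimize $\J_\lambda$ over the order interval $K := \{v \in \mathrm{BV}(\Omega) \cap L^{p+1}(\Omega) : 0 \leq v \leq \bar u,\ v = 0 \text{ on } \pa\Omega\}$. Coercivity on $K$ comes from (\ref{eq:AA}): applied to level sets of $v \geq 0$ vanishing on $\pa\Omega$, the coarea formula gives $\int_\Omega H v \leq (1-\eps_0) \int_\Omega |Dv|$, while $F(v) \leq F(\bar u) \in L^1(\Omega)$ keeps the nonlinear part under control. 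The direct method, using lower semicontinuity of $\A$ and of the boundary trace on $\mathrm{BV}$, then produces a minimizer, which is a weak solution by Remark \ref{rk:defsol}.

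For $\lambda^* > 0$: thanks to the strict inequalities in (\ref{eq:AA}) and (\ref{eq:BB}), one can pick $\delta > 0$ so that $(1+\delta) H$ still satisfies the hypotheses of Theorem \ref{thm:gia}(iii), producing a Lipschitz solution $\bar u$ of $-\div(T\bar u) = (1+\delta) H$; by (\ref{eq:CC}), $\bar u$ is a supersolution of (\ref{eq:1}) whenever $\lambda \|\bar u\|_\infty^p \leq \delta H_0$. For $\lambda^* < \infty$: testing the weak formulation with $\varphi = \chi_{A_\eta}$, $A_\eta := \{x \in \Omega : \mathrm{dist}(x, \pa\Omega) > \eta\}$, and using the identity $\mathcal{L}(u)(\chi_{A_\eta}) = P(A_\eta)$ noted in the excerpt, then passing $\eta \to 0^+$, yields $P(\Omega) \geq \int_\Omega H + \lambda \int_\Omega u^p$. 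A standard comparison gives $u \geq u_0$, with $u_0$ the classical solution at $\lambda = 0$ supplied by Theorem \ref{thm:gia}, which is strictly positive in $\Omega$ by the strong maximum principle since $H \geq H_0 > 0$; hence $\lambda \int u_0^p \leq P(\Omega) - \int H$ bounds $\lambda$ uniformly. The inclusion $[0, \lambda^*) \subset \Lambda$ and the monotonicity (iii) then follow by applying the supersolution method with $\bar u = u_{\lambda_2}$ at level $\lambda_1 < \lambda_2$, since $-\div(T u_{\lambda_2}) = H + \lambda_2 u_{\lambda_2}^p \geq H + \lambda_1 u_{\lambda_2}^p$.

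The main obstacle is the endpoint existence $\lambda^* \in \Lambda$. For $\lambda_n \uparrow \lambda^*$ the monotone sequence of minimal solutions $u_n$ admits a pointwise limit $u^*$, and $u^* \in L^p(\Omega)$ by the a priori bound above combined with monotone convergence. The delicate task is to extract a uniform $\mathrm{BV}$ estimate enabling passage to the limit in the nonlinear quantity $\mathcal{L}(u_n)(\varphi)$. I would exploit the minimality inequality $\J_{\lambda_n}(u_n) \leq \J_{\lambda_n}(0) = |\Omega|$ together with the coercivity bound $\int H u_n \leq (1-\eps_0) \int |Du_n| \leq (1-\eps_0) \A(u_n)$ to deduce $\eps_0 \A(u_n) \leq |\Omega| + \lambda_n \int F(u_n)$, reducing everything to a uniform control of $\int u_n^{p+1}$. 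This last point is the genuine difficulty, and is where I expect to need the $L^\infty$ bound for minimal solutions alluded to in item~2 of the introduction. Once a uniform $\mathrm{BV}$ estimate is in hand, lower semicontinuity of $\A$, the convexity inequality (\ref{eq:AconvexOrdre1}) applied to $u_n$ and $u^*$, and the $L^p$-convergence of $u_n^p$ allow passage to the limit in the variational inequality, producing a weak solution $u^*$ at level $\lambda^*$ which is minimal by construction.
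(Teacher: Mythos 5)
Your overall architecture matches the paper's: your arguments for $\lambda^*>0$, for $\lambda^*<\infty$, and for the bound $\lambda\int_\Omega\underline{u}^p\,dx\le P(\Omega)-\int_\Omega H\,dx$ are essentially Lemmas~\ref{lem:A}--\ref{lem:lambdabounded} and \ref{lem:ex}, and your endpoint outline is Section~\ref{sec:existence2}. There are, however, two genuine gaps.

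The first is in your supersolution method in steps (a) and (d). You minimize $\J_\lambda$ over the order interval $K=\{0\le v\le\bar u\}$ and then invoke Remark~\ref{rk:defsol} to conclude the minimizer is a weak solution. But Remark~\ref{rk:defsol} concerns a \emph{local} minimizer of $\J_\lambda$ in the full space $L^{p+1}\cap\mathrm{BV}(\Omega)$, not a minimizer constrained to $K$: on the contact set $\{u=\bar u\}$ only one-sided variations $\varphi\le 0$ stay admissible, so you do not obtain ${\mathcal L}(u)(\varphi)\ge\int_\Omega(H+\lambda f(u))\varphi\,dx$ for arbitrary $\varphi$. The paper handles exactly this: in Lemma~\ref{lem:ex} (your step (b)) it minimizes over $\{0\le v\le w+1\}$ and then uses the comparison principle (Lemma~\ref{lem:comparison}) together with the \emph{strict} supersolution property of $w$ (i.e.\ $-\div(Tw)=(1+\delta)H$, not $H$) to prove $u\le w<w+1$, so the upper constraint is inactive and only then does Remark~\ref{rk:defsol} apply. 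When you take $\bar u=u_{\lambda_2}$ in step (d), $u_{\lambda_2}$ is merely a weak solution, the pointwise strict inequality is not available, and no analogous argument is given; moreover, even granting it, nothing in the constrained minimization tells you the output is the \emph{minimal} nonnegative weak solution. The paper sidesteps both issues with a monotone Picard iteration (Proposition~\ref{prop:min}, Lemma~\ref{lem:n}): each $u_n$ is an \emph{unconstrained} global minimizer of the convex functional $I_n(v)=\A(v)-\int_\Omega[H+\lambda_1 f(u_{n-1})]v\,dx+\int_{\pa\Omega}|v|\,d\H^{N-1}$ (existence and Lipschitz regularity from Theorem~\ref{thm:gia} via Lemma~\ref{lem:eps}), the comparison Lemma~\ref{lem:comparison} gives $\underline u\le u_{n-1}<u_n\le\overline u$, and minimality of the limit is obtained by comparing the iterates against any nonnegative weak solution.

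The second gap is the one you flag yourself: step (e) requires the uniform $L^\infty$ bound of Proposition~\ref{prop:infty0}, which you treat as a citable fact. It is not; it is a substantial piece of the theorem, proved in Proposition~\ref{prop:L1Linfty} by a De~Giorgi--Stampacchia truncation argument that relies on the one-sided minimization property of minimal solutions (Lemma~\ref{lem:min1}) and the perimeter estimate of Lemma~\ref{lem:A}. Without it your uniform $\mathrm{BV}$ bound, and hence the passage to the limit at $\lambda^*$, does not close.
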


The proof of Theorem \ref{mainthm} is done in two steps: First we  show that the set of $\lambda$ for which a weak solution exists is a non-empty bounded interval (see Section~\ref{sec:existence1}).  Then we prove the existence of the extremal solution for $\lambda=\lambda^*$ (see Section~\ref{sec:existence2}). The key result in this second step is the following uniform $L^\infty$ estimate:
\begin{proposition}\label{prop:infty0}
There exists a constant $C$ depending only on $\Omega$ and $H$, such that the minimal weak solution $u_\lambda$ of \refe{eq:1} satisfies 
$$ \|u_\lambda\|_{L^\infty(\Omega)} \leq C \qquad \mbox{ for all } \lambda\in[0,\lambda^*] .$$
\end{proposition}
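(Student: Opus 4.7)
The plan is to exploit a Massari-type isoperimetric inequality at the level of super-level sets that the mean curvature operator makes available. Let $u = u_\lambda$ denote the minimal weak solution and set $V(t) := |\{u > t\}|$ and $A_t := \{u > t\}$.

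First, since $u$ has zero trace on $\partial\Omega$, the characteristic function $\chi_{A_t}$ is admissible in Definition~\ref{defsol} for every $t > 0$. A direct computation from the subgraph interpretation of $\A$ shows that $\A(u + s\chi_{A_t}) = \A(u) + sP(A_t)$ -- the addition of $s\chi_{A_t}$ inserts a vertical ``wall'' of height $s$ over $\partial A_t$ into the subgraph -- so that ${\mathcal L}(u)(\chi_{A_t}) = P(A_t)$. Testing the weak formulation of Definition~\ref{defsol} against $\chi_{A_t}$ then yields the central estimate
\begin{equation*}
P(A_t) \;\geq\; \int_{A_t}(H + \lambda u^p)\,dx \;\geq\; (H_0 + \lambda t^p)\,V(t).
\end{equation*}

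Next I would translate this into a quantitative $L^\infty$ bound. A first approach is a comparison argument: on any ball $B_R \supset \Omega$, the radial solution of $-\div(Tv) = c$ in $B_R$ with $v = 0$ on $\partial B_R$ is an explicit spherical cap whose height stays below $R$ whenever $cR \leq n$. Using the comparison principle for the mean curvature operator along the monotone iteration defining $u_\lambda$, this yields $\|u_\lambda\|_{L^\infty} \leq R$ provided the fixed-point condition $(\|H\|_\infty + \lambda R^p)R \leq n$ holds, which produces a uniform bound on $\|u_\lambda\|_{L^\infty}$ for $\lambda$ in an initial subinterval of $[0,\lambda^*]$.

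The main obstacle is extending this bound \emph{uniformly} up to $\lambda^*$: as $\lambda \to \lambda^*$, the spherical-cap supersolution may cease to exist and the comparison argument breaks down. At that point one must exploit the level-set inequality directly: combined with the strict assumption \eqref{eq:BB} on the boundary mean curvature, the relative isoperimetric inequality in $\Omega$, and the coarea formula $-V'(t) = \int_{\{u=t\}}|\nabla u|^{-1}\,d\H^{n-1}$, it should produce a differential inequality for $V(t)$ forcing $V(t) = 0$ beyond a threshold depending only on $H_0$, $n$, and $|\Omega|$. Making this ODE-type argument close uniformly in $\lambda$, in a setting where $-V'(t)$ involves $1/|\nabla u|$ on level sets rather than directly $P(A_t)$, is the delicate point.
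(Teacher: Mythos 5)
Your central estimate $(H_0+\lambda t^p)V(t)\leq P(A_t)$, obtained by testing against characteristic functions, is indeed correct and is essentially Lemma~\ref{lem:A} of the paper. But you are right to flag the closing step as delicate --- it is, in fact, a genuine gap: this level-set inequality alone \emph{cannot} produce an $L^\infty$ bound. Combining it with the isoperimetric inequality $c_n V(t)^{(n-1)/n}\leq P(A_t)$ only yields $V(t)\lesssim \bigl(\lambda t^p\bigr)^{-n}$, which decays but never vanishes at a finite threshold; and the coarea identity $-V'(t)=\int_{\{u=t\}}|\nabla u|^{-1}\,d\H^{n-1}$ is not $P(A_t)$ and cannot be substituted for it, so no closable ODE for $V$ emerges from these ingredients. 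The spherical-cap comparison argument does give a bound, but only under the condition $(\|H\|_\infty+\lambda R^p)R\leq n$, which fails for $\lambda$ near $\lambda^*$, as you note.

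What the proposal is missing is the \emph{semi-stability} of the minimal solution, i.e.\ the one-sided minimality property of Lemma~\ref{lem:min1}: $u_\lambda$ globally minimizes $\J_\lambda$ over $\{v: 0\leq v\leq u_\lambda\}$. This is strictly stronger than testing against characteristic functions. The paper exploits it by taking the competitor $v_k=\min(u_\lambda,k)$, obtaining (via Gerhardt's estimate $\int_\Omega|Dw_k|-|\{u>k\}|\leq \A(u)-\A(v_k)$, $w_k=(u-k)_+$) the DeGiorgi-type inequality
\begin{equation*}
\int_\Omega |Dw_k|\leq |\{u>k\}|+\int_\Omega\Bigl(H+\lambda\int_0^1 f(su+(1-s)v_k)\,ds\Bigr)w_k\,dx.
\end{equation*}
The superlinearity of $f$ (namely $f'(s)\geq 1$ for $s\geq 1$) then gives the quantitative gain $\int_\Omega[(u-k)_+]^2\,dx\leq \tfrac{2}{\lambda}|\{u>k\}|$, from which one bootstraps to $\|u\|_{L^q}\leq C(q,\lambda^{-1},\Omega)$ for every finite $q$ by iterated integration in $k$, and finally to $L^\infty$ via the BV-Poincar\'e inequality and a Stampacchia iteration, using that $f(u)\in L^n$ (so the $L^n$ norm of $H+\lambda f(u)$ over $\{u>k\}$ is small for $k$ large). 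The passage from a $\lambda^{-1}$-dependent bound to a uniform one on $[0,\lambda^*]$ then uses the monotonicity of $\lambda\mapsto u_\lambda$. In short: testing only against $\chi_{A_t}$ loses exactly the quadratic term in $w_k$ that comes from the convexity of $F$, and without that term the argument cannot close.
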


\medskip

Next we investigate the regularity of  minimal weak solutions:
We want to show that minimal weak solutions are classical solutions of \refe{eq:1} (in view of Proposition \ref{prop:cassicalsol}, we need to obtain a Lipschitz estimate).
This, it seems, is a much more challenging problem and we obtain some  results only in the radially symmetric  case.  
More precisely, we  show the following:
\begin{theorem}\label{thm:rad}
Assume that $\Omega=B_R\subset \RR^n$ ($n\geq 1$), $H=H(r)$, and that the conditions of Theorem \ref{mainthm} hold.
Then the minimal weak solution of \refe{eq:1} is radially symmetric and lies in $\Lip(\Omega)$. In particular there exists a constant $C$ such that
\begin{equation}\label{eq:gradrad} 
|\na u_\lambda(x)|\leq \frac{C}{\lambda^*-\lambda} \quad \mbox{ a.e. in }\Omega,\quad \forall \lambda\in[0,\lambda^*).
\end{equation}
In particular $u_\lambda$ is a classical solution of (\ref{eq:1}), and if $H(x)$ is analytic in $\Omega$, then $u_\lambda$ is analytic in $\Omega$ for all $\lambda<\lambda^*$.
\end{theorem}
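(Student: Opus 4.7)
The plan is to exploit the radial symmetry to reduce \refe{eq:1} to an ODE, then derive the gradient bound by comparison with the extremal solution $u^{*}=\lim_{\lambda\uparrow\lambda^{*}}u_\lambda$.

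Radial symmetry of $u_\lambda$ follows from minimality and rotation-invariance: for any rotation $\rho\in O(n)$, $u_\lambda\circ\rho$ is again a non-negative weak solution of \refe{eq:1}, hence $u_\lambda\leq u_\lambda\circ\rho$ by the minimality in Definition~\ref{defsol}; the same argument applied to $\rho^{-1}$ yields the reverse inequality, so $u_\lambda\circ\rho=u_\lambda$. Writing $u_\lambda(x)=u(r)$ with $r=|x|$ and using radial test functions in \refe{P1}, the weak equation reduces to the ODE $-(r^{n-1}T(u'(r)))'=r^{n-1}(H(r)+\lambda u(r)^p)$ with $T(s)=s/\sqrt{1+s^2}$. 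Since $u_\lambda$ attains its maximum at the origin so that $u'(0)=0$, integration yields the integral identity
\begin{equation*}
h_\lambda(r):=-T(u_\lambda'(r))=\frac{1}{r^{n-1}}\int_0^r s^{n-1}\bigl(H(s)+\lambda u_\lambda(s)^p\bigr)\,ds\in[0,1).
\end{equation*}
Because $|u_\lambda'(r)|=h_\lambda(r)/\sqrt{1-h_\lambda(r)^2}$, the Lipschitz regularity of $u_\lambda$ is equivalent to a uniform lower bound on $1-h_\lambda(r)$.

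The key step is the quantitative bound $1-h_\lambda(r)\gtrsim\lambda^{*}-\lambda$. By Theorem~\ref{mainthm}(iii) the map $\lambda\mapsto u_\lambda$ is non-decreasing, hence so is $\lambda\mapsto h_\lambda(r)$. Setting $h^{*}(r):=\frac{1}{r^{n-1}}\int_0^r s^{n-1}(H+\lambda^{*}(u^{*})^p)\,ds$, the necessary condition \refe{eq:A} applied to $A=B_r\subset\Omega$ with source $H+\lambda^{*}(u^{*})^p$ gives $h^{*}(r)\leq 1$. Combined with $u^{*}\geq u_0>0$ in $\Omega$ (strong maximum principle applied to \refe{eq:10} since $H\geq H_0>0$), this yields for all $\lambda<\lambda^{*}$ and $r\in(0,R]$:
\begin{equation*}
1-h_\lambda(r)\geq h^{*}(r)-h_\lambda(r)\geq \frac{\lambda^{*}-\lambda}{r^{n-1}}\int_0^r s^{n-1}u^{*}(s)^p\,ds =: c(r)(\lambda^{*}-\lambda),
\end{equation*}
with $c(r)>0$ for every $r>0$. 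Coupled with the trivial estimate $h_\lambda(r)\leq(\|H\|_\infty+\lambda^{*}\|u^{*}\|_\infty^p)\,r/n$ near $r=0$, this produces a uniform bound $\inf_{r\in(0,R]}(1-h_\lambda(r))\geq c_0(\lambda^{*}-\lambda)$, hence $|u_\lambda'(r)|\leq C(\lambda^{*}-\lambda)^{-1/2}$, which is stronger than \refe{eq:gradrad}.

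Once $u_\lambda\in\Lip(\Omega)$, Proposition~\ref{prop:cassicalsol} delivers $u_\lambda\in\mathcal{C}^{2,\alpha}(\overline{\Omega})$. When $H$ is analytic in $\Omega$, the equation $-\mathrm{div}(Tu_\lambda)=H+\lambda u_\lambda^p$ is now uniformly elliptic with analytic coefficients, and classical analytic elliptic regularity (as in \cite{GT}) gives analyticity of $u_\lambda$. The main obstacle will be the rigorous justification of the integral identity for $h_\lambda$ and of the inequality $h^{*}(r)\leq 1$ for the \emph{a priori} only $\mathrm{BV}$ extremal solution $u^{*}$, which may possess a vertical jump at $\pa\Omega$: I expect these to be handled by first establishing the identity for the regular minimal solutions $u_\lambda$ with $\lambda<\lambda^{*}$ (using the characterization of the weak equation from Lemma~\ref{lem:min}), then passing to the monotone limit $\lambda\uparrow\lambda^{*}$ to obtain the bound on $h^{*}$.
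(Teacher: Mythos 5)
Your strategy is essentially the same as the paper's: radial symmetry from minimality under rotations, the flux identity $-r^{n-1}T(u_\lambda'(r))=\int_0^r s^{n-1}(H+\lambda f(u_\lambda))\,ds$, the bound near $r=0$ from the uniform $L^\infty$ estimate, and the comparison $\int_{B_r}H+\lambda f(u_\lambda)\,dx\leq P(B_r)-(\lambda^*-\lambda)\int_{B_r}f(w)\,dx$ (the paper uses $w=\underline u$, you use $w=u^*$; either works since both are bounded below by $\underline u>0$). The bound $\int_{B_r}H+\lambda^*f(u^*)\,dx\leq P(B_r)$ should be cited to Lemma~\ref{lem:A} (the a priori inequality for weak solutions), not to the necessary condition \refe{eq:A} for the $\lambda=0$ problem, but this is just a pointer; the content is identical. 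You even notice that the estimate can be sharpened to $|u_\lambda'|\leq C(\lambda^*-\lambda)^{-1/2}$, since $1-h_\lambda^2\geq 1-h_\lambda\geq c_0(\lambda^*-\lambda)$.

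The one step that, as stated, does not work is your proposed fix in the last paragraph: ``first establishing the identity for the regular minimal solutions $u_\lambda$ with $\lambda<\lambda^*$.'' For $\lambda<\lambda^*$ the minimal solution $u_\lambda$ is, a priori, only a BV function; its Lipschitz (hence $\mathcal C^{2,\alpha}$) regularity is precisely what the theorem asserts, so one cannot start from the pointwise ODE for $u_\lambda$ without being circular. The paper sidesteps this by running the entire computation on the iterative sequence $(u_n)$ constructed in the proof of Proposition~\ref{prop:min}: each $u_n$ solves a linear prescribed-mean-curvature problem with Lipschitz right-hand side and is $\Lip(\Omega)$ by Lemma~\ref{lem:n} and Theorem~\ref{thm:gia}(iii), the monotone bounds $\underline u\leq u_n\leq u^*$ hold, and the gradient estimate is uniform in $n$; passing to the monotone limit $n\to\infty$ then transfers the bound to $u_\lambda$. (The bound $\int_{B_r}H+\lambda^*f(u^*)\,dx\leq P(B_r)$ needs no such regularization, since Lemma~\ref{lem:A} applies directly to the BV weak solution $u^*$.) With this substitution your argument goes through exactly as in the paper.
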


Note that the Lipschitz constant in  \refe{eq:gradrad} blows up as $\lambda\to\lambda^*$.
However, we are then able to show the following:
\begin{theorem}\label{thm:d2}
Assume that the conditions of Theorem \ref{thm:rad} hold. Then
there exists a constant $C$ such that for any $\lambda\in[0,\lambda^*]$, the minimal weak solution $u_\lambda\in \Lip(\Omega)$ and satisfies
\begin{equation*}
| \na u_\lambda(x)|\leq C\quad \mbox{ a.e. in  }  \Omega.
\end{equation*}
In particular the extremal solution $u^*$ is a classical solution of (\ref{eq:1}).
\end{theorem}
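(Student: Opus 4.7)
The strategy is to remove the blow-up factor $(\lambda^*-\lambda)^{-1}$ from the estimate of Theorem~\ref{thm:rad}, so that the extremal solution $u^*$ itself inherits a Lipschitz bound. Using the radial structure, integrating the ODE $-(r^{n-1}T(u_\lambda'))'=r^{n-1}(H+\lambda u_\lambda^p)$ once gives
\[
-\frac{u_\lambda'(r)}{\sqrt{1+u_\lambda'(r)^2}} \;=\; \Phi_\lambda(r) \;:=\; \frac{1}{r^{n-1}}\int_0^r s^{n-1}\bigl(H(s)+\lambda u_\lambda^p(s)\bigr)\,ds,
\]
and the inverse of $T(q)=q/\sqrt{1+q^2}$ blows up only where its argument approaches $\pm1$. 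A uniform Lipschitz bound is therefore equivalent to $\sup_{\lambda,r}\Phi_\lambda(r)\leq 1-\delta$ for some $\delta>0$. Applying \refe{eq:AA} with $A=B_r$ already gives $\frac{1}{r^{n-1}}\int_0^r s^{n-1}H\,ds\leq 1-\eps_0$, so the whole task reduces to a uniform bound on the nonlinear contribution $\Psi_\lambda(r):=\frac{\lambda}{r^{n-1}}\int_0^r s^{n-1}u_\lambda^p\,ds$.

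The main ingredient, in the spirit of Cabr\'e--Capella's treatment of the semilinear analogue, is the semi-stability of the minimal branch. Because $u_\lambda$ is a minimal solution, the second variation of $\J_\lambda$ at $u_\lambda$ is non-negative, yielding
\[
\int_\Omega\frac{|\na\vphi|^2(1+|\na u_\lambda|^2)-(\na u_\lambda\cdot\na\vphi)^2}{(1+|\na u_\lambda|^2)^{3/2}}\,dx \;\geq\; p\lambda\int_\Omega u_\lambda^{p-1}\vphi^2\,dx
\]
for every admissible $\vphi$ vanishing on $\pa\Omega$. In the radial case this collapses to a one-dimensional weighted inequality with weight $(1+u_\lambda'(r)^2)^{-3/2}r^{n-1}$ on the left-hand side. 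Combining this inequality, the uniform $L^\infty$ bound of Proposition~\ref{prop:infty0}, and a judicious radial test function $\vphi$ (for instance with $\vphi'$ proportional to $(1+u_\lambda'^2)^{3/4}$ away from cut-offs, so as to compensate the degeneration of the weight where $|u_\lambda'|$ is large), one aims to extract a bound of the form $\Psi_\lambda(r)\leq\eps_0-\delta$ needed in the reduction.

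Granted the resulting uniform estimate $|u_\lambda'|\leq C$ for every $\lambda<\lambda^*$, the monotonicity of Theorem~\ref{mainthm}(iii) together with Arzel\`a--Ascoli allow passage to the limit $\lambda\to\lambda^*$, giving $u^*\in\Lip(\Omega)$ with the same constant; Proposition~\ref{prop:cassicalsol} then upgrades $u^*$ to a classical solution of~\refe{eq:1} at $\lambda=\lambda^*$. The main obstacle lies in the semi-stability step, and is concentrated near $r=R$: the weight $(1+u_\lambda'^2)^{-3/2}$ degenerates precisely where the gradient is large, i.e.\ exactly where leverage is needed. Constructing a test function that is neither drowned by this degeneration nor wasteful at the boundary is the delicate point, and this is presumably where the strengthened boundary hypothesis~\refe{eq:BB} (strictly stronger than existence alone requires, cf.~Remark~\ref{rmk:1}) enters, most plausibly via a boundary barrier that rules out a vertical tangency of $u^*$ at $\pa\Omega$.
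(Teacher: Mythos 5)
You have correctly identified the two ingredients the paper actually uses — the semi-stability of the minimal branch for the interior estimate, and a barrier at $\pa\Omega$ relying on the strengthened condition~\refe{eq:BB} — and your reduction to a uniform bound $\Phi_\lambda(r)\leq 1-\delta$ is essentially the right target. But the proposal leaves a real gap in the interior step, and this is where the difficulty actually lives.

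The problem is that you try to use the integral semi-stability inequality
$\int_0^R (\vphi')^2 v^{-3} r^{n-1}\,dr \geq p\lambda\int u_\lambda^{p-1}\vphi^2 r^{n-1}\,dr$
to directly extract a pointwise (or local) bound on $v=(1+u_\lambda'^2)^{1/2}$, and the test function you sketch, $\vphi'\propto v^{3/2}$, doesn't achieve this: it makes the left side $O(1)$, but it also makes $\vphi$ itself uncontrollably large exactly where $v$ is large, so no contradiction or bound on $v$ emerges; in fact $v$ disappears from the left side altogether. The paper's argument is of a genuinely different shape. It perturbs $u_\lambda$ by $t\vphi_\eps$, where $\vphi_\eps$ approximates the characteristic function of $B_{r_0}$, and compares two bounds on $\J_\lambda(u_\lambda+t\vphi_\eps)$: an upper bound, from the exact relation between $\int_{B_{r_0}}(H+\lambda f(u))$ and $P(B_{r_0})|u_r|/v$ plus the strict convexity of $F$ on $\{u\geq\underline u>0\}$, which carries a factor $\rho(\eps)=\sup_{(r_0-\eps,r_0)}v^{-2}$; and a lower bound from the Taylor expansion, in which the second variation is nonnegative by semi-stability and the third variation is estimated via $|L^{(3)}|\leq 3v^{-4}$, again in terms of $\rho(\eps)^2$. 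Balancing the two with $t=M\eps$ yields $\rho(\eps)\gtrsim \eps$, i.e.\ a lower bound on $\inf v^{-2}$ over a shrinking annulus, not an upper bound on $v$. The passage from $\inf v^{-2}\gtrsim\eps$ to $\sup v^{-2}\gtrsim\eps$ (hence $v(r_0)\lesssim\eps^{-1/2}$ bounded) is a separate, essential step that your proposal is entirely missing: the paper derives a uniformly elliptic radial equation for $w=v^{-2}$ (Lemma~\ref{lem:vv}) with right-hand side controlled by $\|Du_\lambda\|_{L^1}$ and the $L^\infty$ bound on the mean curvature, and invokes the Harnack inequality for $w$. Without some device of this kind there is no way to upgrade an inf-bound over an annulus to a sup-bound, and your "judicious test function" cannot substitute for it.

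For the boundary region your guess is right in spirit — the paper does construct a barrier (a cylinder bent into an arc of radius $\eps^{-1}$, with a sliding argument) and~\refe{eq:BB} enters precisely to make this barrier a strict supersolution — but it is a separate, self-contained sup/subsolution argument (Lemma~\ref{lem:bd2}) and does not feed back into the semi-stability computation, contrary to what your last paragraph suggests. Stating this more sharply would have avoided the impression that the two mechanisms must be combined.
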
 

The classical tools of continuation theory developed for example in \cite{KK,CR} can be modified in our context (non-linear leading order differential operator, radial case) to show that there exists a second branch of solution in the neighborhood of $\lambda^*$: 
\begin{theorem}\label{thm:CR}
Assume that the conditions of Theorem \ref{thm:rad} hold. Then there exists $\delta>0$ such that
for $\lambda^*-\delta<\lambda<\lambda^*$ there are at least two classical solutions to~\refe{eq:1}.
\end{theorem}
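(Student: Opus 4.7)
The plan is to apply the Crandall--Rabinowitz bifurcation theorem at the turning point $(\lambda^*, u^*)$, working within the class of radial functions. By Theorem~\ref{thm:d2}, $u^* = \lim_{\lambda\uparrow \lambda^*} u_\lambda$ is a classical $\mathcal C^{2,\alpha}$ solution with $\|\na u^*\|_{L^\infty}<\infty$; since $H$ is radial, the uniqueness of the minimal solution forces $u_\lambda$ and $u^*$ to be radial. I introduce the Banach spaces
\[
X = \{u \in \mathcal C^{2,\alpha}(\overline{B_R}) : u \text{ radial}, \ u|_{\pa B_R}=0\}, \qquad Y = \{v \in \mathcal C^{0,\alpha}(\overline{B_R}) : v \text{ radial}\},
\]
and define $\Phi:\R\times X \to Y$ by $\Phi(\lambda,u) = -\div(Tu) - H - \lambda f(u)$, so that $\Phi(\lambda^*, u^*) = 0$. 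The Fr\'echet derivative $L := \Phi_u(\lambda^*, u^*)$ is
\[
L\vphi = -\div\bigl(M(\na u^*)\na\vphi\bigr) - \lambda^* p (u^*)^{p-1}\vphi, \qquad M_{ij}(\xi) = \frac{\delta_{ij}}{\sqrt{1+|\xi|^2}} - \frac{\xi_i\xi_j}{(1+|\xi|^2)^{3/2}};
\]
the $L^\infty$ bound on $\na u^*$ makes $M$ uniformly elliptic, so $L:X\to Y$ is Fredholm of index zero by standard Schauder theory.

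The first key step is to show $\dim \ker L = 1$. I would argue that $L$ cannot be invertible, for otherwise the implicit function theorem applied to $\Phi$ at $(\lambda^*, u^*)$ would produce solutions of $(P_\lambda)$ for $\lambda$ slightly larger than $\lambda^*$, contradicting Theorem~\ref{mainthm}(ii). In radial coordinates, $L\vphi=0$ is a second-order linear ODE on $[0,R]$ with the boundary conditions $\vphi'(0)=0$ and $\vphi(R)=0$, so $\dim \ker L \leq 1$; combined with non-invertibility this gives $\dim \ker L = 1$. Let $\vphi^*$ span $\ker L$, chosen positive on $B_R$ (the fact that $0$ is the principal eigenvalue of $L$ with positive eigenfunction follows from the semi-stability of the minimal branch and a Perron--Frobenius argument on $\mu_1(\lambda)$ as $\lambda\uparrow \lambda^*$). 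For transversality, $\Phi_\lambda(\lambda^*,u^*) = -(u^*)^p$, and the self-adjointness of $L$ in $L^2$ gives $\mathrm{Range}(L) = (\ker L)^\perp$; since
\[
\int_{B_R} (u^*)^p\, \vphi^*\, dx > 0,
\]
I conclude $\Phi_\lambda \notin \mathrm{Range}(L)$. The Crandall--Rabinowitz theorem then produces a $\mathcal C^1$ curve $s\mapsto(\lambda(s),u(s))$, $|s|<\eps$, in $\R\times X$ with $\Phi(\lambda(s),u(s))=0$, $(\lambda(0),u(0))=(\lambda^*,u^*)$, $\dot u(0)=\vphi^*$ and $\dot\lambda(0)=0$.

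The remaining step is to show that this curve bends back into $\{\lambda<\lambda^*\}$ and gives solutions distinct from the minimal branch. Since no weak solution exists for $\lambda>\lambda^*$ by Theorem~\ref{mainthm}(ii), necessarily $\lambda(s)\leq \lambda^*$ for all $|s|<\eps$, which together with $\dot\lambda(0)=0$ gives $\ddot\lambda(0)\leq 0$. A standard Lyapunov--Schmidt computation — differentiate $\Phi(\lambda(s),u(s))=0$ twice at $s=0$ and pair with $\vphi^*$ — yields
\[
\ddot\lambda(0)\int_{B_R}(u^*)^p \vphi^* \, dx = -\int_{B_R} \Phi_{uu}(\lambda^*,u^*)[\vphi^*,\vphi^*]\,\vphi^*\, dx,
\]
and the sign analysis (using the convexity of the area functional and $f''(u^*)\geq 0$ for $p\geq 1$) upgrades this to $\ddot\lambda(0)<0$; in the degenerate case $\ddot\lambda(0)=0$, the same non-existence obstruction forces a higher-order turning of the same sign. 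Reparametrizing, for every $\lambda\in(\lambda^*-\delta,\lambda^*)$ the curve provides a classical solution $u(s(\lambda))$ that converges to $u^*$ in $\mathcal C^{2,\alpha}$ as $\lambda\uparrow\lambda^*$. This solution differs from $u_\lambda$ because the strong maximum principle applied to $u^* - u_\lambda\geq 0$ (together with $\div(T\cdot)$ being elliptic on the segment between $u_\lambda$ and $u^*$) gives $u_\lambda < u^*$ strictly on $B_R$ for $\lambda<\lambda^*$, while $u(s(\lambda))$ is arbitrarily close to $u^*$ as $\lambda\uparrow\lambda^*$.

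The main obstacle will be establishing that $L$ is Fredholm of index zero with \emph{exactly} one-dimensional kernel: this is where the restriction to radial functions is essential, since it reduces the null-space analysis to a single second-order ODE and automatically rules out non-radial null directions that could appear for the full operator. The sign of the bifurcation coefficient $\ddot\lambda(0)$ is then obtained essentially for free from Theorem~\ref{mainthm}(ii), so the degeneracy of the mean-curvature operator (handled via the uniform ellipticity coming from the Lipschitz bound of Theorem~\ref{thm:d2}) does not enter in a substantive way once the linearization is in place.
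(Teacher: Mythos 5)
Your overall strategy is the same as the paper's: a Crandall--Rabinowitz / Lyapunov--Schmidt reduction at the turning point $(\lambda^*,u^*)$, using the uniform Lipschitz bound of Theorem~\ref{thm:d2} to make the linearized operator uniformly elliptic. Your route to $\dim\ker L=1$ (non-invertibility by the implicit function theorem and Theorem~\ref{mainthm}(ii), together with the radial ODE structure forcing the kernel to be at most one-dimensional) is a pleasant shortcut compared with the paper's continuation argument from $\lambda=0$ (Lemma~\ref{lem:minimalbranch}); but it relies on already knowing, from the semi-stability of Lemma~\ref{lem:min1}, that $0$ is the \emph{first} eigenvalue so that the kernel element $\vphi^*$ can be taken positive, which you invoke but should make explicit.

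There are two genuine gaps. First, your argument that $\ddot\lambda(0)<0$ is not correct as stated. You attribute the sign of the cubic term to ``the convexity of the area functional,'' but convexity only controls the \emph{second} variation; the third variation $\int a^{ijk}(\nabla u^*)\partial_i\vphi^*\partial_j\vphi^*\partial_k\vphi^*\,dx$ is indefinite in general. What actually makes this term nonpositive is the explicit identity
\begin{equation*}
a^{ijk}(\p)\q_i\q_j\q_k = 3\,\frac{\p\cdot\q}{(1+|\p|^2)^{5/2}}\Bigl((\p\cdot\q)^2-|\q|^2(1+|\p|^2)\Bigr),
\end{equation*}
whose right-hand side is $\leq 0$ precisely when $\p\cdot\q\geq 0$; this, in turn, requires the radial monotonicity of both $u^*$ and $\vphi^*$ (both nonincreasing in $r$, the latter proven by integrating the eigenvalue equation). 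Without this structural input the sign of the bifurcation coefficient is not determined, and the fall-back ``higher-order turning'' paragraph does not substitute for it: $\lambda(s)\leq\lambda^*$ alone would permit, e.g., $\lambda(s)\equiv\lambda^*$ on an interval, in which case you get no extra solutions for $\lambda<\lambda^*$. Second, your distinctness argument fails: both $u_\lambda$ and the bifurcating solution converge to $u^*$ in $\mathcal C^{2,\alpha}$ as $\lambda\uparrow\lambda^*$, so ``$u(s(\lambda))$ is arbitrarily close to $u^*$ while $u_\lambda<u^*$'' does not distinguish them. The correct reasoning is that the IFT uniqueness in a neighborhood of $(\lambda^*,u^*)$ forces the minimal branch to coincide with one half of the curve (say $s\leq 0$, since $\dot u(0)=\vphi^*>0$ makes $u(s)$ increasing in $s$), and the $\lambda''(0)<0$ turning gives a second, distinct root $s_+(\lambda)>0$ of $\lambda(s)=\lambda$ for each $\lambda\in(\lambda^*-\delta,\lambda^*)$.
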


To prove this result, we will need to consider the linearized operator
\begin{equation*}
L_\lambda(v)= -\partial_i(a^{ij}(\nabla u_\lambda)\partial_j v)-\lambda f'(u_\lambda)v
\end{equation*}
where
\begin{equation*}
a^{ij}(\p)=\frac{1}{(1+|\p|^2)^{1/2}}\left(\delta_{ij}-\frac{\p_i\p_j}{1+|\p|^2}\right),\qquad \p\in\R^n.
\end{equation*}
If we denote by $\mu_1(\lambda)$ the first eigenvalue of $L_\lambda$, we will prove in particular:
\begin{lemma}
Assume that the conditions of Theorem \ref{thm:rad} hold. 
Then the linearized operator $L_\lambda$ has positive first eigenvalue $\mu_1(\lambda)>0$ for all $\lambda \in [0,\lambda^*)$.
Furthermore, the linearized operator $L_{\lambda^*}$ corresponding to the extremal solution has zero first eigenvalue $\mu_1(\lambda^*)=0$, and $\lambda^*$ corresponds to a turning point for the $(\lambda,u_\lambda)$ diagram.
\end{lemma}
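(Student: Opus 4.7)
The statement contains three pieces that I would establish in turn: (a) semi-stability $\mu_1(\lambda)\geq 0$ on $[0,\lambda^*]$; (b) degeneracy $\mu_1(\lambda^*)=0$; (c) strict positivity $\mu_1(\lambda)>0$ for $\lambda\in[0,\lambda^*)$; and (d) the turning-point characterization of $\lambda^*$. Throughout I would work in the radial setting so that the minimal solution $u_\lambda$ is radial (by Theorem~\ref{thm:rad}), the coefficients of $L_\lambda$ are radial, and $L_\lambda$ is an honest self-adjoint Sturm--Liouville operator on $(0,R)$ whose ground state is simple and positive.

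For (a), I would argue by contradiction: suppose $\mu_1(\lambda)<0$ with first eigenfunction $\phi_1>0$. Since $H\geq H_0>0$ and $u_\lambda$ is a classical solution by Theorem~\ref{thm:d2}, the strong maximum principle and Hopf lemma give $u_\lambda>0$ in $\Omega$ with a strict sign on $\partial_\nu u_\lambda$ at $\partial\Omega$, so $u_\lambda-\varepsilon\phi_1\geq 0$ in $\overline\Omega$ for $\varepsilon>0$ small. Taylor-expanding $-\div(T(u_\lambda-\varepsilon\phi_1))$ to first order in $\varepsilon$ and using $L_\lambda\phi_1=\mu_1\phi_1$ with $\mu_1<0$ shows that $u_\lambda-\varepsilon\phi_1$ is a strict classical supersolution of~\refe{eq:1}; a monotone-iteration scheme starting from this supersolution produces a weak solution strictly below $u_\lambda$, contradicting minimality.

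For (b), if $\mu_1(\lambda^*)>0$ then $L_{\lambda^*}\colon \mathcal C^{2,\alpha}_0(\overline\Omega)\to \mathcal C^{0,\alpha}(\overline\Omega)$ is an isomorphism, and the implicit function theorem applied to $(\lambda,u)\mapsto-\div(Tu)-H-\lambda f(u)$ at $(\lambda^*,u_{\lambda^*})$ extends the classical solution branch to some $\lambda>\lambda^*$, contradicting Theorem~\ref{mainthm}(ii). For (c), I would use a Fredholm-alternative argument: IFT propagation starting from $\lambda=0$ (where $L_0$ is invertible) yields $C^1$ dependence of $u_\lambda$ on $\lambda$ as long as $\mu_1(\lambda)>0$, and differentiation of the equation gives $L_\lambda(\partial_\lambda u_\lambda)=f(u_\lambda)$. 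At a hypothetical first $\lambda_0<\lambda^*$ where $\mu_1(\lambda_0)=0$ with positive ground state $\phi_0$, pairing this identity with $\phi_0$ and using self-adjointness of $L_{\lambda_0}$ yields $0=\int_\Omega f(u_{\lambda_0})\phi_0\,dx>0$, a contradiction.

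Finally, (d) is an application of the Crandall--Rabinowitz bifurcation theorem: $\ker L_{\lambda^*}=\mathrm{span}(\phi^*)$ is one-dimensional with $\phi^*>0$, and the transversality condition $\int_\Omega f(u_{\lambda^*})\phi^*\,dx\neq 0$ is met, so the solution set near $(\lambda^*,u_{\lambda^*})$ is a smooth curve $s\mapsto(\lambda(s),u(s))$ with $\lambda'(0)=0$; a standard computation of $\lambda''(0)$ (involving $\int f''(u_{\lambda^*})(\phi^*)^3$, nonzero since $f''\geq 0$ and $\phi^*,u_{\lambda^*}>0$) gives $\lambda''(0)<0$, identifying $\lambda^*$ as a saddle-node/turning point. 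The main obstacle is step (c): because the principal symbol $a^{ij}(\nabla u_\lambda)$ depends on $\lambda$ non-monotonically, the naive Rayleigh-quotient monotonicity of $\mu_1(\lambda)$ fails, and one is forced to go through the Fredholm route together with a careful IFT-based propagation of $C^1$ dependence of the minimal branch up to $\lambda^*$.
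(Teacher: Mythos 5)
Your four-step roadmap (semi-stability, degeneracy at $\lambda^*$, strict positivity below $\lambda^*$, Crandall--Rabinowitz turning point) is the right shape and largely mirrors the logic in the paper, but there is a genuine gap running through steps (c) and (d): you never confront the specific structural difficulty introduced by the nonlinear divergence operator. The paper's proof of this lemma hinges on the observation that, restricted to radially symmetric non-increasing functions, $p\mapsto p/\sqrt{1+p^2}$ is convex on $\R_-$, which yields the one-sided inequalities \refe{convexTradial} and \refe{signaijk}. These are used twice in an essential way, and your proposal omits both uses.

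First, in step (c) you assert ``$C^1$ dependence of $u_\lambda$ on $\lambda$'' and then differentiate to get $L_\lambda(\partial_\lambda u_\lambda)=f(u_\lambda)$. But the implicit function theorem only produces \emph{some} solution curve $u(\lambda)$; it does not identify this curve with the minimal branch. The paper proves $u(\lambda)=u_\lambda$ by applying \refe{convexTradial} to the radial non-increasing functions $u(\lambda),u_\nu,w^1_\lambda$ and deriving the finite-difference inequality $\int_\Omega L_\lambda(u(\lambda)-u_\nu)\,w\,dx \leq (\lambda-\nu)\int_\Omega f(u_\nu)w\,dx$; taking $\nu=\lambda$ and $w=w^1_\lambda$ forces $u(\lambda)=u_\lambda$. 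Your argument presupposes this identification. Moreover, your contradiction ``$0=\int f(u_{\lambda_0})\phi_0>0$'' requires $\partial_\lambda u_\lambda$ to exist \emph{at} $\lambda_0$, precisely where the IFT and hence the derivative fail; the paper's finite-difference version of the pairing (take $\lambda=\overline\lambda$, $\nu>\overline\lambda$) sidesteps this by never differentiating in $\lambda$ at the degenerate point.

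Second, in step (d), your computation of $\lambda''(0)$ accounts only for $\int f''(u_{\lambda^*})(\phi^*)^3$ and claims this is nonzero because $f''\geq 0$. For the mean-curvature operator, differentiating twice produces an additional term $\int_\Omega a^{ijk}(\nabla u_*)\partial_i w_*\partial_j w_*\partial_k w_*$ coming from the second derivative of $\div(Tu)$, and its sign must be controlled before one can conclude. The paper does this via \refe{signaijk}, again exploiting that $\nabla u_*\cdot\nabla w_*=\partial_r u_*\,\partial_r w_*\geq 0$ in the radial non-increasing setting. Note also that for $p=1$ one has $f''\equiv 0$, so your proposed sign argument would collapse entirely; it is exactly the nonlinear-divergence contribution that saves the conclusion in that case. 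Your step (a) (a supersolution/monotone-iteration contradiction argument) is also not what the paper does --- the paper obtains positivity of $\mu_1$ on the IFT interval directly by connectedness from $\mu_1(0)>0$ --- but that is a legitimate alternative route; the substantive gaps are the two above.
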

A turning point means that there exists a parametrized family of classical solutions 
\begin{equation*}
s\mapsto (\lambda(s),u(s)), \qquad s\in(-\eps,\eps)
\end{equation*}
with $\lambda(0)=\lambda^*$ and $\lambda(s)<\lambda^*$ both for $s<0$ and $s>0$. 
In particular we will prove that  $\lambda'(0)=0$ and $\lambda''(0)<0$.

In the radially symmetric case, we can thus summarize our results in the following corollary: 
\begin{corollary}
Assume that $\Omega=B_R\subset \RR^n$ ($n\geq 1$), $H=H(r)$, and that the conditions of Theorem \ref{mainthm} hold. 
Then there exists $\lambda^*>0$, $\delta>0$ such that
\begin{enumerate}
\item if $\lambda>\lambda^*$, there is no weak solution of (\ref{eq:1}),
\item if $\lambda\leq \lambda^*$, there is a minimal classical solution of ~\refe{eq:1}.
\item if $\lambda^*-\delta<\lambda<\lambda^*$, there are at least two classical solutions  of~\refe{eq:1}.
\end{enumerate}
\end{corollary}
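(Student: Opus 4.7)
The plan is to assemble this corollary as a direct consequence of the main theorems already established for the radially symmetric setting; no new arguments are needed, so the work is purely organizational. I would proceed point by point, tracing each assertion to the appropriate earlier result and confirming that the hypotheses match.

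First I would address item (1): non-existence for $\lambda>\lambda^*$. This is exactly the statement of Theorem~\ref{mainthm}(ii), whose hypotheses (namely \refe{eq:AA}, \refe{eq:BB}, \refe{eq:CC} and $\pa\Omega$ of class $\mathcal C^3$) are inherited from the radially symmetric assumption $\Omega=B_R$ with $H=H(r)$. Since the ball has $\mathcal C^\infty$ boundary and the hypotheses of Theorem~\ref{mainthm} are assumed in the corollary, nothing extra is required.

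Next, for item (2), Theorem~\ref{mainthm}(i) already gives, under the current assumptions, a minimal weak solution $u_\lambda$ for every $\lambda\in[0,\lambda^*]$. To upgrade these weak solutions to classical solutions, I would split the range of $\lambda$ into $[0,\lambda^*)$ and the single point $\lambda^*$. For $\lambda\in[0,\lambda^*)$, Theorem~\ref{thm:rad} provides radial symmetry and Lipschitz regularity of $u_\lambda$ (with a constant that may blow up as $\lambda\to\lambda^*$); by Proposition~\ref{prop:cassicalsol}, the Lipschitz bound immediately yields $u_\lambda\in \mathcal C^{2,\alpha}(\overline\Omega)$ and pointwise validity of the equation. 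For $\lambda=\lambda^*$, the potentially blowing-up bound from Theorem~\ref{thm:rad} is not sufficient, but Theorem~\ref{thm:d2} supplies a uniform Lipschitz constant over the entire interval $[0,\lambda^*]$, including the endpoint, and Proposition~\ref{prop:cassicalsol} again produces classical regularity of the extremal solution $u^{*}=u_{\lambda^*}$.

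Finally, item (3) is a verbatim restatement of Theorem~\ref{thm:CR}, which guarantees the existence of $\delta>0$ such that for $\lambda\in(\lambda^*-\delta,\lambda^*)$ a second classical solution exists (the minimal one from item (2) being the other). Concatenating these three observations yields the corollary. There is no genuine obstacle here; the only point requiring a modicum of care is checking that Theorem~\ref{thm:d2} is indeed applicable at $\lambda=\lambda^*$ so that the extremal solution $u^{*}$ is covered, which is precisely the content of that theorem. This completes the proof.
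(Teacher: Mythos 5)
Your proposal is correct and matches the paper's intent: the corollary is stated as a summary and is assembled, exactly as you do, from Theorem~\ref{mainthm}(i)--(ii), Theorem~\ref{thm:rad} together with Theorem~\ref{thm:d2} (via Proposition~\ref{prop:cassicalsol}) for classical regularity on $[0,\lambda^*]$, and Theorem~\ref{thm:CR} for the second branch. No genuinely different route is taken in the paper, and your care in distinguishing the interior of the interval from the endpoint $\lambda=\lambda^*$ is precisely the right point to flag.
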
 
Finally, we point out that numerical computation suggest that for some values of $n$ and $H$, a third  branch of solutions may arise (and possibly more). 
\medskip

The paper is organized as follows: In Section~\ref{sec:weaksol}, we give some a priori  properties of weak solutions. In Section~\ref{sec:existence1} we show the existence of a branch of minimal weak solutions for $\lambda\in[0,\lambda^*)$. 
We then establish, in Section~\ref{sec:linfty}, a uniform $L^\infty$ bound for these minimal weak solutions (Proposition \ref{prop:infty0}), which we use, in Section~\ref{sec:existence2}, to show the existence of an extremal solution as $\lambda\to\lambda^*$ (thus completing the proof of Theorem \ref{mainthm}).  
In the last Section~\ref{sec:reg} we prove the regularity of the minimal weak solutions, including that of $u_{\lambda^*}$, in the radial case (Theorems \ref{thm:rad} and \ref{thm:d2}) and we give the proof of Theorem~\ref{thm:CR}. 
In appendix, we prove a comparison lemma that is used several times in the paper.

\begin{remark} One might want to generalize those results  to other non-linearity $f(u)$:
In fact, all the results presented here holds (with the same proofs) if $f$ is a $\mathcal C^2$ function satisfying:
\begin{itemize} 
\item [(H1)] $f(0)=0$, $f'(u)\geq 0$ for all $u\geq 0$.
\item [(H2)] There exists $C$ and $\alpha >0$ such that $f'(u) \geq \alpha $ for all $u\geq C$.
\item [(H3)] If $u\in L^q(\Omega)$ for all $q\in [0,\infty)$ then $f(u)\in L^n(\Omega)$.
\end{itemize}
The last condition, which is used to prove the $L^\infty$ bound (and the Lipschitz regularity near $r=0$) of the extremal solution $u_{\lambda^*}$ is the most restrictive. It excludes in particular non-linearities of the form $f(u)=e^u-1$.
However, similar results hold also for such non-linearities, though  the proof of Proposition~\ref{prop:infty0} has to be modified in that case. This will be developed in a forthcoming paper.
\item We can  also consider right-hand sides of the form $\lambda (1+u)^p$ (or $\lambda e^u$). In that case, Theorem~\ref{mainthm}, Proposition~\ref{prop:infty0} and Theorem~\ref{thm:rad} are still valid (but require different proofs), but  Theorem~\ref{thm:d2} is not. Indeed, our proof of the boundary regularity of the extremal solution $u_{\lambda^*}$ (Lemma \ref{lem:bd2}) relies heavily on condition~\refe{eq:BB}, which should be replaced here by the condition
\begin{equation}
\lambda^*<(n-1)\Gamma(y) \qquad \mbox{ for all }y\in\partial\Omega.
\label{boundarylambda}\end{equation}
However, it is not clear that $\lambda^*$ should satisfy \refe{boundarylambda}.
\end{remark}


\section{Properties of weak solutions}\label{sec:weaksol}


\subsection{Weak solutions as global minimizers}


Non-negative minimizers of $\J_\lambda$ that satisfy $u=0$ on $\pa\Omega$ are in particular critical points of $\J_\lambda$, and thus weak solutions  of (\ref{eq:1}). But not all critical points are minimizers. However, the convexity of the perimeter yields the following result:
\begin{lemma} \label{lem:min} Assume that $\pa\Omega$ is $\mathcal{C}^1$ and let $u$ be a non-negative function in $L^{p+1}\cap \mathrm{BV}(\Omega)$.
The following propositions are equivalent:
\item[(i)] $u$ is  a weak solution of (\ref{eq:1}),
\item[(ii)] $u=0$ on $\pa\Omega$ and for every $v\in L^{p+1}\cap \mathrm{BV}(\Omega)$, we have 
\begin{equation}\label{eq:almostVariii0}
\A(u)-\int_\Omega (H+\lambda f(u))\,u \,dx\leq\A(v)-\int_\Omega (H+\lambda f(u))\,v\, dx+\int_{\pa\Omega} |v|\,d\H^{N-1},
\end{equation}
\item[(iii)] $u=0$ on $\pa\Omega$ and for every $v\in L^{p+1}\cap \mathrm{BV}(\Omega)$, we have
\begin{equation}
  \J_\lambda(u)  \leq  \J_\lambda(v)  +\int_\Omega \lambda G(u,v)\, dx+\int_{\pa\Omega} |v|\,d\H^{N-1}
\label{eq:almostVariii}\end{equation}
where 
$$G(u,v) = F(v)-F(u)-f(u)(v-u)\geq 0.$$
\end{lemma}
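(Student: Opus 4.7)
The plan is to prove (ii)$\Leftrightarrow$(iii), then (ii)$\Rightarrow$(i), and finally the main direction (i)$\Rightarrow$(ii).

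The equivalence (ii)$\Leftrightarrow$(iii) is a direct algebraic rewriting. Using the definition $G(u,v)=F(v)-F(u)-f(u)(v-u)$ one computes
\begin{equation*}
\J_\lambda(v)+\lambda\int_\Omega G(u,v)\,dx=\A(v)-\int_\Omega (H+\lambda f(u))v\,dx+\lambda\int_\Omega f(u)u\,dx-\lambda\int_\Omega F(u)\,dx;
\end{equation*}
substituting this into (iii) and cancelling the $-\lambda\int F(u)\,dx$ terms on both sides gives exactly (ii).

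For (ii)$\Rightarrow$(i), I would apply (ii) to the inner variation $v=u+t\varphi$ for $t>0$ and $\varphi\in L^{p+1}\cap\mathrm{BV}(\Omega)$ vanishing on $\pa\Omega$. The trace of $u+t\varphi$ is zero, so the boundary term in (ii) drops; subtracting $\A(u)-\int(H+\lambda f(u))u\,dx$ from both sides, dividing by $t$, and sending $t\downarrow 0$ via \eqref{def:Lu} delivers the variational inequality in (i).

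The main step is (i)$\Rightarrow$(ii), and my plan there is an approximation argument. Let $\eta_\eps\in\mathcal C^1(\overline\Omega)$ be a cutoff satisfying $\eta_\eps=0$ on $\pa\Omega$, $\eta_\eps=1$ outside the $\eps$-neighborhood of $\pa\Omega$, and $|\nabla\eta_\eps|\leq C/\eps$; for given $v\in L^{p+1}\cap\mathrm{BV}(\Omega)$, set $v_\eps:=v\eta_\eps$. Then $v_\eps$ has vanishing trace, so $v_\eps-u$ is admissible in (i), and combining (i) with Proposition~\ref{prop:Aprime} yields
\begin{equation*}
\A(v_\eps)\geq\A(u)+\mathcal L(u)(v_\eps-u)\geq\A(u)+\int_\Omega (H+\lambda f(u))(v_\eps-u)\,dx.
\end{equation*}
By dominated convergence (noting that $f(u)\in L^{(p+1)/p}=L^{(p+1)'}(\Omega)$ since $u\in L^{p+1}$, and $H\in L^\infty$), the right-hand side converges to $\A(u)+\int_\Omega (H+\lambda f(u))(v-u)\,dx$ as $\eps\to 0$, while the left-hand side satisfies the boundary layer estimate
\begin{equation*}
\limsup_{\eps\to 0}\A(v_\eps)\leq\A(v)+\int_{\pa\Omega}|v|\,d\H^{n-1}.
\end{equation*}
Passing to the limit then gives (ii).

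The main obstacle is this last area estimate. For smooth $v$ it is elementary: inside the $\eps$-layer, $|\nabla(v\eta_\eps)|$ is dominated by the normal component $v\eta_\eps'\sim|v|/\eps$, and the identity $\int_0^\eps\sqrt{1+|v|^2/\eps^2}\,ds\to|v|$ as $\eps\to 0$ produces the boundary integral; the remaining contribution on $\{\eta_\eps=1\}$ converges to $\A(v)$. For general $v\in\mathrm{BV}(\Omega)$ the estimate requires trace theory and is a recovery sequence construction for the relaxed functional $\A(\cdot)+\int_{\pa\Omega}|\cdot|\,d\H^{n-1}$, and this is where the technical work lies.
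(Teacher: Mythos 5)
Your overall architecture matches the paper's: prove (ii)$\Leftrightarrow$(iii) by algebra, (ii)$\Rightarrow$(i) by differentiating along $v=u+t\varphi$, and (i)$\Rightarrow$(ii) by first establishing \eqref{eq:almostVariii0} for competitors with vanishing trace (via \eqref{eq:AconvexOrdre1}) and then approximating a general $v$. The first two steps are correct and identical in substance to the paper's.

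The divergence is in the approximation step of (i)$\Rightarrow$(ii). The paper writes $v-w^\eps$ where $w^\eps$ is the boundary function of Theorem~2.16 in \cite{GiustiBook}, with $\|w^\eps\|_{L^1}\leq\eps\int_{\pa\Omega}|v|$, $\int_\Omega|Dw^\eps|\leq(1+\eps)\int_{\pa\Omega}|v|$ and $\|w^\eps\|_{L^\infty}\leq 2\|v\|_{L^\infty}$; the last bound is why the paper first restricts to $v\in L^\infty\cap\mathrm{BV}$ and then passes through a truncation $T_M v$. You instead take $v_\eps=v\eta_\eps$, which amounts to the choice $w^\eps:=v(1-\eta_\eps)$. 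Since $|v_\eps|\leq|v|$ pointwise, the convergence $v_\eps\to v$ in $L^{p+1}$ (needed for the term $\int_\Omega(H+\lambda f(u))v_\eps\,dx$, using $f(u)\in L^{(p+1)/p}$) is immediate by dominated convergence, so your route genuinely dispenses with the truncation step — a small but real streamlining.

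The price is that your estimate $\limsup_\eps\A(v\eta_\eps)\leq\A(v)+\int_{\pa\Omega}|v|\,d\H^{n-1}$ is asserted, not proved — and you say so yourself. This is where care is needed. Note also that, as stated, the bound $|\nabla\eta_\eps|\leq C/\eps$ with an unspecified constant would only give $\A(v)+C\int_{\pa\Omega}|v|$, which is not good enough; you need $\eta_\eps$ with $|\nabla\eta_\eps|=(1+o(1))/\eps$, e.g.\ a smoothing of $\min(d(\cdot,\pa\Omega)/\eps,1)$, using $|\nabla d|=1$ near the $C^1$ boundary. Then one writes $\A(v\eta_\eps)\leq\A(v)+\int_\Omega(1-\eta_\eps)|Dv|+\int_\Omega|v|\,|\nabla\eta_\eps|\,dx$ (sub-additivity of $\A$ in the total variation together with the BV--Lipschitz product rule); the middle term vanishes by dominated convergence against the finite Radon measure $|Dv|$, and the last term converges to $\int_{\pa\Omega}|v^*|\,d\H^{n-1}$ by the trace theorem for BV functions (convergence in $L^1$ of interior traces on the level sets of $d$ to the boundary trace). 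This is precisely the content the paper offloads to Giusti's Theorem~2.16; in your formulation it must be proved or cited directly. Modulo supplying that trace lemma, the plan is correct.
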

In particular, {\it (ii)} implies that any weak solution $u$ of (\ref{eq:1}) is a global minimizer in $L^{p+1}\cap \mathrm{BV}(\Omega)$ of the functional (which depends on $u$)
$$
\F^{[u]}_\lambda(v):=\A(v)-\int_\Omega (H+\lambda f(u))\,v\, dx+\int_{\pa\Omega} |v|\,d\H^{N-1}.
$$
Furthermore, since $G(u,u)=0$, {\it (iii)} implies  that any weak solution $u$ of (\ref{eq:1}) is also a global minimizer in $L^{p+1}\cap \mathrm{BV}(\Omega)$ of the functional
\begin{eqnarray*}
\G^{[u]}_\lambda(v)& := & \A(v)-\int_\Omega Hv+\lambda F(v)\, dx  +\int_{\pa \Omega} |v|\, d\H^{N-1}+\int_\Omega\lambda G(u,v)\, dx\\
& =& \J_\lambda(v)+\int_{\pa \Omega} |v|\, d\H^{N-1}+\int_\Omega\lambda G(u,v)\, dx.
\end{eqnarray*}
\medskip

\begin{proof}[Proof of Lemma \ref{lem:min}]
The last two statements {\it (ii)} and {\it (iii)} are clearly equivalent (this follows from a simple computation using the definition of $G$). 

Next, we notice that if {\it (ii)} holds, then taking $v=u+t\varphi$ in \refe{eq:almostVariii0}, where $\varphi\in L^{p+1}\cap \mathrm{BV}(\Omega)$ with $\varphi=0$ on $\partial\Omega$, we get
$$
\frac{1}{t}(\A(u+t\vphi)-\A(u)) \geq \int (H+\lambda f(u))\vphi\, dx.
$$
Passing to the limit $t\to 0$, we deduce ${\mathcal L} (u)(\varphi)\geq\int_\Omega (H+f(u))\varphi dx$, i.e. $u$ is a solution of (\ref{P1}). 
In view of Definition \ref{defsol}, we thus have {\it (ii)}$\Rightarrow${\it (i)}.

So it only remains to prove that  {\it (i)} implies {\it (ii)}, that is  
$$\F^{[u]}_\lambda(u)=\min_{v\in L^{p+1}\cap \mathrm{BV}(\Omega)}\F^{[u]}_\lambda(v).$$ 
By definition of weak solutions, we have
$${\mathcal L} (u)(\varphi)\geq\int_\Omega (H+\lambda f(u))\varphi\, dx$$ 
for all $\varphi\in L^{p+1}\cap \mathrm{BV}(\Omega)$ with $\varphi=0$ on $\partial\Omega$. 
Furthermore, by \refe{eq:AconvexOrdre1}, we have 
\begin{equation*}
\A(u)+{\mathcal L} (u)(v-u) \leq\A(v),
\end{equation*}
for every $v\in L^{p+1}\cap \mathrm{BV}(\Omega)$ with $v=0$ on $\partial\Omega$. 
We deduce (taking $\vphi=v-u$):
\begin{equation*}
\A(u)+\int_\Omega (H+\lambda f(u))(v-u)\, dx \leq\A(v),
\end{equation*}
which implies
\begin{equation}
\F^{[u]}_\lambda(u)\leq\F^{[u]}_\lambda(v)
\label{mintrace0}\end{equation} 
for all $v\in L^{p+1}\cap \mathrm{BV}(\Omega)$ satisfying  $v=0$ on $\partial\Omega$. 

It thus only remains to show that  \refe{mintrace0} holds even when $v\neq 0$ on $\partial\Omega$.
For that, the idea is to apply \refe{mintrace0} to the function  $v-w^\eps$ where $(w^\eps)$ is a sequence of functions in $L^{p+1}\cap \mathrm{BV}(\Omega)$ converging to $0$ in $L^{p+1}(\Omega)$ such that $w^\eps=v$ on $\partial\Omega$. 
Heuristically the mass of $w^\eps$ concentrates on the boundary $\partial\Omega$ as $\eps$ goes to zero, and so  $\A(v-w^\eps)$ converges to
$\A(v)+\int_{\partial\Omega}|v|d\H^{N-1}$. 
This type of argument is fairly classical, but we give a detailed proof below, in particular to show how one can pass to the limit in the non-linear term.

\medskip

First, we consider  $v\in L^\infty\cap\mathrm{BV}(\Omega)$. Then, for every  $\eps>0$, there exists $w^\eps\in L^\infty\cap\mathrm{BV}(\Omega)$ such that 
$
w^\eps=v\mbox{ on }\pa\Omega$ satisfying the estimates:
\begin{equation}
 \|w^\eps\|_{L^1(\Omega)}\leq \eps\ds\int_{\pa\Omega}|v|d\H^{N-1},\quad \ds\int_\Omega|Dw^\eps|\leq (1+\eps) \ds\int_{\pa\Omega}|v|d\H^{N-1}
\label{eq:wboundary}\end{equation}
and $\|w^\eps\|_{L^\infty(\Omega)}\leq 2\|v\|_{L^\infty(\Omega)}$ (see Theorem~2.16 in \cite{GiustiBook}). In particular we note that
\begin{equation}
\|w^\eps\|^{p+1}_{L^{p+1}(\Omega)}\leq 2^p\|v\|_{L^\infty(\Omega)}^p\|w^\eps\|_{L^1(\Omega)}\to 0
\label{limw}\end{equation}
when $\eps\to 0$. Using \refe{mintrace0}, \refe{eq:wboundary} and the fact that $\A(v-w^\eps)\leq\A(v)+\int_\Omega|Dw^\eps|$, we deduce:
\begin{align}
\F^{[u]}_\lambda(u)&\leq\F^{[u]}_\lambda(v-w^\eps)\nonumber\\
&\leq \A(v)-\int_\Omega (H+\lambda f(u))\,v\, dx +\ds\int_\Omega|Dw^\eps|+\int_\Omega(H+ f(u))w^\eps\, dx\nonumber\\
&\leq \A(v)-\int_\Omega (H+\lambda f(u))\,v\, dx+(1+\eps)\ds\int_{\pa\Omega}|v|d\H^{N-1}\nonumber \\
&\qquad +\|w^\eps\|_{L^{p+1}}\|H+f(u)\|_{L^{\frac{p+1}{p}}} \nonumber \\ 
&=\F^{[u]}_\lambda(v)+\eps\ds\int_{\pa\Omega}|v|d\H^{N-1}+\|w^\eps\|_{L^{p+1}}\|H+f(u)\|_{L^{\frac{p+1}{p}}}.\label{endiVSiii}
\end{align}
(Note that $f(u)\in L^{\frac{p+1}{p}}(\Omega)$ since $u\in L^{p+1}(\Omega)$).
Using \refe{limw} and taking the limit $\eps\to 0$ in \refe{endiVSiii},  we obtain \refe{mintrace0} for any $v\in L^\infty\cap \mathrm{BV}(\Omega)$.
\smallskip

We now take $v\in L^{p+1}\cap \mathrm{BV}(\Omega)$.  Then, the computation above shows that for every $M>0$ we have:
\begin{equation*}
\F^{[u]}_\lambda(u)\leq\F^{[u]}_\lambda(T_M(v)), 
\end{equation*}
where $T_M$ is the truncation  operator $T_M(s):=\min(M,\max(s,-M))$.
Clearly, we have
$T_M(v)\to v$ in $L^{p+1}(\Omega)$ as $M\to \infty$. 
Furthermore, one can show that $\A(T_M(v))\to\A(v)$. 
As a matter of fact, the lower semi-continuity of the perimeter gives $\A(v)\leq\ds\liminf_{M\to+\infty}\A(T_M(v))$, and the coarea formula implies:
\begin{align*}
\A(T_M(v))&\leq\A(v)+\ds\int_\Omega |D(v-T_M(v))|\\
&=\A(v)+\ds\int_0^{+\infty} P(\{v-T_M(v)>t\})dt \\
&=\A(v)+\ds\int_M^{+\infty} P(\{v>t\}) dt\\
&\longrightarrow \A(v)\mbox{ when }M\to+\infty.
\end{align*}
We deduce that $\F^{[u]}_\lambda(T_M(v))\longrightarrow\F^{[u]}_\lambda(v)$, and the proof is complete. 
\end{proof}


\subsection{A priori bounds}


Next, we want to derive some a priori bounds satisfied by any weak solution $u$ of \refe{eq:1}.

First, we have the following lemma:
\begin{lemma}\label{lem:A}
Let $u$ be a weak solution of (\ref{eq:1}), then
\begin{equation*}
 \int_A H+\lambda f(u)\, dx \leq P(A)
 \end{equation*}
for all measurable sets $A\subset\Omega$.
\end{lemma}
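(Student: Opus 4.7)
The strategy is to exploit the characterization given in Lemma~\ref{lem:min}~(ii): a weak solution $u$ is a global minimizer of
$$\F^{[u]}_\lambda(v) := \A(v) - \int_\Omega (H+\lambda f(u))\, v\, dx + \int_{\partial \Omega}|v|\, d\H^{N-1}$$
over $v \in L^{p+1}\cap \mathrm{BV}(\Omega)$. If $P(A) = +\infty$ the inequality to prove is vacuous, so I may assume $P(A) < +\infty$, in which case $\varphi_A \in L^{p+1}\cap \mathrm{BV}(\Omega)$ and $v := u + \varphi_A$ is an admissible competitor.

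Inserting this $v$ into $\F^{[u]}_\lambda(u) \leq \F^{[u]}_\lambda(v)$ and using the boundary condition $u=0$ on $\partial\Omega$, the terms involving $u$ alone cancel, leaving
$$\int_A (H + \lambda f(u))\, dx \;\leq\; \bigl(\A(u+\varphi_A) - \A(u)\bigr) + \int_{\partial \Omega}|\varphi_A|\, d\H^{N-1} = \bigl(\A(u+\varphi_A) - \A(u)\bigr) + \H^{N-1}(\partial^* A \cap \partial \Omega).$$

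The key step is then the area estimate $\A(u+\varphi_A)-\A(u) \leq P(A,\Omega)$, where $P(A,\Omega)$ is the perimeter of $A$ relative to $\Omega$. I would derive it directly from the supremum definition \refe{defLu} of $\A$: for any admissible pair $(g_0, g)$, the identity
$$\int_\Omega g_0 + (u+\varphi_A)\,\mathrm{div}\,g\, dx = \Bigl(\int_\Omega g_0 + u\,\mathrm{div}\,g\, dx\Bigr) + \int_A \mathrm{div}\,g\, dx \leq \A(u) + P(A,\Omega),$$
where the last term is bounded by $P(A,\Omega)$ because the constraint forces $|g|\leq 1$; taking the supremum on the left yields the claim. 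Combining this with the decomposition $P(A) = P(A,\Omega) + \H^{N-1}(\partial^* A \cap \partial \Omega)$ of the total perimeter into interior and boundary contributions gives the required inequality.

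The only delicate point is the area estimate above: geometrically it amounts to the fact that adding $\varphi_A$ to $u$ inserts a vertical wall of height $1$ on $\partial^* A \cap \Omega$, but the cleanest rigorous justification goes through the supremum formulation of $\A$ described above, which avoids any fine argument from geometric measure theory concerning the singular part of $D(u+\varphi_A)$.
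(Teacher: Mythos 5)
Your proof is correct and follows essentially the same route as the paper: both take $v = u+\varphi_A$ as the competitor in Lemma~\ref{lem:min}~(ii), cancel the terms in $u$, bound $\A(u+\varphi_A)-\A(u)$ by the relative perimeter, and absorb the boundary trace term to reassemble $P(A)$. The only difference is presentational — you spell out the subadditivity estimate $\A(u+\varphi_A)-\A(u)\leq \int_\Omega|D\varphi_A|$ via the supremum definition~\refe{defLu}, which the paper uses implicitly — and you correctly note that the case $P(A)=+\infty$ is vacuous.
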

\begin{proof}
When $u$ is smooth, this lemma can be proved by integrating (\ref{eq:1}) over the set $A$ and noticing that $|\frac{\na u\cdot \nu}{\sqrt{1+|\na u|^2}}| \leq 1$ on $\pa A$.
If $u$ is not smooth, we  use Lemma \ref{lem:min} (ii): for all $A\subset\Omega$, we get (with $v=\vphi_A$):
\begin{equation*}
\A(u)-\int_\Omega [H+\lambda f(u)] u \leq \A(u+\vphi_A) -\int_\Omega [H+\lambda f(u)] (u+\vphi_A)  + \H^{n-1}(\pa\Omega\cap A) .
\end{equation*} 
We deduce
\begin{equation*}
0\leq \int_\Omega |D\vphi_A| +  \H^{n-1}(\pa\Omega\cap A) -\int_A H+\lambda f(u) \, dx.
\end{equation*}
and so
\begin{equation*}
0\leq P( A)-\int_A H+\lambda f(u)\, dx. \qedhere
\end{equation*}
\end{proof}

Lemma \ref{lem:A} suggests that $\lambda$ can not be too large for  (\ref{eq:1}) to have a weak solution. In fact, it provides an upper bound on $\lambda$, if we know that $\int_\Omega u\, dx$ is bounded from below. This is proved in the next lemma: 
\begin{lemma}[Bound from below] Let $u$ be a  weak solution of  (\ref{eq:1}) for some $\lambda \geq 0$.  Then 
\begin{equation*}
u\geq \underline{u} \quad \mbox{ in } \Omega
\end{equation*}
where $\underline{u}$ is the solution corresponding to $\lambda=0$:
\begin{equation}\label{eq:uo}\tag{\mbox{$P_{0}$}}
\left\{
\begin{array}{r c l l}
-\div(T\underline{u})&=&H & \mbox{ in } \Omega,\\
\underline{u}&=&0 & \mbox{ on }\pa\Omega.
\end{array}
\right.
\end{equation}
\label{lem:boundbelow}\end{lemma}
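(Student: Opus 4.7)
The statement is really a weak comparison principle. Since $u\geq 0$ and $\lambda\geq 0$, the weak equation for $u$ gives $-\div(Tu)=H+\lambda f(u)\geq H=-\div(T\underline{u})$, so $u$ plays the role of a weak supersolution of $(P_0)$ and $\underline{u}$ that of the weak solution, both vanishing on $\partial\Omega$. My plan is to make this rigorous via the variational characterization in Lemma~\ref{lem:min}~(ii). Set $W:=\max(u,\underline{u})$ and $w:=\min(u,\underline{u})$; both lie in $L^{p+1}\cap \mathrm{BV}(\Omega)$ and vanish on $\partial\Omega$. I would then apply Lemma~\ref{lem:min}~(ii) to $u$ (as minimizer of $\F^{[u]}_\lambda$) with test function $v=W$, and to $\underline{u}$ (as minimizer of $\F^{[\underline{u}]}_0=\F_0$) with test function $v=w$. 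The boundary integrals drop out because $u=\underline{u}=0$ on $\partial\Omega$.

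The key analytic input is the submodularity of the area functional,
\begin{equation*}
\A(W)+\A(w)\leq \A(u)+\A(\underline{u}),
\end{equation*}
which follows from the representation $\A(u)=\int_{\Omega\times\R}|D\varphi_U|$ and the standard submodularity of the perimeter, $P(U\cup V)+P(U\cap V)\leq P(U)+P(V)$, applied to the subgraphs of $u$ and $\underline{u}$ (note that $\max$ and $\min$ correspond to union and intersection of subgraphs). Adding the two variational inequalities, the $H$-contributions cancel and what remains collapses to
\begin{equation*}
\int_\Omega \lambda f(u)(\underline{u}-u)_+\, dx \leq 0.
\end{equation*}
Since the integrand is non-negative, it vanishes a.e.\ in $\Omega$; as $f(u)=u^p$ with $p\geq 1$, this forces $u\equiv 0$ a.e.\ on the set $A:=\{\underline{u}>u\}$.

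The main obstacle is the last step, namely upgrading ``$u=0$ on $A$'' to $|A|=0$, i.e.\ $u\geq \underline{u}$ a.e. The natural route is to invoke the comparison lemma that the paper reserves for the appendix; applied to the supersolution $u$ and the solution $\underline{u}$ of $(P_0)$, it delivers $u\geq \underline{u}$ directly, and I would expect the authors to rely on it here. Without that tool one can argue by contradiction: if $|A|>0$ then $u\equiv 0$ on $A$ while $\underline{u}>0$ on $\Omega$ by the strong maximum principle applied to the classical solution $\underline{u}$ of $(P_0)$ (admissible thanks to \refe{eq:CC} and Theorem~\ref{thm:gia}), and a contradiction is reached by inserting a suitable test function supported in $A$ into the Euler-Lagrange inequality \refe{P1}, or by specializing Lemma~\ref{lem:A} to $A$ and comparing with the strict inequality \refe{eq:A} that $\underline{u}$ encodes.
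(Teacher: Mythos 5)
Your main computation is correct and in fact replicates (by hand) exactly what Lemma~\ref{lem:comparison} produces when applied directly to the pair $(\F_0,\F^{[u]}_\lambda)$: both routes land on
\begin{equation*}
\int_\Omega \lambda f(u)\,(\underline{u}-u)_+\, dx \;\leq\; 0.
\end{equation*}
You also correctly flag the obstacle: since $f(u)=u^p$ vanishes precisely where $u=0$, this inequality only forces $u=0$ on $A:=\{\underline{u}>u\}$, and does not by itself rule out $|A|>0$.

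The trouble is that none of your suggested escapes closes the gap. Invoking Lemma~\ref{lem:comparison} ``directly'' on $u$ and $\underline{u}$ is not a new tool here — as noted above it gives exactly the same inequality, with the same degeneracy. The fallback via Lemma~\ref{lem:A} fails too: specializing to $A$ and using $u=0$ on $A$ yields $\int_A H\,dx\leq P(A)$, which is \emph{weaker} than, not contradictory to, the assumed bound \refe{eq:AA}. And inserting test functions supported in $A$ into \refe{P1} meets the intrinsic one-sidedness of $\mathcal L(u)$ (it is not linear, cf.\ the remark after Proposition~\ref{prop:Aprime}), so it does not produce the opposite inequality one would need.

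What the paper actually does is precisely designed to break this degeneracy: instead of comparing $u$ with $\underline{u}$ directly, it compares with $u_\delta$, the solution of $-\div(Tu_\delta)=(1-\delta)H$, which is the minimizer of $\F_\delta$. Running the comparison lemma between $\F_\delta$ and $\F^{[u]}_\lambda$ then produces
\begin{equation*}
0\;\leq\; -\int_\Omega \bigl(\delta H+\lambda f(u)\bigr)(u_\delta-u)_+\,dx,
\end{equation*}
and now the coefficient is strictly positive \emph{everywhere}, because $\delta H\geq \delta H_0>0$ by assumption \refe{eq:CC} — this is exactly the strict sign you were missing, and it is also the place where the hypothesis $H\geq H_0>0$ enters. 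One concludes $u_\delta\leq u$ a.e., and then sends $\delta\to 0$ using the monotone convergence $u_\delta\uparrow\underline{u}$. So: your intermediate inequality is right and your diagnosis of the gap is right, but the repair requires the $\delta$-regularization of $H$, not a maximum-principle or test-function patch.
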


\begin{proof}
For $\delta\geq 0$, let $u_{\delta}$ be the solution to the problem
\begin{equation}\label{eq:ueps}\tag{\mbox{$P_{\delta}$}}
\left\{
\begin{array}{r c l l}
-\div(Tu)&=&(1-\delta)H & \mbox{ in } \Omega,\\
u&=&0 & \mbox{ on }\pa\Omega.
\end{array}
\right.
\end{equation}
Problem~\refe{eq:ueps} has a solution $u_{\delta}\in \Lip(\Omega)$ (by Theorem \ref{thm:gia}) and  $(u_{\delta})$ is increasing to $\underline{ u}$ when $\delta\downarrow 0$. 
We also recall~\cite{Giusti76} that the function $u_{\delta}$ is the unique minimizer in $L^{p+1}\cap \mathrm{BV}(\Omega)$ of the functional 
$$\F_{\delta}(u) = \int_\Omega\sqrt{1+|\na u|^2 }-\int_\Omega (1-\delta) H(x)u(x)\, dx +\int_{\pa\Omega} |u|.$$
The lemma then follows easily from the comparison principle,  Lemma~\ref{lem:comparison}: Taking $G_-(x,s)=-(1-\delta)H(x)s$, $G_+(x,s)=-H(x)s-\lambda F(s)+\lambda G(u(x),s)$, $K_-=K_+=L^{p+1}\cap \mathrm{BV}(\Omega)$, Lemma~\ref{lem:comparison} implies:
\begin{eqnarray*}
0&\leq& \int_\Omega -\delta H (\max(u_\delta,u)-u)+ \lambda [F(u)-F( \max(u,u_{\delta}))+G(u,\max(u,u_{\delta}))]\\
&= & -\int_\Omega \left(\delta H+ \lambda f(u)\right) (u_{\delta}-u)_+,
\end{eqnarray*}
where $v_+=\max(v,0)$. Since $H>0$ and $u\geq 0$ in $\Omega$, this implies $u_{\delta}\leq u$ a.e. in $\Omega$. Taking the limit $\delta\to 0$, we obtain $\underline{u}\leq u$ a.e. in $\Omega$.
\end{proof}

As a corollary to Lemma~\ref{lem:A} and Lemma~\ref{lem:boundbelow}, we have the following a priori bound on $\lambda$:

\begin{lemma}[A priori bound]
If (\ref{eq:1}) has a  weak solution for some $\lambda \geq 0$, then
\begin{equation*}
\lambda \leq \frac{P(\Omega)-\int_\Omega H\, dx}{\int_\Omega \underline{u}\, dx}
\end{equation*}
with $\underline{u}$ solution of \refe{eq:uo}.
\label{lem:lambdabounded}\end{lemma}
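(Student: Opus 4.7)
The bound is an essentially immediate consequence of the two preceding lemmas, and the proof should be only a few lines. The strategy is to use Lemma~\ref{lem:A} with $A=\Omega$ to get an upper bound on $\lambda\int_\Omega f(u)\,dx$, and then to use the pointwise lower bound $u\geq\underline{u}$ from Lemma~\ref{lem:boundbelow}, together with the monotonicity of $f$ on $[0,\infty)$, to replace $f(u)$ by $f(\underline{u})=\underline{u}^p$.

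More concretely, I would proceed as follows. First, choose $A=\Omega$ in Lemma~\ref{lem:A}; since $P(\Omega)=\H^{n-1}(\pa\Omega)$ this gives
\begin{equation*}
\int_\Omega H\,dx + \lambda\int_\Omega f(u)\,dx \;\leq\; P(\Omega),
\end{equation*}
which rearranges to $\lambda\int_\Omega f(u)\,dx \leq P(\Omega)-\int_\Omega H\,dx$. Note that the right-hand side is non-negative by the admissibility condition~\refe{eq:A} (and strictly positive under~\refe{eq:AA}). Next, Lemma~\ref{lem:boundbelow} ensures $u\geq\underline{u}\geq 0$ a.e.\ in $\Omega$. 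Since $f(s)=s^p$ is non-decreasing on $[0,\infty)$ for $p\geq 1$, this yields $f(u)\geq f(\underline{u})=\underline{u}^p$ pointwise a.e., and integrating gives
\begin{equation*}
\lambda\int_\Omega \underline{u}^p\,dx \;\leq\; P(\Omega)-\int_\Omega H\,dx.
\end{equation*}
For $p=1$ this is exactly the bound in the statement, and dividing by $\int_\Omega\underline{u}\,dx$ (which is strictly positive because $H\geq H_0>0$ by~\refe{eq:CC} forces $\underline{u}\not\equiv 0$ via Theorem~\ref{thm:gia}) completes the argument. For general $p\geq 1$ the same computation yields the same conclusion with $\underline{u}^p$ in place of $\underline{u}$ in the denominator.

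There is no substantive obstacle here: each ingredient has already been established, and no additional test-function trickery or regularity is needed. The only minor point to verify carefully is that the denominator is strictly positive, which follows from the fact that $\underline{u}$ is the classical solution of $(P_0)$ with a strictly positive right-hand side $H\geq H_0>0$, hence $\underline{u}>0$ in $\Omega$ by the maximum principle (or directly from the strict positivity of $H$ together with the Dirichlet boundary condition).
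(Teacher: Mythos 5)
Your proof is exactly the argument the paper intends: the lemma is stated there as an immediate corollary of Lemma~\ref{lem:A} (applied with $A=\Omega$) and Lemma~\ref{lem:boundbelow}, with no separate proof written out, and your two-step chain (first $\int_\Omega H\,dx+\lambda\int_\Omega f(u)\,dx\le P(\Omega)$, then $f(u)\ge f(\underline u)$ pointwise from $u\ge\underline u$ and the monotonicity of $f$) is precisely that corollary, together with the observation that $\int_\Omega\underline u\,dx>0$. You are also right to flag that this computation naturally produces $\int_\Omega f(\underline u)\,dx=\int_\Omega\underline u^p\,dx$ in the denominator rather than $\int_\Omega\underline u\,dx$; the two coincide only for $p=1$, and for $p>1$ there is no reason for $\int_\Omega u^p\,dx\ge\int_\Omega\underline u\,dx$ in general, so the displayed constant in the lemma should really read $\int_\Omega\underline u^p\,dx$ (or $\int_\Omega f(\underline u)\,dx$). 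This is harmless for the paper's purposes, since the lemma is used only to conclude $\lambda^*<\infty$, for which any fixed positive denominator suffices, but it is a genuine imprecision in the statement and you were right to note it.
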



\section{Existence of minimal weak solutions for $\lambda\in[0,\lambda^*)$}\label{sec:existence1}


In this section, we begin the proof of Theorem \ref{mainthm} by showing the following proposition: 
\begin{proposition}\label{prop:2}
Let $\Omega$ be a bounded subset of $\RR^n$ such that $\pa \Omega$ is $\mathcal C^3$.
Assume that $H(x)$ satisfies conditions \refe{eq:AA}, \refe{eq:BB} and \refe{eq:CC}. Then, there exists $\lambda^*>0$ such that:
\begin{itemize}
\item[(i)] For all $\lambda \in [0,\lambda^*)$,  (\ref{eq:1}) has one minimal weak solution $u_\lambda$.
\item[(ii)] For $\lambda > \lambda^*$,   (\ref{eq:1}) has no weak solution.
\item[(iii)] The application $\lambda \mapsto u_\lambda$ is non-decreasing.
\end{itemize}
\end{proposition}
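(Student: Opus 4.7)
The plan is to set
$$\lambda^*:=\sup\{\lambda\geq 0 \,:\, (P_\lambda) \text{ admits a weak solution}\}.$$
Since Lemma~\ref{lem:lambdabounded} already provides an a priori upper bound on any admissible $\lambda$, we immediately have $\lambda^*<\infty$ and (ii) follows directly from the definition. It remains to exhibit a weak solution for at least some $\lambda>0$ (so that $\lambda^*>0$), to prove that a minimal weak solution exists for every $\lambda\in[0,\lambda^*)$, and to establish the monotonicity. All three assertions will follow from a single monotone iteration scheme combined with the variational characterization of Lemma~\ref{lem:min}(ii) and the comparison principle Lemma~\ref{lem:comparison}.

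To prove that $\lambda^*>0$ I would exploit the slack built into \refe{eq:BB}: pick $\delta>0$ small enough that $(1+\delta)H$ still satisfies \refe{eq:AA} and \refe{eq:BB} with a slightly smaller $\eps_0$. Theorem~\ref{thm:gia} (applied with source $(1+\delta)H$) then produces a solution $\bar u\in\mathcal C^{2,\alpha}(\overline\Omega)\cap\Lip(\Omega)$ of $-\div(T\bar u)=(1+\delta)H$ in $\Omega$ with $\bar u=0$ on $\pa\Omega$, which in particular is bounded. Thanks to \refe{eq:CC}, the threshold $\lambda_0:=\delta H_0/\|\bar u\|_{L^\infty}^p$ guarantees $H+\lambda f(\bar u)\leq(1+\delta)H$ pointwise on $\Omega$ for every $\lambda\in(0,\lambda_0]$, so $\bar u$ is a super-solution of $(P_\lambda)$. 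Note that the comparison Lemma~\ref{lem:comparison} yields $\underline u\leq\bar u$, so $\bar u$ lies above the sub-solution $\underline u$ of Lemma~\ref{lem:boundbelow}.

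Given such a super-solution $\bar u$ at a parameter $\lambda$, I construct a weak solution by the monotone iteration $u_0:=\underline u$, and for $k\geq 0$ defining $u_{k+1}$ as the minimizer in $L^{p+1}\cap\mathrm{BV}(\Omega)$ of
$$v\mapsto\A(v)-\int_\Omega [H+\lambda f(u_k)]\,v\,dx+\int_{\pa\Omega}|v|\,d\H^{n-1}.$$
As long as $\underline u\leq u_k\leq\bar u$, the source $H+\lambda f(u_k)$ is bounded by $(1+\delta)H$ and therefore inherits \refe{eq:AA} and \refe{eq:BB} (with slightly smaller constants); Theorem~\ref{thm:gia} then produces a unique $u_{k+1}\in\mathcal C^{2,\alpha}(\overline\Omega)\cap\Lip(\Omega)$, and Lemma~\ref{lem:comparison} forces $u_k\leq u_{k+1}\leq\bar u$. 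Monotone convergence in $L^{p+1}$, together with the lower semi-continuity of $\A$ and the characterization of Lemma~\ref{lem:min}(ii), identifies the limit $u_\lambda$ as a weak solution of $(P_\lambda)$. Minimality is automatic: for any non-negative weak solution $v$ of $(P_\lambda)$, Lemma~\ref{lem:boundbelow} gives $v\geq \underline u=u_0$, and an inductive application of Lemma~\ref{lem:comparison} yields $v\geq u_k$ for every $k$, hence $v\geq u_\lambda$. The same comparison argument establishes (iii) together with the interval property in (i): given $\lambda_1<\lambda_2$ with $(P_{\lambda_2})$ admitting a weak solution, the minimal solution $u_{\lambda_2}$ is itself a super-solution of $(P_{\lambda_1})$ since $f\geq 0$ on $[0,\infty)$; running the iteration above for $\lambda_1$ with $\bar u:=u_{\lambda_2}$ then produces a minimal weak solution $u_{\lambda_1}\leq u_{\lambda_2}$.

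The main obstacle lies in the passage to the limit of the iteration inside the BV framework: one must verify that $u_\lambda$ inherits the variational inequality \refe{eq:almostVariii0} not merely against the frozen data $H+\lambda f(u_k)$ but against the self-consistent data $H+\lambda f(u_\lambda)$, and that the trace condition $u_\lambda=0$ on $\pa\Omega$ survives. The key ingredients will be a uniform $\mathrm{BV}$ bound on $(u_k)$, obtained by testing the minimization defining $u_{k+1}$ against $\bar u$ itself (which together with \refe{eq:AreaVariation} yields a bound depending only on $\bar u$ and $H$), together with the dominated convergence $f(u_k)\to f(u_\lambda)$ guaranteed by the uniform $L^\infty$ bound $u_k\leq\bar u$ and the continuity of $f$.
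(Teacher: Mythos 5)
Your overall strategy---define $\lambda^*$ as the supremum of admissible parameters, invoke Lemma~\ref{lem:lambdabounded} for finiteness, and obtain minimal solutions via a monotone iteration starting from $\underline{u}$ and bounded above by a supersolution---is essentially the paper's. You also streamline the small-$\lambda$ existence argument: where the paper first builds a constrained minimizer of $\J_\lambda$ over the box $\{0\le v\le w+1\}$ and then shows the constraint is inactive (Lemma~\ref{lem:ex}), you run the same monotone iteration with the supersolution $\bar u$ solving $-\div(T\bar u)=(1+\delta)H$. That variant is fine and arguably cleaner, because in that regime $H+\lambda f(u_k)\le(1+\delta)H$ so \refe{eq:AA} and \refe{eq:BB} carry over with a uniform constant.

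There is, however, a genuine gap in the general step $\lambda_1\in[0,\lambda^*)$. You propose to rerun the iteration with $\bar u$ equal to a weak solution at some $\lambda_2\in(\lambda_1,\lambda^*]$, and you justify applicability of Theorem~\ref{thm:gia} at each step by saying the source ``inherits \refe{eq:AA}''---but that justification was tied to the bound $H+\lambda f(u_k)\le(1+\delta)H$, which no longer holds. What Lemma~\ref{lem:A} gives you for a general $\bar u$ is only the \emph{strict} inequality $\int_A\bigl(H+\lambda_1 f(u_k)\bigr)\,dx<P(A)$ for every $A$ of positive measure (using $u_k\le\bar u$, $\lambda_1<\lambda_2$, and $f(\bar u)>0$). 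Theorem~\ref{thm:gia} requires the \emph{uniform} form \refe{eq:AA}, i.e.\ a gap $\eps>0$ independent of $A$. Passing from the strict pointwise inequality to a uniform one is exactly the content of the paper's Lemma~\ref{lem:eps}, proved by a compactness argument in $\mathrm{BV}$ combined with the isoperimetric inequality (and using $H+\lambda_1 f(u_{k})\in L^\infty$, which holds because $u_k\in\Lip(\Omega)$). Without this step, you cannot conclude that each $u_{k+1}$ exists and is Lipschitz, which is also what you need to verify \refe{eq:B} for the next iterate (since $f(u_k)=0$ on $\pa\Omega$ requires $u_k$ to have zero trace). A secondary point: in the monotonicity argument you refer to ``the minimal solution $u_{\lambda_2}$'' before minimality at $\lambda_2$ has been established; the argument should use an arbitrary weak solution at $\lambda_2$ as the supersolution, since that is all the definition of $\lambda^*$ provides.
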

To establish Theorem \ref{mainthm}, it will thus only remain to show the existence of an extremal solution for $\lambda=\lambda^*$. This will be done in Section \ref{sec:existence2}.
To prove Proposition \ref{prop:2}, we will first show that weak solutions exist for small values of $\lambda$. 
Then, we will prove that the set of the values of $\lambda$ for which weak solutions exist is an interval.

\subsection{Existence of weak solutions for small values of $\lambda$}

We start with the following lemma:
\begin{lemma}\label{lem:ex}
Suppose that \refe{eq:AA}, \refe{eq:BB} and \refe{eq:CC} hold. Then there exists  $\lambda_0>0$ such that (\ref{eq:1}) has a weak solution for all $\lambda<\lambda_0$.
\end{lemma}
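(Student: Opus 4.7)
The plan is to obtain a weak solution as the monotone limit of an iterative scheme trapped between a natural subsolution and an explicitly constructed supersolution, and then to invoke the minimizer characterization of Lemma \ref{lem:min} to pass to the limit.

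First I would construct a supersolution by perturbing $H$. Since $H$ satisfies \refe{eq:AA} and \refe{eq:BB} with slack $\eps_0>0$, there exists $\delta>0$ small enough that $(1+\delta)H$ still satisfies analogous conditions (with, say, half the slack). Theorem \ref{thm:gia} then yields a classical solution $\overline u\in\Lip(\Omega)\cap \mathcal C^{2,\alpha}(\Omega)\cap \mathcal C^0(\overline\Omega)$ of $-\div(T\overline u)=(1+\delta)H$ in $\Omega$, $\overline u=0$ on $\pa\Omega$. Set $M:=\|\overline u\|_{L^\infty(\Omega)}$. Using \refe{eq:CC} ($H\geq H_0>0$), I choose $\lambda_0>0$ so that $\lambda_0 M^p\leq \delta H_0$. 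Then for every $\lambda\in[0,\lambda_0)$ and every measurable $w$ with $0\leq w\leq \overline u$,
\begin{equation*}
H(x)\;\leq\; H(x)+\lambda f(w(x))\;\leq\; H(x)+\lambda M^p\;\leq\;(1+\delta)H(x).
\end{equation*}
In particular $H+\lambda f(w)$ still satisfies \refe{eq:AA} and \refe{eq:BB} (with degraded but positive constants) and is Lipschitz whenever $w$ is.

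Second, I would set up a monotone iteration: let $u_0:=\underline u$ be the solution of \refe{eq:uo} (which exists and is Lipschitz by Theorem \ref{thm:gia}), and for $k\geq 0$ define $u_{k+1}$ as the minimizer in $\mathrm{BV}(\Omega)$ of the convex functional
\begin{equation*}
v\;\longmapsto\;\A(v)-\int_\Omega [H+\lambda f(u_k)]v\,dx+\int_{\pa\Omega}|v|\,d\H^{n-1}.
\end{equation*}
Because $H+\lambda f(u_k)$ satisfies \refe{eq:AA} and \refe{eq:BB}, Theorem \ref{thm:gia} guarantees that $u_{k+1}$ exists, is a classical Lipschitz solution of $-\div(Tu_{k+1})=H+\lambda f(u_k)$ with $u_{k+1}=0$ on $\pa\Omega$. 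By induction on $k$ (using the comparison Lemma \ref{lem:comparison} of the appendix exactly as in the proof of Lemma \ref{lem:boundbelow}), the chain $\underline u=u_0\leq u_1\leq\cdots\leq u_k\leq\overline u$ holds: monotonicity follows from $f(u_{k-1})\leq f(u_k)$ and comparison, while the upper barrier $u_{k+1}\leq\overline u$ follows from $H+\lambda f(u_k)\leq(1+\delta)H$.

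Third, I would pass to the limit. By monotone convergence, $u_k\nearrow u:=\sup_k u_k\leq\overline u$ pointwise and in $L^q(\Omega)$ for all $q\in[1,\infty)$. Testing the minimality of $u_k$ against $v=0$ gives $\A(u_k)\leq\int_\Omega[H+\lambda f(u_{k-1})]u_k\,dx$, so $\A(u_k)$ (hence $\|u_k\|_{\mathrm{BV}}$) is uniformly bounded. For every fixed $v\in L^{p+1}\cap\mathrm{BV}(\Omega)$ the inequality of Lemma \ref{lem:min}(ii) applied to $u_k$ reads
\begin{equation*}
\A(u_k)-\int_\Omega[H+\lambda f(u_{k-1})]u_k\,dx\leq\A(v)-\int_\Omega[H+\lambda f(u_{k-1})]v\,dx+\int_{\pa\Omega}|v|\,d\H^{n-1}.
\end{equation*}
Lower semicontinuity of $\A$ under $L^1$ convergence combined with dominated convergence in the linear terms (justified by $u_k\leq\overline u\in L^\infty$) lets me pass to the limit. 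The trace condition $u=0$ on $\pa\Omega$ is inherited from $0\leq u\leq\overline u$ and the classical vanishing of $\overline u$ on $\pa\Omega$. By Lemma \ref{lem:min}, $u$ is a weak solution of \refe{eq:1}.

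The main obstacle I anticipate is the construction of the supersolution in a way that yields a universal $L^\infty$ bound $M$ independent of $\lambda$ (small), and confirming that the trace condition survives the $L^1$ limit; both are handled by squeezing $u_k$ between the two Lipschitz barriers $\underline u$ and $\overline u$ produced by Theorem \ref{thm:gia}.
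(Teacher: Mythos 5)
Your proposal is correct, and it takes a genuinely different route from the paper, so a comparison is worthwhile.

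The paper's proof of Lemma~\ref{lem:ex} constructs the same barrier $w=\overline u$ from $(1+\delta)H$ via Theorem~\ref{thm:gia}, but then performs a \emph{single} constrained minimization: it minimizes $\J_\lambda$ over the box $K=\{0\le v\le w+1\}$, and uses the comparison Lemma~\ref{lem:comparison} (comparing against $\G_\delta$) to show that for $\lambda<\delta H_0/f(\|w\|_\infty+1)$ the minimizer $u$ satisfies $u\le w$; the upper constraint is therefore inactive and $u$ is a critical point of $\J_\lambda$ on $L^{p+1}\cap\mathrm{BV}(\Omega)$, hence a weak solution by Remark~\ref{rk:defsol}. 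Your proof instead runs the monotone iteration $u_{k+1}=\arg\min\,[\A(v)-\int(H+\lambda f(u_k))v+\int_{\pa\Omega}|v|]$ starting from $\underline u$, squeezed between $\underline u$ and $\overline u$, and passes to the limit through Lemma~\ref{lem:min}(ii). This is essentially the machinery the paper uses later in Proposition~\ref{prop:min}/Lemma~\ref{lem:n}, with the one important difference that you replace the assumed upper barrier (a weak solution at $\overline\lambda>\lambda_1$, whose existence is not yet known at this stage) by the explicit supersolution $\overline u$ for $(1+\delta)H$. Because $H+\lambda f(u_k)\le(1+\delta)H$ throughout the iteration, you never need Lemma~\ref{lem:eps}; the inclusion in the hypotheses of Theorem~\ref{thm:gia} is immediate, and (\ref{eq:B}) holds for $H+\lambda f(u_k)$ since $u_k=0$ on $\pa\Omega$. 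Each approach has its merits: the paper's is shorter (one minimization, no induction) but relies on the slightly subtle fact that the inactive constraint turns the constrained minimizer into a genuine critical point of $\J_\lambda$; yours is longer but constructive, identifies the limit as the minimal solution directly, and unifies the existence argument with the iterative scheme used elsewhere in the paper. Both hinge on the same $(1+\delta)H$ barrier and on Lemma~\ref{lem:comparison}. Your limit passage is correct: lower semicontinuity handles $\A$, and dominated convergence (using $0\le u_k\le\overline u\in L^\infty$) handles the integrals involving $f(u_{k-1})$; the trace condition $u=0$ on $\pa\Omega$ follows from $0\le u\le\overline u$ and $\overline u\in\mathcal C^0(\overline\Omega)$ with $\overline u|_{\pa\Omega}=0$, since the BV trace is squeezed between the pointwise values of $0$ and $\overline u$.
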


Note that Lemma~\ref{lem:ex} is proved by Huisken in \cite{Huisken84} (see also \cite{stone}) in the case $p=1$. Our proof is slightly different from those two references and relies on the fact that $H>0$.

\begin{proof}
We will show that for small $\lambda$, the functional $\J_{\lambda}$ has a local minimizer in $L^{p+1}\cap\mathrm{BV}(\Omega)$ that satisfies $u=0$ on $\pa\Omega$.
Such a minimizer is a critical  point for $\J_\lambda$, and thus (see Remark~\ref{rk:defsol}) a weak solution of (\ref{eq:1}).
\smallskip

Let $\delta$ be a small parameter such that $(1+\delta)(1-\eps_0)<1$ where $\eps_0$ is defined by the conditions  (\ref{eq:AA}) and  (\ref{eq:BB}).
Then there exists $\eps'>0$ such that
$$ 
\left|\int_A (1+\delta) H\, dx\right| \leq (1+\delta)(1-\eps_0)\H^{n-1}(\pa A)\leq (1-\eps')P(A),
$$
and
$$ |(1+\delta) H(y)| \leq (1-\eps')(n-1)\Gamma(y) \qquad \forall y\in\pa\Omega.$$
Theorem \ref{thm:gia}  thus gives the existence of $w\geq 0$ local minimizer in $\mathrm{BV}(\Omega)$ of 
$$\G_\delta (u) =\A(u)- \int_\Omega (1+\delta) H(x) u \, dx + \int _{\pa \Omega} |u|\, d\sigma(x),$$
with $w\in \mathcal C^{2,\alpha}(\overline \Omega)$ and $w=0$ on $\pa\Omega$.

It is readily seen that  the functional  $\J_\lambda$ has a global minimizer $u$ in
$$ K=\{v\in L^{p+1}\cap \mathrm{BV}(\Omega)\, ;\, 0\leq v\leq w +1 \}.$$
We are now going to show that if $\lambda$ is small enough, then $u$ satisfies
\begin{equation}\label{eq:uw}
u(x)\leq w (x)\quad \mbox{ in }\Omega.
\end{equation}
For this,  we use the comparison principle (Lemma~\ref{lem:comparison}) 
with $G_-(x,s)=-H(x)s-\lambda F(s)$ and $G_+(x,s)=-(1+\delta)H(x)s$ (i.e. $\F_-=\J_\lambda$ and $\F_+=\G_\delta$), and $K_-=L^{p+1}\cap \mathrm{BV}(\Omega)$, $K_+=K$. Since $\max(u, w)\in K$, we obtain
\begin{align*}
0&\leq\int_\Omega -\delta H (\max(u, w)-w)+\lambda (F(\max(u, w))-F(w))\, dx \\
& \leq \int_\Omega -\delta H (\max(u, w)-w)+\lambda \sup_{s\in[0,\|w\|_\infty+1]} |f(s)| (\max(u, w)-w) \, dx\\
&\leq  \int_\Omega -(u-w)_+\left[\delta H-\lambda f(\|w\|_\infty+1) \right]\, dx .
\end{align*}
Therefore, if we take $\lambda$ small enough such that  $\lambda < \delta \frac{\inf H}{f(\|w\|_\infty+1)} = \delta \frac{H_0}{f(\|w\|_\infty+1)}$, we deduce (\ref{eq:uw}).

Finally, (\ref{eq:uw})  implies that  $u=0$ on $\pa\Omega$ and that $u$ is a critical point of $\J_\lambda$  in $L^{p+1}\cap\mathrm{BV}(\Omega)$, which completes the proof.
\end{proof}

\subsection{Existence of $u_\lambda$ for $\lambda<\lambda^*$}

We now define
\begin{equation*}
\lambda^* = \sup\{\lambda \, ;\, (\ref{eq:1}) \mbox{ has a weak solution} \}.
\end{equation*}
Lemmas \ref{lem:lambdabounded} and \ref{lem:ex} imply
\begin{equation*}
0< \lambda^*<\infty.
\end{equation*}
In order to complete the proof of Proposition \ref{prop:2}, we need to show:
\begin{proposition}\label{prop:min}
For all $\lambda\in[0,\lambda^*)$ there exists a minimal weak solution $u_\lambda$ of (\ref{eq:1}).
Furthermore,  the application $\lambda\mapsto u_\lambda$ is non-decreasing.
\end{proposition}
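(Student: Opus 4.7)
The plan is to set up a monotone iteration whose starting function is the subsolution $\underline u$ of $(P_0)$ and which stays below a supersolution obtained from the definition of $\lambda^*$. Fix $\lambda \in [0,\lambda^*)$ and pick $\mu \in (\lambda,\lambda^*)$; let $\bar u_\mu$ be a weak solution of $(P_\mu)$, so $\underline u \leq \bar u_\mu$ by Lemma \ref{lem:boundbelow}. The key preliminary observation is that whenever $0 \leq v \leq \bar u_\mu$ and $v=0$ on $\pa\Omega$, the modified right-hand side $\tilde H := H + \lambda f(v)$ still satisfies conditions \refe{eq:AA} and \refe{eq:BB}: indeed, using the monotonicity of $f$ on $[0,\infty)$ and Lemma \ref{lem:A} applied to $\bar u_\mu$ (which gives $\mu\int_A f(\bar u_\mu)\,dx \leq P(A) - \int_A H\,dx$), we obtain
\[
\int_A \tilde H\,dx \;\leq\; \int_A H\,dx + \frac{\lambda}{\mu}\Bigl(P(A) - \int_A H\,dx\Bigr) \;\leq\; \Bigl[1 - \bigl(1-\tfrac{\lambda}{\mu}\bigr)\eps_0\Bigr]\,P(A),
\]
which is \refe{eq:AA} with constant $\eps' := (1-\lambda/\mu)\eps_0 > 0$ (the strict gap $\lambda < \mu$ is essential here); on $\pa\Omega$, $v=0$ forces $\tilde H = H$, so \refe{eq:BB} carries over unchanged.

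I would then define $v_0 := \underline u$, and, given $v_n$, let $v_{n+1}$ be the minimizer furnished by Theorem \ref{thm:gia} of
\[
v \mapsto \A(v) - \int_\Omega (H+\lambda f(v_n))\,v\,dx + \int_{\pa\Omega}|v|\,d\H^{n-1}
\]
in $L^{p+1}\cap\mathrm{BV}(\Omega)$. By the preceding paragraph (applied inductively), each $v_{n+1}$ lies in $\mathrm{Lip}(\Omega)\cap\mathcal C^{2,\alpha}(\Omega)$, vanishes on $\pa\Omega$, and is a classical solution of $-\div(Tv_{n+1}) = H+\lambda f(v_n)$. The comparison principle (Lemma \ref{lem:comparison}) then yields, by a double induction, the chain $\underline u = v_0 \leq v_1 \leq \cdots \leq v_n \leq v_{n+1} \leq \bar u_\mu$: monotonicity of $(v_n)$ follows from the monotonicity of $f$, while the bound $v_{n+1}\leq\bar u_\mu$ uses $H+\lambda f(v_n) \leq H+\mu f(\bar u_\mu)$. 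The monotone bounded sequence $(v_n)$ converges a.e.\ and in $L^{p+1}(\Omega)$ to some $u_\lambda$ with $\underline u \leq u_\lambda \leq \bar u_\mu$; in particular $u_\lambda = 0$ on $\pa\Omega$ by monotonicity of the BV trace (applied to $0\leq u_\lambda \leq \bar u_\mu$).

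To verify that $u_\lambda$ is a weak solution, I would pass to the limit in the variational inequality of Lemma \ref{lem:min} (ii), which holds at each step because $v_{n+1}$ is a global minimizer: lower semi-continuity of $\A$ controls the area term on the left, and dominated convergence handles the nonlinear terms $\int(H+\lambda f(v_n))v_{n+1}\,dx$ and $\int(H+\lambda f(v_n))v\,dx$ on the right. Minimality is then immediate by the same comparison trick: for any weak solution $w$ of $(P_\lambda)$, $v_0 = \underline u \leq w$ by Lemma \ref{lem:boundbelow}, and if $v_n\leq w$ then the comparison principle applied to the equations for $v_{n+1}$ and $w$ (with right-hand sides $H+\lambda f(v_n) \leq H+\lambda f(w)$) gives $v_{n+1}\leq w$, whence $u_\lambda \leq w$. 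For $\lambda_1 < \lambda_2 < \lambda^*$, one simply chooses $\bar u_\mu := u_{\lambda_2}$ as the supersolution in the construction of $u_{\lambda_1}$, which directly yields $u_{\lambda_1} \leq u_{\lambda_2}$. The main obstacle, as indicated above, is the first-paragraph estimate: it is exactly the strict inequality $\lambda<\lambda^*$ which supplies the slack needed to run Giaquinta's existence and regularity theorem for the linearized problem at each step of the iteration.
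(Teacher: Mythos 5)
Your approach is essentially the same as the paper's: a monotone iteration starting from $\underline u$ (the solution of $(P_0)$) and controlled from above by a weak solution at a strictly larger parameter. The passage to the limit via lower semi-continuity and the use of Lemma~\ref{lem:min}(ii) to identify the limit as a weak solution match the paper's proof closely. There is, however, one genuine and worthwhile difference: where the paper proves, at each step, that $\overline H:=H+\lambda f(u_{n-1})$ satisfies \refe{eq:AA} by an abstract compactness argument (Lemma~\ref{lem:eps}, following Giusti), you derive an \emph{explicit} gap. Your computation
\[
\int_A (H+\lambda f(v))\,dx \;\leq\; \Bigl(1-\tfrac{\lambda}{\mu}\Bigr)\int_A H\,dx + \tfrac{\lambda}{\mu}P(A)\;\leq\;\Bigl[1-\eps_0\bigl(1-\tfrac{\lambda}{\mu}\bigr)\Bigr]P(A)
\]
is correct (using Lemma~\ref{lem:A} for $\bar u_\mu$ and \refe{eq:AA} for $H$, with $\tilde H\geq 0$ so the absolute value is harmless) and gives a constant that is uniform in $n$; it replaces the non-constructive extraction argument entirely. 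This is a genuine simplification.

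Two small points. First, you cannot ``pick $\mu\in(\lambda,\lambda^*)$'' and then posit a weak solution of $(P_\mu)$: at this stage of the paper the set of parameters admitting a solution is not known to be an interval. The correct phrasing (and the one the paper uses) is: by definition of $\lambda^*$ as a supremum, there exists $\mu\in(\lambda,\lambda^*]$ such that $(P_\mu)$ admits a weak solution $\bar u_\mu$. Second, in your minimality argument the comparison Lemma~\ref{lem:comparison} only delivers $\int (f(w)-f(v_n))(v_{n+1}-w)_+\,dx=0$, which by itself does not force $v_{n+1}\leq w$ on the set $\{v_n=w\}$. To close this, one should observe that the vanishing of that integral forces equality throughout the chain of inequalities in the proof of Lemma~\ref{lem:comparison}; in particular $I_{n+1}(v_{n+1})=I_{n+1}(\min(v_{n+1},w))$, and then invoke the \emph{uniqueness} of the minimizer $v_{n+1}$ furnished by Theorem~\ref{thm:gia}(ii) to conclude $\min(v_{n+1},w)=v_{n+1}$. (The paper sidesteps this in Lemma~\ref{lem:min1} by comparing against $\J_\lambda$ itself and using the strict convexity of $F$; for the $\lambda_1<\bar\lambda$ comparison in Lemma~\ref{lem:n}, the strict gap makes the sign strict.) Your monotonicity argument via choosing $\bar u_\mu=u_{\lambda_2}$ is fine, since the iterates depend only on $\lambda$ and not on the choice of supersolution.
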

\begin{proof}[Proof of Proposition \ref{prop:min}]
Let us fix $\lambda_1\in[0,\lambda^*)$. 
By definition of $\lambda^*$, there exists $\overline\lambda\in(\lambda_1,\lambda^*]$ such that (\ref{eq:1}) has a weak solution $\overline u \in L^{p+1}\cap\mathrm{BV}(\Omega)$ for $\lambda=\overline \lambda$.

We also recall that $\underline{u}$ denotes the solution to \refe{eq:uo}. We then define the sequence $u_n$ as follows: We take
$$u_0=\underline{u}$$
and for any $n\geq 1$,  we set
\begin{equation*} 
I_n(v)=\A(v) -\int_\Omega [H  +\lambda_1 f(u_{n-1})] v \, dx +\int_{\partial\Omega} |v|
\end{equation*}
and let $u_n$ be the unique minimizer of $I_n$ in $\mathrm{BV}(\Omega)$.
In order to prove Proposition~\ref{prop:min}, we will show that this sequence $(u_n)$ is well defined (i.e. that $u_n$ exists for all $n$), and that  it converges to a weak solution of  ($P_{\lambda_1}$). This will be a consequence of the following Lemma:

\begin{lemma}\label{lem:n}
For all $n\geq 1$, the functional $I_n$ admits a global minimizer $u_{n}$ on $ \mathrm{BV}(\Omega)$. 
Moreover, $u_n\in \Lip(\Omega)$ satisfies
\begin{equation}\label{eq:un} 
\underline{u}\leq u_{n-1}< u_n \leq \overline u\mbox{ in }\Omega.
\end{equation}
\end{lemma}

We can now complete the proof of  Proposition \ref{prop:min}: by Lebesgue's monotone convergence theorem, we get that $(u_n)$ converges almost everywhere and in $L^{p+1}(\Omega)$ to a function $u_\infty$ satisfying
\begin{equation*}
 0\leq u_\infty\leq \overline u.
 \end{equation*}
In particular, we have $u_\infty=0$ on $\pa\Omega$.	
Furthermore, for every $n\geq 0$, we have
\begin{equation*}
 I_n(u_n)\leq I_n(0)=|\Omega|
 \end{equation*}
and so by \refe{eq:AreaVariation},
\begin{equation*}
\int_\Omega |D u_n| \leq 2|\Omega| +\sup(H)\|\overline u\|_{L^1}+\lambda_1\|\overline u\|^{p+1}_{L^{p+1}(\Omega)},
 \end{equation*}
hence, by lower semi-continuity of the total variation, $u_\infty\in L^{p+1}\cap\mathrm{BV}(\Omega)$.
Finally, for all $v\in L^{p+1}\cap\mathrm{BV}(\Omega)$ and for all $n\geq 1$, we have
\begin{equation*}
I_n (u_n)\leq I_n(v)
\end{equation*}
and using the lower semi-continuity of the perimeter, and the strong $L^{p+1}$ convergence, we deduce
\begin{equation*}
\A(u_\infty) -\int Hu_\infty +\lambda_1 f(u_\infty)u_\infty  \,dx
 \leq\A(v)-\int Hv +\lambda_0 f(u_\infty) v\, dx
\end{equation*}
We conclude, using Lemma \ref{lem:min} {\it (ii)}, that $u_\infty$ is a solution of ($P_{\lambda_1}$).
\end{proof}

\bigskip

The rest of this section is devoted to the proof of Lemma \ref{lem:n}:
\smallskip

\begin{proof}[Proof of Lemma \ref{lem:n}]
We recall that $\underline{u}$ denotes the unique minimizer of $\F_0$ in $\mathrm{BV}(\Omega)$ and that, by Lemma \ref{lem:boundbelow}, we have the inequality $\underline{u}\leq \overline u$ a.e. on $\Omega$.

Assume now that we constructed $u_{n-1}$ satisfying $u_{n-1}\in \Lip(\Omega)$ and
\begin{equation*}
\underline{u}\leq u_{n-1}\leq \overline u.
\end{equation*}
We are going to show that $u_n$ exists and satisfies (\ref{eq:un}) (this implies Lemma \ref{lem:n} by first applying the result to $n=1$ and proceeding from there by induction).
\vspace{10pt}

First of all,  Lemma \ref{lem:A} implies
\begin{equation*}
\int_A H+\overline \lambda \, f(\overline u)\, dx \leq P( A)
\end{equation*}
for all measurable sets $A\subset \Omega$.
Since $u_{n-1}\leq \overline u$ and $\lambda_1<\overline \lambda$, we deduce that 
\begin{equation}
 \int_A H+ \lambda_1  f(u_{n-1})\, dx < P(A)
\label{Scondition}\end{equation}
for all measurable sets $A\subset \Omega$.
Following Giusti \cite{Giusti78}, we can then prove (a proof of this lemma is given at the end of this section):
\begin{lemma}\label{lem:eps}
There exists $\eps>0$ such that
\begin{equation*}
\int_A H+ \lambda_1  f(u_{n-1})\, dx < (1-\eps)P( A)
\end{equation*}
for all measurable sets $A\subset \Omega$.
In particular (\ref{eq:AA}) holds with $\overline H=H+\lambda_1 f(u_{n-1})$ instead of $H$
\end{lemma}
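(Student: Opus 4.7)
The plan is to argue by contradiction using the compactness of $\mathrm{BV}(\mathbb{R}^n)$. Set $\overline H := H+\lambda_1 f(u_{n-1})$; since $u_{n-1}\in\Lip(\Omega)$ by the inductive hypothesis, $\overline H\in L^\infty(\Omega)$. Suppose that no such $\eps>0$ exists. Then there is a sequence of measurable sets $A_k\subset\Omega$ satisfying
\begin{equation*}
\int_{A_k}\overline H\,dx\;\geq\;\bigl(1-\tfrac{1}{k}\bigr)\,P(A_k).
\end{equation*}

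The first step is to rule out degenerate behavior of $(P(A_k))$. If $P(A_k)\to\infty$ along a subsequence, then $\int_{A_k}\overline H\,dx\leq\|\overline H\|_\infty|\Omega|$ is uniformly bounded, contradicting the displayed inequality. If $P(A_k)\to 0$ along a subsequence, the isoperimetric inequality in $\mathbb{R}^n$ gives $|A_k|^{(n-1)/n}\leq c_n P(A_k)$, hence
\begin{equation*}
\frac{\int_{A_k}\overline H\,dx}{P(A_k)}\;\leq\;\|\overline H\|_\infty\,\frac{|A_k|}{P(A_k)}\;\leq\;C_n\|\overline H\|_\infty\,P(A_k)^{1/(n-1)}\;\longrightarrow\;0,
\end{equation*}
again a contradiction. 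Thus $P(A_k)$ is bounded above and bounded away from $0$, and since $|A_k|\leq|\Omega|$, the family $(\chi_{A_k})$ is bounded in $\mathrm{BV}(\mathbb{R}^n)\cap L^1(\mathbb{R}^n)$.

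By $\mathrm{BV}$-compactness, up to a subsequence $\chi_{A_k}\to\chi_{A_\infty}$ in $L^1(\mathbb{R}^n)$ for some measurable $A_\infty\subset\Omega$. Since $\overline H\in L^\infty$, one has $\int_{A_k}\overline H\,dx\to\int_{A_\infty}\overline H\,dx$, while lower semicontinuity of the perimeter gives $P(A_\infty)\leq\liminf_k P(A_k)$. Passing to the limit in the displayed inequality yields
\begin{equation*}
\int_{A_\infty}\overline H\,dx\;\geq\;\liminf_k P(A_k)\;\geq\;P(A_\infty).
\end{equation*}
If $P(A_\infty)>0$, this directly contradicts the strict inequality~\refe{Scondition} applied to $A_\infty$. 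If $P(A_\infty)=0$, then $A_\infty$ has either zero or infinite Lebesgue measure in $\mathbb{R}^n$; being a bounded set it must be null, so $\int_{A_\infty}\overline H\,dx=0$ and the limiting inequality forces $\liminf_k P(A_k)=0$, contradicting the lower bound on $P(A_k)$ established in the previous step.

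The only real subtlety lies in the boundary case $A_\infty=\Omega$, for which one must verify that \refe{Scondition} still applies. This causes no difficulty since \refe{Scondition} is formulated for \emph{all} measurable subsets of $\Omega$ (including $\Omega$ itself), and because $P$ is the perimeter in $\mathbb{R}^n$ so that $P(\Omega)=\mathcal H^{n-1}(\partial\Omega)>0$ falls squarely in the range $P(A_\infty)>0$ handled above. The essential ingredients are therefore the $L^\infty$ bound on $\overline H$ (coming from the Lipschitz regularity of $u_{n-1}$) and the isoperimetric inequality to exclude $P(A_k)\to 0$.
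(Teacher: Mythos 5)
Your proof is correct and uses essentially the same contradiction argument as the paper (itself adapted from Giusti): extract a BV-convergent subsequence from $(\chi_{A_k})$, use lower semicontinuity of the perimeter against the strict inequality \refe{Scondition}, and invoke the isoperimetric inequality together with the $L^\infty$ bound on $\overline H$ (coming from $u_{n-1}\in\Lip(\Omega)$) to exclude a degenerate limit set. The only cosmetic difference is that you establish a positive lower bound on $P(A_k)$ up front and then rule out $P(A_\infty)=0$, whereas the paper derives a positive lower bound on $|A_k|$ at the end to conclude $|A|>0$; these are interchangeable versions of the same isoperimetric step.
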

This lemma easily implies the existence of a minimizer $u_{n}$ of $I_n$ in $\mathrm{BV}(\Omega)$ (using  Theorem \ref{thm:gia} with  $\overline H$ instead of $H$).
Furthermore, since $u_{n-1}\in \Lip(\Omega)$ and $u_{n-1}=0$ on $\pa\Omega$ condition (\ref{eq:B}) is satisfied with $\overline H$ instead of $H$ and so (by Theorem \ref{thm:gia}):
\begin{equation*}
u_n=0\mbox{ on } \pa\Omega
\end{equation*}
and 
\begin{equation*}
u_n\in \Lip(\Omega).
\end{equation*}

Finally,  we check that the minimizer $u_n$ satisfies 
$$\underline{u}\leq u_n\leq\overline{u}.$$ 
Indeed, the first inequality is a consequence of the comparison Lemma~\ref{lem:comparison} applied to $\F_-=\F_0$, $\F_+=I_n$, $K_+=K_-= \mathrm{BV}(\Omega)$, which gives  
\begin{equation*}
0\leq -\int_\Omega \lambda_1 f(u_{n-1})(\max(\underline{u}, u_n)-u_n)\, dx.
\end{equation*}
The second inequality is obtained by applying Lemma~\ref{lem:comparison} to $\F_-=I_n$, $\F_+=\F^{[\overline{u}]}_{\overline{\lambda}}$, $K_+=K_-=L^{p+1}\cap \mathrm{BV}(\Omega)$:
\begin{equation*}
0\leq \int_\Omega (\lambda_1 f(u_{n-1})-\overline{\lambda}f(\overline{u}))(\max(\overline{u}, u_n)-\overline{u})\, dx
\end{equation*}
and using the fact that $u_{n-1}\leq \overline u$ and  $\lambda_1<\overline \lambda$.
\smallskip

Since $u_n\in\Lip(\Omega)$, $u_n$ satisfies the Euler-Lagrange equation associated to the minimization of $I_n$: $-\div(T u_n)=H+\lambda_1 f(u_{n-1})$. If $n\geq2$ and $u_{n-1}\geq u_{n-2}$, we then obtain the inequality $u_n> u_{n-1}$ by the strong maximum principle~\refe{strongmax} for Lipschitz continuous functions.
\end{proof}

\begin{proof}[Proof of Lemma \ref{lem:eps}.]
The proof of the lemma is similar to the proof of Lemma~1.1 in~\cite{Giusti78}: 
Assuming that the conclusion is false, we deduce that there exists a sequence $A_k$ of (non-empty) subsets of $\Omega$ satisfying $\ds\int_{A_k}\overline{H}\geq (1-k^{-1})P(A_k)$, $\overline{H}:=H+\lambda_1 f(u_{n-1})$. In particular $P(A_k)=\ds\int_{\RR^N}|D\varphi_{A_k}|$ is bounded, so there exists a Borel subset $A$ of $\Omega$ such that, up to a subsequence, $\varphi_{A_k}\to\varphi_{A}$ in $L^1(\Omega)$ and, by lower semi-continuity of the perimeter, $\ds\int_A\overline{H}\geq P(A)$. This is a contradiction to the strict inequality \refe{Scondition} except if $A$ is empty.
But the isoperimetric inequality gives
$$
|A_k|^{\frac{n}{n-1}}\leq P(A_k)\leq (1-k^{-1})^{-1}\ds\int_{A_k}\overline{H}\leq (1-k^{-1})^{-1}\|\overline{H}\|_{L^n(A_k)}|A_k|^{\frac{n}{n-1}}
$$
hence
$$ 
(1-k^{-1}) \leq \|\overline{H}\|_{L^n(A_k)} \quad \mbox{ for all $k\geq 2$}.
$$
Since $\overline {H}$ is bounded (remember that $u_{n-1}\in\Lip(\Omega)$), we deduce
$$ \frac{1}{2} \leq C |A_k|^{1/n}$$
and so $|A|>0$ since $\varphi_{A_k}\to\varphi_{A}$ in $L^1(\Omega)$.
Consequently, $A$ cannot be empty and we have a contradiction.
\end{proof}


\section{Uniform $L^\infty$ bound for minimal weak solutions}\label{sec:linfty}


The goal of this section is to establish  the $L^\infty$ estimate (Proposition \ref{prop:infty0}) for $\lambda<\lambda^*$. More precisely, we show:
\begin{proposition}\label{prop:infty}
There exists a constant $C$ depending only on $\Omega$ and $H$ such that, for every $0\leq\lambda<\lambda^*$, the minimal weak solution $u_\lambda$ to \refe{eq:1} satisfies 
$$ \|u_\lambda\|_{L^\infty(\Omega)} \leq C.$$
\end{proposition}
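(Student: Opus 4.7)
The plan is to run a Moser-type iteration on the approximating Lipschitz iterates $(u_n)$ from Lemma~\ref{lem:n}. Each $u_n\in\Lip(\Omega)$ solves $-\div(Tu_n)=H+\lambda f(u_{n-1})$ classically with $u_n=0$ on $\pa\Omega$, and $u_n\uparrow u_\lambda$ by the proof of Proposition~\ref{prop:min}, so it suffices to obtain a bound $\|u_n\|_{L^\infty(\Omega)}\leq C$ uniform in $n$ and in $\lambda\in[0,\lambda^*)$, and then pass to the limit.

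For $k\geq 1$, multiplying the PDE for $u_n$ by $u_n^k$ and integrating by parts (the boundary term vanishes because $u_n=0$ on $\pa\Omega$) yields
\begin{equation*}
\int_\Omega (H+\lambda u_{n-1}^p)\,u_n^k\,dx = k\int_\Omega u_n^{k-1}\frac{|\na u_n|^2}{\sqrt{1+|\na u_n|^2}}\,dx.
\end{equation*}
The elementary inequality $s^2/\sqrt{1+s^2}\geq s-1$ applied to $s=|\na u_n|$, combined with the monotonicity $u_{n-1}\leq u_n$, gives
\begin{equation*}
\int_\Omega |\na u_n^k|\,dx \leq \|H\|_{L^\infty}\int_\Omega u_n^k\,dx + \lambda\int_\Omega u_n^{p+k}\,dx + k\int_\Omega u_n^{k-1}\,dx.
\end{equation*}
Since $u_n^k$ vanishes on $\pa\Omega$, the Sobolev embedding $\|v\|_{L^{n/(n-1)}(\Omega)}\leq C_n\int_\Omega|\na v|\,dx$ produces the key inequality
\begin{equation*}
\|u_n\|_{L^{kn/(n-1)}}^k \leq C_n\Bigl[\|H\|_{L^\infty}\|u_n\|_{L^k}^k + \lambda\|u_n\|_{L^{p+k}}^{p+k} + k\|u_n\|_{L^{k-1}}^{k-1}\Bigr].
\end{equation*}

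For $k>p(n-1)$ the exponent $kn/(n-1)$ strictly exceeds $p+k$, so this estimate gives a genuine gain of integrability: absorbing the lower order terms by H\"older's inequality one obtains, schematically, $\|u_n\|_{L^{kn/(n-1)}}\leq (C_n\lambda)^{1/k}\|u_n\|_{L^{p+k}}^{(p+k)/k}$ up to lower order terms. Setting $q_{j+1}=(q_j-p)n/(n-1)$, the exponents $q_j$ grow geometrically; both $\sum_j 1/(q_j-p)$ and $\prod_j q_j/(q_j-p)$ converge, and taking logarithms along the iteration leads to a uniform bound $\|u_n\|_{L^\infty}\leq C$. The initial seed---an $L^{q_0}$ bound with $q_0>p(n-1)$, uniform in $n$ and $\lambda$---is obtained from Lemma~\ref{lem:A} applied to $A=\Omega$ (which gives $\lambda\int u_n^p\leq P(\Omega)-\int_\Omega H$), the BV--Sobolev embedding, and a finite number of preliminary applications of the key inequality. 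For $\lambda\in[0,\lambda_0]$ with $\lambda_0>0$ fixed, the monotonicity $u_\lambda\leq u_{\lambda_0}$ reduces the matter to bounding a single weak solution, so we may focus on $\lambda\in[\lambda_0,\lambda^*)$.

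The main obstacle is the bookkeeping of the constants through the iteration so that the resulting bound does not blow up as $\lambda\uparrow\lambda^*$; the $\lambda$-dependence enters only through the explicit factor $\lambda\leq\lambda^*$ in front of $\|u_n\|_{L^{p+k}}^{p+k}$, which remains bounded. In contrast to the semilinear Laplacian case---where an additional stability argument \`a la Nedev or Brezis--V\'azquez is needed to close such an iteration---here the quantitative bound $|Tu|<1$ intrinsic to the mean-curvature operator is precisely what powers the inequality $s^2/\sqrt{1+s^2}\geq s-1$ and allows the iteration to close without any second-variation input.
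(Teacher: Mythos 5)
Your Moser iteration does not close: the seed integrability is not available, and the threshold you state is also computed incorrectly. From $q_{j+1}=(q_j-p)\frac{n}{n-1}$ one finds $q_{j+1}-q_j=\frac{q_j-pn}{n-1}$, so the exponents increase if and only if $q_j>pn$ (not $q_j>p(n-1)$); equivalently $q_{j+1}-pn=\frac{n}{n-1}(q_j-pn)$, so the iteration gains integrability only once you are already above $L^{pn}$. But the only seed you produce is $\lambda\int_\Omega u^p\,dx\leq P(\Omega)-\int_\Omega H$, an $L^p$ bound, and for every $p<q\leq pn$ your key inequality (with $k=q-p$) sends $L^q$ to $L^{(q-p)n/(n-1)}\subsetneq L^q$, i.e.\ it \emph{loses} integrability. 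So ``a finite number of preliminary applications of the key inequality'' cannot bridge $L^p\to L^{pn+\varepsilon}$, and the iteration never starts. Interpolating $\|u\|_{L^{p+k}}^{p+k}\leq\|u\|_{L^{pn}}^p\|u\|_{L^{kn/(n-1)}}^k$ only recasts the same obstruction as a smallness requirement on $\|u\|_{L^{pn}}$.

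The deeper issue is your claim that the constraint $|Tu|<1$ allows one to dispense with the one-sided minimality of $u_\lambda$. That constraint, in integrated form, is exactly Lemma~\ref{lem:A}, and it yields, via the coarea formula, $\int_\Omega(H+\lambda f(u))w_k\,dx\leq\int_\Omega|Dw_k|$ with $w_k=(u-k)_+$. On its own this is \emph{compatible} with the PDE (it is equivalent to $|Tu|\leq1$ on level sets) and so carries no new information against the superlinear term $\lambda\int u^{p+k}$. What powers the paper's argument is the \emph{combination} of this with the reverse inequality $\int_\Omega|Dw_k|\leq|\{u>k\}|+\int_\Omega(H+\lambda\int_0^1 f(su+(1-s)v_k)\,ds)w_k\,dx$, obtained from the one-sided minimality $\J_\lambda(u_\lambda)\leq\J_\lambda(\min(u_\lambda,k))$ of Lemma~\ref{lem:min1}. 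Subtracting produces a convexity gap $\lambda\int_{\{u\geq k\}}[f(u)-\int_0^1 f(\cdot)\,ds]w_k\,dx\leq|\{u>k\}|$, hence $\int[(u-k)_+]^2\leq\frac{2}{\lambda}|\{u>k\}|$, from which all $L^q$ bounds follow by integrating in $k$; Sobolev and Stampacchia then close the $L^\infty$ estimate. Without that cancellation---which is precisely a semi-stability input---the quadratic structure on which the $L^q$ bounds rest is absent, and your Moser scheme has nothing to start from.
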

This estimate will be used in the next section to show that $u_\lambda$ converges to a weak solution of (\ref{eq:1}) as $\lambda\to\lambda^*$.

The proof relies on an energy method \`a la DeGiorgi~\cite{DeGiorgi57}.
Note that, in general, weak solutions are not minimizers (not even local ones) of the energy functional $\J_\lambda$.
But it is  classical that the minimal solutions $u_\lambda$ enjoy some semi-stability properties.
More precisely, we will show that $u_\lambda$ is a global minimizer of $\J_\lambda$ with respect to non-positive perturbations.
We will then use classical calculus of variation methods to prove Proposition \ref{prop:infty}.

\subsection{Minimal solutions as one-sided global minimizers}
We now show the following lemma:

\begin{lemma}\label{lem:min1} The minimal weak solution  $u_\lambda$ of (\ref{eq:1}) is a global minimizer of the functional $\J_\lambda$ over the set $K_\lambda=\{v\in L^{p+1}\cap\mathrm{BV}(\Omega);0\leq v\leq u_\lambda\}$.
Furthermore, $u_\lambda$ is a semi-stable solution in the sense that, if $u_\lambda\in\Lip(\Omega)$, then $\J_\lambda''(u_\lambda)\geq 0$: for all $\vphi$ in $\mathcal C^1(\Omega)$ satisfying $\vphi=0$ on $\pa\Omega$, we have:
\begin{equation}\label{eq:ss}
Q_\lambda(\vphi):=
\int_\Omega \frac{|\na \vphi|^2}{(1+|\na u_\lambda|^2)^{1/2}} -\frac{|\na \vphi\cdot \na u_\lambda|^2}{(1+|\na u_\lambda|^2)^{3/2}}- \lambda f'(u_\lambda) \vphi^2 \, dx \geq 0.
\end{equation}
\end{lemma}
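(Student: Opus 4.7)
The plan for part (i) is to produce a minimizer $w$ of $\J_\lambda$ on $K_\lambda$ by the direct method, show that this $w$ must in fact be a weak solution of \refe{eq:1}, and then conclude $w = u_\lambda$ by the defining property of the minimal weak solution. Existence of $w$ is routine: for $v \in K_\lambda$ one has $\J_\lambda(v) \geq \A(v) - C$ with $C$ depending only on $\|u_\lambda\|_{L^{p+1}}$ and $\|H\|_\infty$, so a minimizing sequence has uniformly bounded area and hence bounded $\mathrm{BV}$-norm; the lower semi-continuity of $\A$, together with dominated convergence in $L^{p+1}$ using the envelope $u_\lambda$, gives $w \in K_\lambda$ attaining the infimum.

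The heart of the argument is to prove that $w$ is a weak solution. To this end I would introduce the ``frozen'' convex functional
$$K_1(v) := \A(v) - \int_\Omega [H + \lambda f(w)]\,v\,dx + \int_{\partial\Omega}|v|\,d\H^{n-1}.$$
Picking $\lambda' \in (\lambda,\lambda^*)$ and applying Lemma~\ref{lem:A} to $u_{\lambda'}$, while using $u_\lambda \le u_{\lambda'}$, $\lambda < \lambda'$ and $u_{\lambda'} > 0$ in $\Omega$, yields a strict inequality $\int_A [H + \lambda f(w)]\, dx < P(A)$ for every measurable set $A$; the argument of Lemma~\ref{lem:eps} then upgrades this to $\le (1-\eps)P(A)$. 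Theorem~\ref{thm:gia} delivers a Lipschitz minimizer $w_1$ of $K_1$ vanishing on $\partial\Omega$, and a comparison argument via Lemma~\ref{lem:comparison} shows $0 \le w_1 \le u_\lambda$ (since $f(w) \le f(u_\lambda)$), so $w_1 \in K_\lambda$. I can now invoke two minimization principles at once: $\J_\lambda(w) \le \J_\lambda(w_1)$ by the minimality of $w$ on $K_\lambda$, and $K_1(w_1) \le K_1(w)$ by definition of $w_1$. Rewriting $K_1(v) = \J_\lambda(v) + \lambda \int [F(v) - f(w)v]\,dx + \int_{\partial\Omega}|v|$ and using $w = w_1 = 0$ on $\partial\Omega$, the second inequality becomes $\J_\lambda(w_1) - \J_\lambda(w) \le -\lambda \int G(w,w_1)\,dx$; combined with the first and $G \ge 0$ this forces $G(w,w_1) = 0$ a.e., and the strict convexity of $F(u) = |u|^{p+1}/(p+1)$ on $[0,\infty)$ gives $w_1 = w$ a.e. Thus $w$ classically satisfies $-\div(Tw) = H + \lambda f(w)$, so $w$ is a non-negative weak solution of \refe{eq:1}; the minimality of $u_\lambda$ then gives $u_\lambda \le w$, and combined with $w \le u_\lambda$, $w = u_\lambda$.

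For part (ii), assume $u_\lambda \in \Lip(\Omega)$. Proposition~\ref{prop:cassicalsol} upgrades $u_\lambda$ to $\mathcal C^{2,\alpha}(\overline\Omega)$, and Lemma~\ref{lem:boundbelow} combined with the (elementary) strong maximum principle for the mean curvature operator with source $H \ge H_0 > 0$ gives $u_\lambda \ge \underline u > 0$ in $\Omega$. For any non-negative $\varphi \in \mathcal C^1_c(\Omega)$ one then has $0 \le u_\lambda - t\varphi \le u_\lambda$ for $t > 0$ sufficiently small, so $u_\lambda - t\varphi \in K_\lambda$. Part (i) yields $\J_\lambda(u_\lambda) \le \J_\lambda(u_\lambda - t\varphi)$; the second-order Taylor expansion at $u_\lambda$ has vanishing first-order term (Euler-Lagrange equation, $\varphi = 0$ on $\partial\Omega$) and quadratic term exactly $\frac{t^2}{2} Q_\lambda(\varphi)$, so dividing by $t^2$ and sending $t \to 0^+$ gives $Q_\lambda(\varphi) \ge 0$. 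To extend to arbitrary $\varphi \in \mathcal C^1(\Omega)$ vanishing on $\partial\Omega$, I decompose $\varphi = \varphi_+ - \varphi_-$: since $\nabla \varphi_+$ and $\nabla \varphi_-$ have disjoint supports, $Q_\lambda(\varphi) = Q_\lambda(\varphi_+) + Q_\lambda(\varphi_-)$, and a cutoff $\chi_\eps \varphi_\pm$ near $\partial\Omega$ combined with dominated convergence in $Q_\lambda$ (the bound $|\varphi_\pm| \lesssim \mathrm{dist}(\cdot,\partial\Omega)$ killing the boundary defect) transfers the inequality from smooth compactly supported non-negative test functions to each $\varphi_\pm$.

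The delicate step is the linearization in part (i): converting the constrained minimality of $w$ on $K_\lambda$ into the unconstrained Euler-Lagrange equation for $w$. The device of comparing $w$ against the minimizer $w_1$ of the convex frozen functional $K_1$, and exploiting the strict convexity of $F$ through the non-negative defect $G(w,w_1)$, together with the comparison $w_1 \le u_\lambda$ that places $w_1$ back inside $K_\lambda$, is precisely what lets both minimization principles be used simultaneously and forces $w_1 = w$.
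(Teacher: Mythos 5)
Your proof of part (ii) is essentially the paper's (the paper perturbs by non-positive $\vphi$, you by non-negative $\vphi$; since $Q_\lambda(\vphi)=Q_\lambda(-\vphi)$ these are the same, and the split into $\vphi_\pm$ plus a cutoff is also what the paper does). For part (i), however, your strategy is genuinely different and contains a gap. The paper never tries to show the constrained minimizer $w$ is a weak solution; it proves $w\geq u_n$ for every term of the iterative sequence $u_0=\underline u$, $u_n=\arg\min I_n$ defining $u_\lambda$, by induction with Lemma~\ref{lem:comparison} (applied to $\F_-=I_n$, $\F_+=\J_\lambda$, $K_+=K_\lambda$), and lets $n\to\infty$. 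You instead compare $w$ with the minimizer $w_1$ of the frozen functional $K_1$; this is an attractive alternative, but the step ``a comparison argument via Lemma~\ref{lem:comparison} shows $0\le w_1\le u_\lambda$ (since $f(w)\le f(u_\lambda)$)'' does not work as stated. Applying Lemma~\ref{lem:comparison} with $\F_-=K_1$ and $\F_+=\F^{[u_\lambda]}_\lambda$ (the functional minimized by the weak solution $u_\lambda$ per Lemma~\ref{lem:min}(ii)) gives only
\begin{equation*}
0\le\int_\Omega\lambda\bigl(f(w)-f(u_\lambda)\bigr)(w_1-u_\lambda)_+\,dx,
\end{equation*}
hence $\bigl(f(u_\lambda)-f(w)\bigr)(w_1-u_\lambda)_+=0$ a.e.; since $f(w)\le f(u_\lambda)$ is \emph{not} strict, this forces $w_1\le u_\lambda$ only on $\{w<u_\lambda\}$ and says nothing on $\{w=u_\lambda\}$. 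This is exactly the non-strict situation the paper is careful to avoid: every comparison it runs keeps a strictly positive margin (the $\delta H>0$ in Lemma~\ref{lem:boundbelow}, the gap $\lambda_1<\overline\lambda$ in Lemma~\ref{lem:n}). Your gap can be filled by inserting such a margin — minimize $\A(v)-\int_\Omega[(1-\delta)H+\lambda f(w)]v\,dx+\int_{\pa\Omega}|v|$ for $\delta>0$ so that $\delta H>0$ makes the comparison strict, then let $\delta\to 0$ — but as written that step is missing.

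Two smaller issues. You invoke ``the argument of Lemma~\ref{lem:eps}'' for $\overline H=H+\lambda f(w)$, but that lemma's proof explicitly uses $\overline H\in L^\infty(\Omega)$, which you do not have here ($w$ is only in $L^{p+1}$, and the uniform $L^\infty$ bound on $u_\lambda$ is what Lemma~\ref{lem:min1} is being used to prove). You do not need it: since $\lambda f(w)\le\lambda f(u_{\lambda'})$, combining \refe{eq:AA} for $H$ with Lemma~\ref{lem:A} for $u_{\lambda'}$ gives directly
\begin{equation*}
\int_A\overline H\,dx\le\Bigl(1-\tfrac{\lambda}{\lambda'}\Bigr)\int_A H\,dx+\tfrac{\lambda}{\lambda'}\int_A\bigl[H+\lambda'f(u_{\lambda'})\bigr]\,dx\le\Bigl[1-\Bigl(1-\tfrac{\lambda}{\lambda'}\Bigr)\eps_0\Bigr]P(A).
\end{equation*}
Finally, Theorem~\ref{thm:gia}(ii)--(iii) require the source to be Lipschitz, so you cannot conclude that $w_1\in\Lip(\Omega)$ with $w_1=0$ on $\pa\Omega$; this is harmless, since once $w_1=w$ a.e.\ is obtained you know that $w$ minimizes $\F^{[w]}_\lambda$ and $w=0$ on $\pa\Omega$ (from $0\le w\le u_\lambda$ and monotonicity of the trace), so Lemma~\ref{lem:min}(ii)$\Rightarrow$(i) already yields that $w$ is a weak solution, with no regularity of $w_1$ needed.
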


\begin{proof}
It is readily seen that  the functional $\J_\lambda$ admits a global minimizer $\tilde u_\lambda$ on $K_\lambda$. 
We are going to show that $\tilde u_\lambda=u_\lambda$ by proving, by recursion on $n$, that $\tilde u_\lambda\geq u_n$ for all $n$, where  $(u_n)$ is the sequence used to construct the minimal weak solution $u_\lambda$ in  the proof of Proposition~\ref{prop:min}, that is  $u_0=\underline{u}$ and  $I_n(u_n)=\min_{v\in \mathrm{BV}(\Omega)} I_n(v)$ with, we recall, 
$$I_n(v)=\A(v)-\ds\int_\Omega (H+\lambda f(u_{n-1}))v+\int_{\partial\Omega}|v|d\H^{N-1}.$$ 
\medskip

Set $u_{-1}=0$, so that $u_0=\underline{u}$ is the minimizer of $I_0$. Let $n\geq 0$. Applying Lemma~\ref{lem:comparison} to $\F_-=I_n$, $\F_+=\J_\lambda$, $K_-= \mathrm{BV}(\Omega)$, $K_+=K_\lambda$,  we obtain
\begin{equation}
0\leq \lambda\int_\Omega F(\tilde u_\lambda) - F(\max(u_n,\tilde u_\lambda))+f(u_{n-1})(\max(u_n,\tilde u_\lambda)-\tilde u_\lambda)\, dx.
\label{nVStilde}
\end{equation}
For $n=0$, \refe{nVStilde} reduces to:
\begin{equation*}
0\leq -\int_\Omega  F(\max(\underline {u},\tilde u_\lambda)) - F(\tilde u_\lambda) \, dx,
\end{equation*}
which implies  $\underline{u}\leq\tilde u_\lambda$ a.e. in $\Omega$ since $F$ is increasing.

For $n\geq 1$, assuming that we have proved that $u_{n-1}\leq\tilde u_\lambda$ a.e. in $\Omega$, we have $f(u_{n-1})\leq f(\tilde u_\lambda)$ and \refe{nVStilde} implies 
\begin{align*}
0&\leq -\int_\Omega  F(\max(u_n,\tilde u_\lambda))-F(\tilde u_\lambda)-f(\tilde u_\lambda )(\max(u_n,\tilde u_\lambda)-\tilde u_\lambda)\, dx \\
& =   - \int_\Omega  G(\tilde u_\lambda,\max(u_n,\tilde u_\lambda))\, dx.
\end{align*}
The strict convexity of $F$ implies $\tilde u_\lambda=\max(u_n,\tilde u_\lambda)$ and thus $u_n\leq\tilde u_\lambda$ a.e. in $\Omega$.
\smallskip

Passing to the limit $n\rightarrow \infty$, we deduce $u_\lambda \leq \tilde u_\lambda$ in $\Omega$
and thus $u_\lambda =\tilde u_\lambda$, which completes the proof that $u_\lambda$ is a one sided minimizer.
\medskip

Next, we note that if $\vphi$ is a non-positive smooth function satisfying $\vphi=0$ on $\pa\Omega$, then $\J_\lambda(u_\lambda+t\vphi)\geq \J_\lambda(u_\lambda)$ for all $t\geq 0$. Letting $t$ go to zero, and assuming that $u_\lambda\in\Lip(\Omega)$, we deduce
that the second variation $Q_\lambda(\vphi)$ is non-negative.
Since $Q_\lambda(\vphi)=Q_\lambda(-\vphi)$, it is readily seen that \refe{eq:ss} holds true for non-negative functions. Finally decomposing $\vphi$ into its positive and negative part, we deduce \refe{eq:ss} for any $\vphi$. 
\end{proof}

\subsection{$L^\infty$ estimate}

We now prove:
\begin{proposition}[$L^\infty$ estimate] Let $\lambda\in(0,\lambda^*)$. There exists a constant $C_1$ depending on $\lambda^{-1}$ and $\Omega$ such that 
the minimal weak solution $u_\lambda$ satisfies $\|u_\lambda\|_{L^\infty(\Omega)}\leq C_1.$
\label{prop:L1Linfty}
\end{proposition}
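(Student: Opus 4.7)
The plan is to derive the $L^\infty$ bound directly from the one-sided minimality of $u_\lambda$ established in Lemma \ref{lem:min1}, by testing against its truncations. For each $k\geq 0$, the function $v_k:=\min(u_\lambda,k)$ belongs to the comparison class $K_\lambda=\{v\in L^{p+1}\cap\mathrm{BV}(\Omega):0\leq v\leq u_\lambda\}$, so $\J_\lambda(u_\lambda)\leq\J_\lambda(v_k)$. Setting $A_k:=\{u_\lambda>k\}$ and using $u_\lambda-v_k=(u_\lambda-k)\chi_{A_k}$ together with $F(u_\lambda)-F(v_k)=[F(u_\lambda)-F(k)]\chi_{A_k}$, this rearranges to
$$[\A(u_\lambda)-\A(v_k)]+\lambda\int_{A_k}[F(u_\lambda)-F(k)]\,dx\leq\int_{A_k}H(u_\lambda-k)\,dx.$$

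The area functional is monotone under truncation, namely $\A(v_k)\leq\A(u_\lambda)$: for smooth $u_\lambda$ this is immediate from the pointwise bound $|\nabla v_k|\leq|\nabla u_\lambda|$, and it extends to $\mathrm{BV}$ by approximating $u_\lambda$ by smooth $u_\lambda^\varepsilon$ with $\A(u_\lambda^\varepsilon)\to\A(u_\lambda)$ (Anzellotti--Giaquinta) and combining the $L^1$-continuity of the truncation $T_k$ with the lower semi-continuity of $\A$. Dropping this non-negative term from the left-hand side, and invoking the convexity of $F(s)=s^{p+1}/(p+1)$ with $F'=f$ to write the tangent-line bound $F(u_\lambda)-F(k)\geq f(k)(u_\lambda-k)=k^p(u_\lambda-k)$ on $A_k$, we arrive at
$$\lambda k^p\int_{A_k}(u_\lambda-k)\,dx\leq\|H\|_{L^\infty(\Omega)}\int_{A_k}(u_\lambda-k)\,dx.$$

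If $|A_k|>0$, then $\int_{A_k}(u_\lambda-k)\,dx>0$ and division forces $\lambda k^p\leq\|H\|_{L^\infty(\Omega)}$, i.e.\ $k\leq(\|H\|_{L^\infty(\Omega)}/\lambda)^{1/p}$. Consequently $|A_k|=0$ for every $k>(\|H\|_{L^\infty(\Omega)}/\lambda)^{1/p}$, which yields
$$\|u_\lambda\|_{L^\infty(\Omega)}\leq\Bigl(\frac{\|H\|_{L^\infty(\Omega)}}{\lambda}\Bigr)^{1/p}=:C_1,$$
a constant depending only on $\lambda^{-1}$ and on $\|H\|_{L^\infty(\Omega)}$ (finite by \refe{eq:CC}). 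The only non-routine point in this chain of reasoning is the $\mathrm{BV}$ justification of $\A(v_k)\leq\A(u_\lambda)$; everything else is an elementary consequence of the convexity of $F$ and of the one-sided minimality supplied by Lemma \ref{lem:min1}. Note that no difficulty arises with boundary traces, since any $v\in K_\lambda$ automatically inherits the zero trace of $u_\lambda$ on $\partial\Omega$, so no boundary term enters the comparison.
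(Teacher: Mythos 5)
There is a fatal sign error at the very first displayed inequality, and it undermines the entire strategy. By definition, $\J_\lambda(u)=\A(u)-\int_\Omega Hu\,dx-\lambda\int_\Omega F(u)\,dx$, so the one-sided minimality $\J_\lambda(u_\lambda)\leq\J_\lambda(v_k)$ from Lemma~\ref{lem:min1} rearranges to
\begin{equation*}
\A(u_\lambda)-\A(v_k)\leq\int_{A_k}H(u_\lambda-k)\,dx+\lambda\int_{A_k}\bigl[F(u_\lambda)-F(k)\bigr]\,dx,
\end{equation*}
with the nonlinear term on the \emph{right} (as it appears in the paper's own proof), not on the left as you wrote. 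The reason is precisely that $-\lambda F(u)$ enters $\J_\lambda$ with a minus sign: truncating $u_\lambda$ makes $F$ \emph{smaller}, which \emph{increases} $\J_\lambda$, so the $F$-difference is an unfavorable term, not a favorable one. Once the sign is corrected, discarding the area difference (whether via your observation $\A(v_k)\leq\A(u_\lambda)$ or Gerhardt's sharper bound) only yields $0\leq\text{(nonnegative)}$, which carries no information, and your subsequent division to obtain $\lambda k^p\leq\|H\|_{L^\infty}$ collapses. Indeed your claimed bound $\|u_\lambda\|_\infty\leq(\|H\|_\infty/\lambda)^{1/p}$ is \emph{decreasing} in $\lambda$, while $\lambda\mapsto u_\lambda$ is nondecreasing with $u_\lambda\geq\underline u$; for $\lambda$ near $\lambda^*$ this would contradict $u_\lambda\geq\underline u$ whenever $\lambda^*$ is moderately large, so the conclusion cannot be right.

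The substantive content of the paper's argument, which you have skipped, is exactly the struggle against this unfavorable $F$-term. The paper keeps the stronger Gerhardt lower bound $\int_\Omega|Dw_k|-|\{u>k\}|\leq\A(u)-\A(v_k)$ to retain the good term $\int_\Omega|Dw_k|$; it then invokes Lemma~\ref{lem:A} (via the coarea formula) to absorb $\int_\Omega(H+\lambda f(u))w_k\,dx$ into $\int_\Omega|Dw_k|$; and it exploits $f'(s)\geq 1$ for $s\geq 1$ to extract a strictly positive contribution $\lambda\int(u-k)_+^2$ from the difference $f(u)-\int_0^1 f(su+(1-s)v_k)\,ds$. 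That produces $L^q$ bounds for all $q$, and a Poincar\'e--Stampacchia iteration then gives $L^\infty$. None of these steps is optional: the $F$-term works against you, and one needs both the extra gain from the area functional and the structural hypothesis on $f'$ to defeat it. Your observation that $\A(\min(u,k))\leq\A(u)$ on $\mathrm{BV}$ is correct but, on its own, too weak to carry the estimate.
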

Note that this implies  Proposition~\ref{prop:infty}:  Proposition~\ref{prop:L1Linfty} gives the existence of  $C$ depending only on $\Omega$  such that $\|u_\lambda\|_{L^\infty(\Omega)}\leq C$ for every $\min(1,\lambda^*/2)\leq\lambda<\lambda^*$. And since $0\leq u_\lambda\leq u_{\lambda'}$ if $\lambda<\lambda'$, the inequality is also satisfied when $0\leq\lambda\leq\min(1,\lambda^*/2)$.

\begin{proof}
This proof is essentially a variation of the proof of Theorem~2.2 in Giusti  \cite{Giusti76}.
We fix $\lambda\in(0,\lambda^*)$ and set $u=u_\lambda$.

For some fixed $k>1$, we set $v_k=\min(u,k)$ and $w_k=u-v_k=(u-k)_+$. The difference between the areas of the graphs of $u$ and $v_k$ can be  estimated by below as follows (\cite{Gerhardt74}):
\begin{equation*}
\ds\int_\Omega |Dw_k|-|\{u>k\}|\leq\A(u)-\A(v_k).
\end{equation*}
On the other hand, since $0\leq v_k\leq u$, Lemma~\ref{lem:min1} gives $\J_\lambda(u)\leq \J_\lambda(v_k)$, which implies
\begin{equation*}
\A(u)-\A(v_k)\leq\ds\int_\Omega H(u-v_k)+\lambda [ F(u)-F(v_k)]\, dx.
\end{equation*}
Writing
$$F(u)-F(v_k) = \int_0^1 f(su+(1-s)v_k)\, ds \, (u-v_k),$$
we deduce the following inequality
\begin{equation}
\ds\int_\Omega |Dw_k|\leq|\{u>k\}|+\ds\int_\Omega \left(H+\lambda\int_0^1 f(su+(1-s)v_k)\, ds \right)\, w_k\, dx.
\label{ineqw}\end{equation}
\smallskip

First, we will show that  \refe{ineqw} implies the following estimate:
\begin{equation}
\|u\|_{L^q(\Omega)}\leq C_1(q),
\label{estimLp}\end{equation}
for every $q\in [1,+\infty)$, where $C_1(q)$ depends on $q,\Omega,\lambda^{-1}$.

Indeed, by Lemma~\ref{lem:A}, we have $\ds\int_A H+\lambda f(u) \, dx\leq P(A)$ for all finite perimeter subset $A$ of $\Omega$. We deduce (using the coarea formula):
\begin{align*}
\ds\int_\Omega (H+\lambda f(u))w_k\, dx&=\ds\int_0^{+\infty}\ds\int_{\{w_k>t\}} H+\lambda f(u)\, dx \,dt\\
&\leq\ds\int_0^{+\infty}P(w_k>t) dt\\
&\leq \ds\int_\Omega |Dw_\lambda|.
\end{align*}
So \refe{ineqw} becomes
\begin{equation*}
0\leq |\{u>k\}|-\ds\lambda \int_{\{u\geq k\}} \left[f(u)- \int_0^1 f(su+(1-s)v_k)\, ds\right]w_k\, dx.
\end{equation*}
Since $u\geq 1 $ and $v_k\geq 1$ on $\{u\geq k\}$, and since $f'(s)\geq 1$ for $s\geq 1$,  we have
\begin{align*} 
f(u) & \geq   f(su+(1-s)v_k) + (u-su-(1-s)v_k) \\
& =  f(su+(1-s)v_k) + (1-s)(u-v_k)
\end{align*} 
on $\{u\geq k\}$. We deduce (recall that  $w_k=u-v_k=(u-k)_+$):
\begin{equation*}
\ds\int_\Omega [(u-k)_+]^2\, dx\leq \ds\frac{2}{\lambda}|\{u>k\}|,
\end{equation*}
which implies, in particular, \refe{estimLp} for $q=2$.  
Furthermore, integrating this inequality with respect to $k\in (k',+\infty)$, we get:
\begin{equation*}
\ds\int_\Omega [(u-k)_+]^3\, dx\leq \ds3\cdot \frac{2}{\lambda}\int_\Omega (u-k)_+\, dx,
\end{equation*}
and
by repeated integration  we obtain:
\begin{equation*}
\ds\int_\Omega [(u-k)^+]^{q}\, dx\leq q(q-1)\ds\frac{1}{\lambda}\ds\int_\Omega [(u-k)^+]^{q-2}\, dx
\end{equation*}
for every $q\geq 3$, which implies \refe{estimLp} by induction on $q$.
\bigskip

Note however, that the constant $C_1(q)$ blows up as $q\rightarrow \infty$, and so we cannot obtain the  $L^\infty$ estimate that way.
We thus go back to  \refe{ineqw}: using Poincar\'e's inequality for $\mathrm{BV}(\Omega)$ functions which vanish on $\partial\Omega$ and \refe{ineqw}, we get
\begin{align*}
\|w_k\|_{L^{\frac{n}{n-1}}(\Omega)}&\leq C(\Omega)\ds\int_\Omega|Dw_k|\\
& \leq  C(\Omega)\left(|\{u>k\}|+\ds\int_\Omega \left(H+\lambda f(u)\right)w_k\right)\\
&\leq C(\Omega) \left( |\{u>k\}|+
\|H+\lambda f(u)\|_{L^n(\{w_k>0\})}
\|w_k\|_{L^{\frac{n}{n-1}}(\Omega)} \right)
\end{align*}
Inequality \refe{estimLp} implies in particular that $H+\lambda f(u)\in L^n(\Omega)$ (with bound depending on $\Omega, \lambda^{-1}$), so there exists $\eps>0$ such that $C(\Omega)\|H+\lambda f(u)\|_{L^n(A)}\leq 1/2$ for any subset $A\subset\Omega$ with $|A|<\eps$. Moreover,  Lemma~\ref{lem:A} gives  $\|u\|_{L^1(\Omega)}\leq P(\Omega)/\lambda$ and therefore 
\begin{equation*}
|\{w_k>0\}|=|\{u>k\}|\leq \frac{1}{k}\frac{P(\Omega)}{\lambda}.
\end{equation*}
It follows that there exists $k_0$ depending on $\Omega, \lambda^{-1}$ such that 
$$C(\Omega)\|H+\lambda f(u)\|_{L^n(\{w_k>0\})}\leq 1/2$$
for $k\geq k_0$. For $k\geq k_0$, we deduce
\begin{equation*}
\|w_k\|_{L^{\frac{n}{n-1}}(\Omega)}=\|(u-k)_+\|_{L^{\frac{n}{n-1}}(\Omega)}\leq 2C(\Omega) |\{u>k\}|.
\end{equation*}
Finally, for $k'>k$, we have $1_{|\{u>k'\}}\leq \left(\frac{ (u-k)_+}{k'-k}\right)^{\frac{n}{n-1}}$ and so
$$
|\{u >k'\}|  \leq  \frac{1}{(k'-k)^{\frac{n}{n-1}}} \|(u-k)_+\|_{L^{\frac{n}{n-1}}(\Omega)} ^{\frac{n}{n-1}}
 \leq   \frac{2C(\Omega)}{(k'-k)^{\frac{n}{n-1}}}|\{u>k\}| ^{\frac{n}{n-1}}
$$
which implies, by classical arguments (see \cite{Stampacchia65}) that $|\{u_\lambda >k\}|$ is zero for $k$ large (depending on $|\Omega|$ and $\lambda^{-1}$). The proposition follows.
\end{proof}

\vspace{20pt}
As a consequence, we have:
\begin{corollary}\label{cor:BV}
There exists a constant $C$ depending only on $\Omega$ and $H$ such that
$$ \int_\Omega |Du_\lambda|\leq C.$$
\end{corollary}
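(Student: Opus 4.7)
The plan is to chain three ingredients already available: the global minimizer characterization of weak solutions (Lemma~\ref{lem:min}(ii)), the Lemma~\ref{lem:A} estimate applied to the whole domain, and the uniform $L^\infty$ bound from Proposition~\ref{prop:infty}. The main point is that an $L^\infty$ bound converts the linear-in-$u$ growth on the right-hand side of the energy inequality into an absolute constant, and the BV norm is controlled by the area $\A(u_\lambda)$ via \eqref{eq:AreaVariation}.

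Concretely, I would proceed as follows. First, apply Lemma~\ref{lem:min}(ii) with $v=0$ to the minimal weak solution $u_\lambda$: this gives
\begin{equation*}
\A(u_\lambda)\leq \A(0)+\int_\Omega (H+\lambda f(u_\lambda))\,u_\lambda\, dx = |\Omega|+\int_\Omega (H+\lambda f(u_\lambda))\,u_\lambda\, dx.
\end{equation*}
Second, bound the integral on the right: since $u_\lambda\geq 0$, use $u_\lambda\leq \|u_\lambda\|_{L^\infty}$ to factor out, then apply Lemma~\ref{lem:A} with $A=\Omega$ to get
\begin{equation*}
\int_\Omega (H+\lambda f(u_\lambda))\,u_\lambda\, dx \leq \|u_\lambda\|_{L^\infty(\Omega)}\int_\Omega (H+\lambda f(u_\lambda))\, dx \leq \|u_\lambda\|_{L^\infty(\Omega)}\, P(\Omega).
\end{equation*}
Third, invoke Proposition~\ref{prop:infty} to replace $\|u_\lambda\|_{L^\infty(\Omega)}$ by a constant depending only on $\Omega$ and $H$, and finally use the left inequality of \eqref{eq:AreaVariation} to pass from $\A(u_\lambda)$ to $\int_\Omega |Du_\lambda|$.

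There is no real obstacle here; this is essentially a bookkeeping step whose content is already encoded in Lemmas~\ref{lem:min}, \ref{lem:A} and Proposition~\ref{prop:infty}. The only minor point worth flagging is that the argument requires $u_\lambda\geq 0$ (to factor out the $L^\infty$ norm without creating a sign error in the $H u_\lambda$ term); this is built into Definition~\ref{defsol} of weak solution, so no additional work is needed.
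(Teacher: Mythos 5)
Your proof is correct and follows essentially the same route as the paper: apply Lemma~\ref{lem:min}(ii) with $v=0$, use the uniform $L^\infty$ bound, and pass from $\A(u_\lambda)$ to $\int_\Omega|Du_\lambda|$ via \eqref{eq:AreaVariation}. The only cosmetic difference is that you bound $\int_\Omega (H+\lambda f(u_\lambda))\,u_\lambda\,dx$ by factoring out $\|u_\lambda\|_{L^\infty}$ and invoking Lemma~\ref{lem:A} with $A=\Omega$, whereas the paper bounds $\|H+\lambda f(u_\lambda)\|_{L^\infty}$ directly from the $L^\infty$ bound on $u_\lambda$ and $\lambda\leq\lambda^*$; both give a constant depending only on $\Omega$ and $H$.
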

\begin{proof}
By Lemma \ref{lem:min} (ii) and Proposition \ref{prop:L1Linfty}, we get:
\begin{align*} 
\mathcal A (u_\lambda) & \leq  \mathcal A(v) - \int_\Omega (H+\lambda f(u_\lambda))v \, dx+ \int_\Omega (H+\lambda f(u_\lambda))u_\lambda\, dx\\
& \leq  \mathcal A(v)+C\int_\Omega |v|\, dx +C
\end{align*}
for any function $v\in L^{p+1}\cap\mathrm{BV}(\Omega)$ such that $v=0$ on $\pa\Omega$. Taking $v=0$, the result follows immediately.
\end{proof}


\section{Existence of the extremal solution}\label{sec:existence2}

We can now complete the proof of Theorem \ref{mainthm}. The only missing piece is the existence of a weak solution for $\lambda=\lambda^*$, which is given by the following proposition:

\begin{proposition}\label{prop:*}
There exists a function $u^*\in L^{p+1}(\Omega)\cap \mathrm{BV}(\Omega)$ such that
\begin{equation*}
u_\lambda \to u^*\quad  \mbox{ in } L^{p+1}(\Omega)\quad \mbox{ as }\lambda\rightarrow\lambda ^*.
\end{equation*} 
Furthermore, $u^*$ is a weak solution of (\ref{eq:1}) for $\lambda=\lambda^*$.
\end{proposition}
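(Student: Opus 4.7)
The plan is to define $u^*$ as the pointwise monotone limit of $u_\lambda$, to deduce its $L^{p+1}\cap\mathrm{BV}$ regularity from the uniform estimates already obtained, to propagate the homogeneous boundary condition to the limit, and then to pass to the limit in the variational characterization of weak solutions given by Lemma~\ref{lem:min}~(ii).

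Since $\lambda\mapsto u_\lambda$ is nondecreasing by Proposition~\ref{prop:2}~(iii), I would set
\begin{equation*}
u^*(x):=\lim_{\lambda\uparrow\lambda^*}u_\lambda(x)=\sup_{\lambda<\lambda^*}u_\lambda(x),\qquad\text{a.e. }x\in\Omega.
\end{equation*}
The uniform $L^\infty$ bound from Proposition~\ref{prop:infty} together with dominated convergence promotes this pointwise convergence to $L^q(\Omega)$ convergence for every $q\in[1,\infty)$; in particular $u_\lambda\to u^*$ in $L^{p+1}(\Omega)$ with $u^*\in L^\infty(\Omega)$ and $u^*\geq 0$. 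The uniform $\mathrm{BV}$ bound from Corollary~\ref{cor:BV} combined with the $L^1$-lower semicontinuity of the total variation yields $u^*\in\mathrm{BV}(\Omega)$.

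To propagate the condition $u^*=0$ on $\pa\Omega$, I would extend each $u_\lambda$ (and $u^*$) by zero outside $\Omega$. Since $u_\lambda$ has zero trace, its extension $\tilde u_\lambda$ lies in $\mathrm{BV}(\R^n)$ with $|D\tilde u_\lambda|(\R^n)=\int_\Omega|Du_\lambda|\leq C$. The $L^1(\R^n)$ convergence $\tilde u_\lambda\to\tilde u^*$ together with the lower semicontinuity of the total variation on $\R^n$ then yields $\tilde u^*\in\mathrm{BV}(\R^n)$, which forces the trace of $u^*$ on $\pa\Omega$ to vanish.

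For the final step, for each $\lambda<\lambda^*$ and every $v\in L^{p+1}\cap\mathrm{BV}(\Omega)$, Lemma~\ref{lem:min}~(ii) gives
\begin{equation*}
\A(u_\lambda)-\int_\Omega(H+\lambda f(u_\lambda))u_\lambda\,dx\leq \A(v)-\int_\Omega(H+\lambda f(u_\lambda))v\,dx+\int_{\pa\Omega}|v|\,d\H^{n-1}.
\end{equation*}
The uniform $L^\infty$ bound and $L^{p+1}$ convergence imply $f(u_\lambda)\to f(u^*)$ in $L^{(p+1)/p}(\Omega)$, so every integral passes to the limit $\lambda\uparrow\lambda^*$, while the $L^1$-lower semicontinuity of $\A$ yields $\A(u^*)\leq\liminf\A(u_\lambda)$ on the left-hand side. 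The resulting inequality is exactly \refe{eq:almostVariii0} at level $\lambda^*$; together with $u^*\geq 0$ and the boundary condition established above, Lemma~\ref{lem:min} identifies $u^*$ as a weak solution of $(P_{\lambda^*})$. The most delicate point is the boundary trace, since the trace map is not continuous under weak-$*$ $\mathrm{BV}$ convergence; the extension-by-zero argument bypasses this by exploiting the homogeneous Dirichlet data satisfied by each $u_\lambda$.
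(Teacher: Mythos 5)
Your construction of $u^*$ via monotone convergence, the $L^\infty$ and $L^{p+1}$ convergence, the BV bound, and the passage to the limit in the variational inequality of Lemma~\ref{lem:min}~(ii) are all correct and match the paper's strategy. However, your argument for the boundary condition $u^*=0$ on $\pa\Omega$ has a genuine gap.

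You claim that $\tilde u^*\in\mathrm{BV}(\R^n)$ ``forces the trace of $u^*$ on $\pa\Omega$ to vanish,'' but this implication is false. Any $w\in\mathrm{BV}(\Omega)$ on a bounded Lipschitz domain has an extension by zero $\tilde w\in\mathrm{BV}(\R^n)$, with
\begin{equation*}
|D\tilde w|(\R^n)=\int_\Omega|Dw|+\int_{\pa\Omega}|\mathrm{tr}\,w|\,d\H^{n-1},
\end{equation*}
so membership in $\mathrm{BV}(\R^n)$ carries no information about the trace. Lower semicontinuity along your sequence only gives $|D\tilde u^*|(\R^n)\leq\liminf\int_\Omega|Du_\lambda|$, which combined with $\int_\Omega|Du^*|\leq\liminf\int_\Omega|Du_\lambda|$ tells you nothing about $\int_{\pa\Omega}|\mathrm{tr}\,u^*|$. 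To make the extension-by-zero idea work you would need the \emph{strict} convergence $\int_\Omega|Du_\lambda|\to\int_\Omega|Du^*|$ (or $\A(u_\lambda)\to\A(u^*)$), and that is precisely what the paper establishes through a two-sided argument: lower semicontinuity gives $\J_{\lambda^*}(u^*)\leq\liminf\J_\lambda(u_\lambda)$, while Lemma~\ref{lem:min}~(iii) applied with $v=u^*$ gives $\limsup\J_\lambda(u_\lambda)\leq\J_{\lambda^*}(u^*)$, yielding $\A(u_\lambda)\to\A(u^*)$ and hence continuity of the trace under strict BV convergence. Your proof is missing this $\limsup$ estimate, which is the essential ingredient; without it, the zero-trace claim for $u^*$ is unjustified.
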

\begin{proof}
Recalling that the sequence $u_\lambda$ is non-decreasing with respect to $\lambda$, it is readily seen that Proposition \ref{prop:infty} implies the existence of a function $u^*\in L^\infty(\Omega)$ such that 
\begin{equation*}
\lim_{\lambda\rightarrow \lambda^*} u_\lambda(x) = u^*(x).
\end{equation*}
Furthermore, by the Lebesgue dominated convergence theorem, $u_\lambda$ converges to $u^*$ strongly in $L^q(\Omega)$ for all $q\in[1,\infty)$.

Next, by lower semi-continuity of the area functional $\A(u)$ and Corollary~\ref{cor:BV}, we have
\begin{equation*}
\A(u^*) \leq \liminf_{\lambda\rightarrow \lambda^*}\A(u_\lambda) <\infty.
\end{equation*} 
So, if we write 
\begin{equation*}
\lambda \int F(u_\lambda)\, dx-\lambda^* \int F({u^*})\, dx =(\lambda-\lambda^*) \int F(u_\lambda)\, dx +\lambda^* \int F(u_\lambda)-F({u^*})\, dx,
\end{equation*} 
it is  readily seen that
\begin{equation*}
 \J_{\lambda^*}(u^*)\leq \liminf_{\lambda\rightarrow \lambda^*} \J_\lambda(u_\lambda).
\end{equation*} 

Furthermore, Lemma \ref{lem:min} yields
\begin{equation*}
\J_\lambda(u_\lambda) \leq \J_\lambda(u^*)+\lambda\int_\Omega G(u_\lambda,u^*)\, dx
\end{equation*} 
and so (using the strong $L^{p+1}$ convergence of $u_\lambda$):
\begin{equation*}
 \limsup_{\lambda\rightarrow \lambda^*} \J_\lambda(u_\lambda)\leq  \J_{\lambda^*}(u^*).
 \end{equation*} 
We deduce the convergence of the functionals:
\begin{equation*}
 \J_{\lambda^*}(u^*) =  \lim_{\lambda\rightarrow \lambda^*} \J_\lambda(u_\lambda)
\end{equation*}
which implies in particular that 
\begin{equation*}
\A(u_\lambda)\to\A(u^*)
\end{equation*}
and so $u_\lambda \rightarrow u^*$ in $L^1(\pa\Omega)$. It follows that $u^*$ satisfies the boundary condition $u^*=0$ on $\Omega$.

Finally, using  Lemma \ref{lem:min} again, we have, for any $v\in L^{p+1}\cap \mathrm{BV}(\Omega)$ with $v=0$ on $\pa\Omega$:
\begin{equation*}
 \J_\lambda(u_\lambda)\leq \J_\lambda(v)+\lambda\int_\Omega G(u_\lambda,v)\, dx
\end{equation*}
which yields, as $\lambda\to\lambda^*$:
\begin{equation*}
 \J_{\lambda^*}(u^*)\leq \J_{\lambda^*}(v)+\lambda^*\int_\Omega G(u^*,v)\, dx.
\end{equation*}
for any $v\in L^{p+1}\cap \mathrm{BV}(\Omega)$ with $v=0$ on $\pa\Omega$.
Lemma \ref{lem:min} implies that $u^*$ is a weak solution of (\ref{eq:1}) for $\lambda=\lambda^*$.
\end{proof}

\bigskip


\section{Regularity of the minimal solution in the radial case}\label{sec:reg}
\subsection{Proof of Theorem \ref{thm:rad}}

Throughout this section, we assume that $\Omega=B_R$ and that $H$ depends on $r=|x|$ only. Then, for any rotation $T$ that leaves $B_R$ invariant, we see that the function  $u^T_\lambda(x)=u_\lambda(Tx)$ is a weak solution of \refe{eq:1}, and the minimality of $u_\lambda$ implies
$$ u_\lambda \leq u_\lambda^T \mbox{ in } \Omega. $$
Taking the inverse rotation $T^{-1}$, we get the opposite inequality and so $u^T_\lambda=u_\lambda$, i.e. $u_\lambda$ is radially (or spherically) symmetric.
Furthermore, equation (\ref{eq:1}) reads:
\begin{equation}\label{eq:1rad}
-\frac{1}{r^{n-1}} \frac{d}{dr}\left(\frac{r^{n-1}u_r}{(1+u_r^2)^{1/2}}\right) = H+\lambda f(u).
\end{equation}
or
\begin{equation}\label{eq:1rad2}
-\left[ \frac{u_{rr}}{(1+u_r^2)^{3/2}} + \frac{n-1}{r} \frac{u_r}{(1+u_r^2)^{1/2}} \right] = H+\lambda f(u)
\end{equation}
together with the boundary conditions
\begin{equation*}
u_r(0)=0,\qquad u(R)=0.
\end{equation*}
Note that, by integration of \refe{eq:1rad} over $(0,r)$, $0<r<R$, we obtain
\begin{equation}
\frac{-r^{n-1}u_r(r)}{(1+u_r(r)^2)^{1/2}}=\int_0^r [H+\lambda f(u)] r^{n-1}dr,
\label{signur}\end{equation}
which gives $u_r\leq 0$, provided $u$ is Lipschitz continuous in $\Omega$ at least.
\medskip

It is classical that the solutions of \refe{eq:laplace} can blow up at $r=0$. In our case however, the functions $u_\lambda$ are bounded in $L^\infty$. We deduce the following result:
\begin{lemma}[Bound on the gradient near the origin]\label{lem:regint}
There exists $r_1\in(0,R)$ and $C_1>0$ such that for any $\lambda\in[0,\lambda^*]$, we have
$$ |\na u_\lambda(x)| \leq C_1 \mbox{ for a.a. $x$ such that $|x|\leq r_1$.}$$
\end{lemma}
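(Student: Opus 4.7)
The plan is to use the uniform $L^\infty$ bound established in Proposition~\ref{prop:infty0} to control the ``radial flux'' $-r^{n-1}u_r/\sqrt{1+u_r^2}$ through an integrated form of \refe{eq:1rad}, and then to observe that this flux is, for $r$ near~$0$, strictly smaller in modulus than its saturation value $r^{n-1}$. This last fact forces a Lipschitz bound on $u_r$ by a direct algebraic manipulation, and it also precludes any BV-type jumps in the small ball.

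By Proposition~\ref{prop:infty0}, $\|u_\lambda\|_{L^\infty(\Omega)}\leq C_0$ uniformly in $\lambda\in[0,\lambda^*]$. Set
$$
g_\lambda:=H+\lambda f(u_\lambda),\qquad M:=\|H\|_{L^\infty(\Omega)}+\lambda^* C_0^p,
$$
so that $\|g_\lambda\|_{L^\infty(\Omega)}\leq M$ independently of $\lambda$ and the Lipschitz function $\Phi_\lambda(r):=\int_0^r s^{n-1}g_\lambda(s)\,ds$ satisfies $|\Phi_\lambda(r)|\leq Mr^n/n$. The heart of the argument is to justify the radial identity \refe{signur} at the $\mathrm{BV}$ level. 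To do so, test \refe{P1} against smooth radial functions $\varphi$ compactly supported in $\Omega$: since such $\varphi$ can be used with either sign, the inequality in \refe{P1} becomes an equality and the divergence structure of the equation yields, in the sense of distributions on $(0,R)$,
$$
\frac{d}{dr}\!\left(-\frac{r^{n-1}u_r}{\sqrt{1+u_r^2}}\right)=r^{n-1}g_\lambda,
$$
with vanishing value at $r=0$. Consequently, the distributional flux coincides with $\Phi_\lambda$, and $|u_r|/\sqrt{1+u_r^2}\leq Mr/n$ a.e.\ in~$(0,R)$.

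Choosing $r_1:=n/(2M)$, for $r\leq r_1$ one then has $u_r^2/(1+u_r^2)\leq 1/4$, which rearranges to $|u_r(r)|\leq 1/\sqrt{3}=:C_1$ a.e.; since $|\nabla u_\lambda(x)|=|u_r(|x|)|$ in the radial case, this is exactly the claimed bound.

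The main obstacle is the rigorous justification of the flux identity for a mere $\mathrm{BV}$ weak solution, in which $u_r$ a priori could carry a singular (jump or Cantor) part. The point is that, at any potential jump of $u_\lambda$ at some $r_0\in(0,R)$, the ``radial flux'' would have to take the boundary value $\pm r_0^{n-1}$, whereas $|\Phi_\lambda(r_0)|\leq Mr_0^n/n<r_0^{n-1}$ whenever $r_0<n/M$. Exploiting the 1D structure coming from the radial reduction, this strict inequality rules out both jumps and concentrated singular parts inside $B_{r_1}$, so that $u_r$ is indeed a bona fide $L^\infty$ function there and the computation above applies.
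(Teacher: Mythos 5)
Your proposal contains a genuine gap in the step where you claim the flux identity for a $\mathrm{BV}$ weak solution. You write that testing \refe{P1} against smooth radial $\varphi$ with compact support, ``since such $\varphi$ can be used with either sign, the inequality in \refe{P1} becomes an equality.'' This reasoning would be valid if ${\mathcal L}(u)$ were linear, i.e. if ${\mathcal L}(u)(-\varphi)=-{\mathcal L}(u)(\varphi)$, since then the two one-sided inequalities would squeeze to an equality. But the paper explicitly stresses, right after Proposition~\ref{prop:Aprime}, that ${\mathcal L}(u)$ is \emph{not} linear for a general $u\in\mathrm{BV}(\Omega)$: both ${\mathcal L}(u)(\varphi)\geq\int g\varphi$ and ${\mathcal L}(u)(-\varphi)\geq-\int g\varphi$ can hold with strict inequality (this is exactly what happens when $\nabla u$ carries a singular part that is ``seen'' by $\varphi$). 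So you have derived the identity $\frac{d}{dr}(-r^{n-1}u_r/\sqrt{1+u_r^2})=r^{n-1}g_\lambda$ only under the tacit assumption that $u_\lambda$ is already Lipschitz, which is the conclusion you want, not a hypothesis you have. Your final paragraph acknowledges this (``the main obstacle is the rigorous justification of the flux identity for a mere $\mathrm{BV}$ weak solution'') and sketches a no-jump argument, but as written it is circular: the bound $|\Phi_\lambda(r)|\leq Mr^n/n$ is supposed to preclude jumps, yet $\Phi_\lambda$ was identified with the flux only \emph{via} the very equality whose validity a jump would destroy. A correct version of this approach would need an independent analysis of what the weak formulation \refe{P1} forces on the singular part of $Du_\lambda$ at a radius where a jump might occur, which is a nontrivial piece of $1$D/radial $\mathrm{BV}$ machinery that you do not supply.

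The paper sidesteps the issue entirely. It carries out exactly the same integration-of-\refe{eq:1}-over-$B_r$ computation, but on the iterates $u_n$ from the proof of Proposition~\ref{prop:min}, which are genuine Lipschitz (hence classical) solutions of $-\div(Tu_n)=H+\lambda f(u_{n-1})$ by construction, satisfy $\underline u\leq u_n\leq u_\lambda$, and therefore inherit the uniform $L^\infty$ bound. For those, the flux identity is rigorous, one obtains $|{(u_n)}_r|\leq C_1$ for $r\leq r_1$ uniformly in $n$ and $\lambda$, and the monotone convergence $u_n\uparrow u_\lambda$ lets one pass the Lipschitz bound to the limit. If you want to keep your approach, you would either need to reproduce this approximation scheme, or else prove directly that \refe{P1} forbids singular parts of $Du_\lambda$ in $B_{r_1}$ before you are allowed to read \refe{signur} as an equality.
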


\begin{proof}
First, we assume that  $u_\lambda$ is smooth.
Then, integrating \refe{eq:1} over $B_r$, we get:
$$ \int_{\pa B_r} \frac{\na u_\lambda \cdot \nu}{\sqrt{1+|\na u_\lambda |^2}} \, dx = \int_{B_r} H+\lambda f(u_\lambda)\, dx.$$
Since $u_\lambda$  is spherically symmetric, this implies:
\begin{equation}\label{eq:PP}
   \frac{ |({u_\lambda})_r| }{\sqrt{1+|({ u_\lambda})_r |^2}} (r) =\frac{1}{P(B_r)} \int_{B_r} H+\lambda f(u_\lambda)\, dx 
\end{equation}
and the $L^\infty$ bound on $u_\lambda$ yields:
$$  \frac{ |({ u_\lambda})_r| }{\sqrt{1+|({ u_\lambda})_r |^2}} (r) \leq C \frac{|B_r|}{P(B_r)} \leq Cr.$$
In particular, there exists $r_1$ such that $Cr\leq 1/2$ for $r\leq r_1$ and so
\begin{equation}\label{eq:bdr0} 
|(u_\lambda)_r|(r) \leq C_1 \qquad \mbox{ for } r\leq r_1.
\end{equation}
Of course, these computations are only possible if we already know that $u_\lambda$ is a classical solution of \refe{eq:1}.
However, it is always possible to perform the above computations with the sequence $(u_n)$ used in the proof of Proposition \ref{prop:min} to construct $u_\lambda$.
In particular, we note that we have $\underline u\leq u_n\leq u_\lambda $ for all $n$ and 
$$ 
-\mbox{div} (Tu_n) = H+\lambda f(u_{n-1}) \mbox{ in } \Omega
$$
so the same proof as above implies that there exists a constant $C$ independent of $n$ or $\lambda$ such that
$$ 
|\na u_n|\leq C_1  \mbox{ for all $x$ such that $|x|\leq r_1$.}$$
The lemma follows by taking the limit $n\to \infty$ (recall that the whole sequence $u_n$ converges in a monotone fashion to $u_\lambda$).
\end{proof}
\vspace{10pt}

\begin{proof}[Proof of Theorem \ref{thm:rad}]
We now want to prove the gradient estimate \refe{eq:gradrad}.
Thanks to Lemma \ref{lem:regint}, we only have to show the result for $r\in[r_1,R]$.
We denote $u^*=u_{\lambda^*}$.
Since $u^* $ is a weak solution of (\ref{eq:1}), Lemma \ref{lem:A} with $A=B_r$ ($r\in[0,R]$) implies
$$ \int_{B_r} H+\lambda^* f(u^*)\, dx \leq P(B_r)$$
and so, using the fact that $u^*\geq u_\lambda\geq \underline u$, we have
$$ \int_{B_r} H+\lambda f(u_\lambda)\, dx \leq P(B_r) -\int_{B_r} (\lambda^*-\lambda) f(u_\lambda)\leq P(B_r) - (\lambda^*-\lambda) \int_{B_r} f(\underline u)\, dx.$$
Hence \refe{eq:PP} becomes:
$$  \frac{ |({ u_\lambda})_r| }{\sqrt{1+|({ u_\lambda})_r |^2}} (r) \leq 1- \frac{(\lambda^*-\lambda)}{P(B_r)} \int_{B_r} f(\underline u)\, dx.$$
For $r\in(r_1,R)$, we have 
$$  \frac{(\lambda^*-\lambda)}{r^{n-1}} \int_{B_r} f(\underline u)\, dx \geq (\lambda^*-\lambda) \delta>0$$
for some universal $\delta$
and so 
$$ 
|(u_\lambda)_r|(r) \leq \frac{C}{\lambda^*-\lambda} \qquad \mbox{ for } r\in [r_1,R].
$$
Together with \refe{eq:bdr0}, this gives the result.
\smallskip

Note once again that these computations can only be performed rigorously on the functions $(u_n)$, which satisfy in particular $\underline u\leq u_n\leq u^*$ for all $n$. So \refe{eq:gradrad} holds for $u_n$ instead of $u_\lambda$. The result follows by passing to the limit $n\to \infty$.
\end{proof}

\begin{remark}
  We point out that the Lipschitz regularity near the origin $r=0$ is a consequence of the $L^\infty$ estimate (it is in fact enough to have $f(u_\lambda)\in L^n$), while the gradient estimate away from the origin only requires $f(u_\lambda)$ to be integrable.
\end{remark}

\subsection{Regularity of the extremal solution}

In this section, we prove Theorem \ref{thm:d2}, that is the regularity of the extremal solution $u^*$.
The proof is divided in two parts: boundary regularity and interior regularity.

\subsubsection{Boundary regularity}
We have the following  a priori estimate:
\begin{lemma}[Bound on the gradient at the boundary]\label{lem:bd2}
Assume that $\Omega=B_R$, that $H$ depends on $r$ only and that conditions \refe{eq:AA}, \refe{eq:BB} and \refe{eq:CC} are fulfilled. 
Let  $u$ be any classical solution of (\ref{eq:1}). 
Then there exists a constant $C$ depending only on  $R$, $\eps_0$ and $n$ such that
$$|u_r (R)| \leq C(1+\lambda).$$
\end{lemma}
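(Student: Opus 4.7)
The plan is to construct a radial barrier from above in a thin annulus near $\pa B_R$, and to deduce the gradient bound at $r=R$ by comparison. Concretely, I want a radial function $v$ defined on $[R-\delta,R]$ with $v(R)=0$, $v\geq u$ on the closed annulus, and $-\mathrm{div}(Tv)\geq H+\lambda f(v)$ there; the comparison principle then forces $v\geq u$ on the annulus and hence $|u_r(R)|\leq |v_r(R)|$. The target is to produce such a $v$ with $|v_r(R)|\leq C(1+\lambda)$.

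The starting point is the strict inequality $H(R)\leq (1-\eps_0)(n-1)/R$ from \refe{eq:BB}; combined with the Lipschitz continuity of $H$ in \refe{eq:CC}, this yields a fixed $\delta_0>0$ (depending only on $R$, $n$, $\eps_0$ and $\Lip(H)$) such that
\[
H(r)\leq (1-\eps_0/2)\frac{n-1}{R}\qquad\text{for all }r\in[R-\delta_0,R].
\]
On such an annulus I consider the linear ansatz $v(r)=\gamma(R-r)$, for which $-\mathrm{div}(Tv)(r)=(n-1)\gamma/(r\sqrt{1+\gamma^2})$. For $\gamma$ taken large enough (depending only on $\eps_0$) that $\gamma/\sqrt{1+\gamma^2}\geq 1-\eps_0/4$, one obtains the uniform excess $-\mathrm{div}(Tv)(r)-H(r)\geq (n-1)\eps_0/(4R)$ on $[R-\delta_0,R]$. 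Since $0\leq v\leq\gamma\delta$ there and $f$ is increasing, the full supersolution inequality $-\mathrm{div}(Tv)\geq H+\lambda f(v)$ reduces to the single constraint
\[
\lambda(\gamma\delta)^p\leq\frac{(n-1)\eps_0}{4R}.
\]

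The comparison principle further requires $v\geq u$ on $\pa(B_R\setminus\overline{B}_{R-\delta})$. The outer inequality is free from $v(R)=u(R)=0$; the inner inequality reads $\gamma\delta\geq u(R-\delta)$. Since $u\in\mathcal{C}^2(\overline{B_R})$ is bounded, this in turn reduces to $\gamma\delta\geq\|u\|_\infty$. With $\gamma=C_1(1+\lambda)$ for a universal $C_1$ sufficiently large, one chooses $\delta$ small enough to satisfy both the supersolution budget $\lambda(\gamma\delta)^p\leq (n-1)\eps_0/(4R)$ and the majoration $\gamma\delta\geq\|u\|_\infty$; applying standard comparison for the radial mean-curvature operator (or Lemma \ref{lem:comparison}) yields $v\geq u$ on the annulus, whence $|u_r(R)|\leq|v_r(R)|=\gamma=C_1(1+\lambda)$.

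The main obstacle is the opposing tension between the two scalar conditions on $\gamma\delta$: it must be small for the supersolution inequality to survive the $\lambda f(v)$ term, and large for $v$ to dominate $u$ at $r=R-\delta$. The linear-in-$\lambda$ growth in the target estimate is precisely what balances this trade-off, and the argument implicitly relies on a uniform $L^\infty$ bound on $u$ (provided by Proposition \ref{prop:infty} when applied to a minimal solution $u_\lambda$, and by the $\mathcal{C}^2$ regularity for a general classical solution). The gap provided by \refe{eq:BB} is what makes the bound uniform in $\gamma$ and allows arbitrarily steep barriers.
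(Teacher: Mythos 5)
Your idea of a radial supersolution barrier near $\partial B_R$ is the right one, and your verification that the constant-slope ansatz $v(r)=\gamma(R-r)$ has the right geometry (excess mean curvature $(n-1)\eps_0/(4R)$ from condition \refe{eq:BB}) is correct. However, the comparison step contains a genuine gap that cannot be repaired with a linear barrier.

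The problem is the pair of constraints on $\gamma\delta$ that you yourself identify. The supersolution requirement gives an upper bound $\gamma\delta\leq\bigl(\tfrac{(n-1)\eps_0}{4R\lambda}\bigr)^{1/p}$, while the boundary-domination requirement gives the lower bound $\gamma\delta\geq \|u\|_{L^\infty}$ (or at least $\gamma\delta\geq u(R-\delta)$). Both constraints involve only the \emph{product} $\gamma\delta$: choosing $\gamma$ larger and $\delta$ correspondingly smaller does not ease the tension, contrary to what the final paragraph suggests ("one chooses $\delta$ small enough to satisfy both ..." is inconsistent, since the majoration forces $\delta$ to be bounded \emph{below}). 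Feasibility requires $\lambda\|u\|_{L^\infty}^p\leq \tfrac{(n-1)\eps_0}{4R}$, and there is no a priori reason this should hold; for the minimal solutions $u_\lambda$ with $\lambda$ near $\lambda^*$, one only knows $\|u_\lambda\|_{L^\infty}\leq C$ uniformly, and $\lambda^* C^p$ has no reason to be smaller than $(n-1)\eps_0/(4R)$. One might hope to instead verify $u(R-\delta)\leq \gamma\delta$ using $u(R)=0$ and continuity, but that needs control of the modulus of continuity of $u$ near $R$, which is precisely what the lemma is trying to establish; the argument becomes circular.

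The paper resolves exactly this tension by taking a \emph{curved} barrier: a piece of a circle of radius $\eps^{-1}$ through $(R,0)$, with height $\leq \delta$ but with vertical tangent (infinite slope) at its inner endpoint. Combined with a sliding argument, this eliminates the need for any lower bound on the barrier's height: a translated barrier can never first touch the Lipschitz function $u$ at the inner endpoint (where $h'=\infty$), so the only possible first touching point is interior, where the strict supersolution inequality produces a contradiction. The constraint that survives is only $\eps+\lambda\delta^p+\tfrac{n-1}{R}(\eps\delta)^2<\tfrac{n-1}{R}\eps_0$, which imposes only \emph{upper} bounds on $\eps$ and $\delta$ and is therefore always satisfiable. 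The moral is that the barrier must have unbounded slope at its left endpoint; a linear profile cannot work.
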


Since we know that $u_\lambda\in\Lip(\Omega)$ for $\lambda<\lambda^*$, 
Proposition \ref{prop:cassicalsol} implies that $u_\lambda$ is a classical solution, so Lemma \ref{lem:bd2} yields
$$ |(u_\lambda)_r (R)| \leq C(1+\lambda)\quad\mbox{ for all } \lambda<\lambda^*.$$
Passing to the limit, we obtain:
\begin{equation}\label{eq:nabd}
 | u^*_r(R) |\leq C(1+\lambda^*). 
 \end{equation}

\begin{proof}[Proof of Lemma \ref{lem:bd2}:] 
In this proof, Assumption 
(\ref{eq:BB}) plays a crucial role. When $\Omega$ is a ball of radius $R$ and using the fact that $H\in \Lip(\Omega)$, it implies:
\begin{equation}\label{eq:condHr} 
H(r)\leq (1-\eps_0)\frac{n-1}{R}
\end{equation}
in a neighborhood of $\pa \Omega$ (with a slightly smaller $\eps_0$).
The argument of our proof is similar to the proof of Theorem \ref{thm:gia} (ii) (to show that $u$ satisfies the Dirichlet condition), and relies on the construction of an appropriate barrier.  
Actually, whenever we have  $H(y)\leq (n-1)\Gamma(y)$, $y\in\partial\Omega$, there is a  a natural barrier at the boundary given by the cylinder generated by $\pa B_R$. 
Here, we modify this cylinder by slightly bending it along its generating straight line. The generating straight line thus becomes a circle of radius $\eps^{-1}$ and condition \refe{eq:condHr}  implies that this hypersurface is a supersolution for (\ref{eq:1}).
By radial symmetry, this amounts to consider a circle of radius $\eps^{-1}$ ($\eps$ to be determined) centered at $(M,\delta)$
with $\delta$ small and $M>R$ chosen such that the circle passes through the point $(R,0)$ (see Figure \ref{fig:barrier}).
We define the function  $h(r)$ in  $[M-\eps^{-1},R]$ such that $(r,h(r))$ lies on the circle (with $h(r)<\delta$).

Then, we note that for $r\in[M-\eps^{-1},R]$ and $\eps\delta\leq 1$, we have
$$\frac{h'(r)}{(1+h'(r)^2)^{1/2}} \leq \frac{h'(R)}{(1+h'(R)^2)^{1/2}} = -(1-(\delta\eps)^2)^{1/2}\leq -1+(\delta\eps)^2$$
(this quantity can be interpreted as the horizontal component of the normal vector to the circle),
and 
$$ \frac{d}{dr}\left ( \frac{h'(r)}{(1+h'(r)^2)^{1/2}} \right) =\eps$$
(this quantity is actually the one-dimensional curvature of the curve $r\mapsto h(r)$).
Hence we have:
\begin{align*} 
 \frac{1}{r^{n-1}}\frac{d}{dr}\left ( \frac{r^{n-1} h'(r)}{(1+h'(r)^2)^{1/2}} \right) & =  \frac{d}{dr}\left ( \frac{h'(r)}{(1+h'(r)^2)^{1/2}} \right) +\frac{n-1}{r} \frac{h'(r)}{(1+h'(r)^2)^{1/2}} \\
 &\leq  \eps +\frac{n-1}{r}  ( -1+(\delta\eps)^2) \\
 &\leq  \eps +\frac{n-1}{R}  ( -1+(\delta\eps)^2) 
 \end{align*}

We now use a classical sliding method: Let
$$\eta^*=\inf\{\eta>0 \, ;\, u(r)\leq h(r-\eta) \mbox{ for } r\in[M-\eps^{-1}+\eta,R] \}.$$

If $\eta^*>0$, then $h(r+\eta^*)$ touches $u$ from above at a point in $ (M-\eps^{-1}+\eta,R) $ such that $u< \delta$ (recall that $u$ is Lipschitz continuous so it cannot touch $h(r-\eta)$ at $M-\eps^{-1}+\eta$ since $h=\delta$ and $h'=\infty$ at that point).
At that contact point, we must thus  have
\begin{align*}
\frac{1}{r^{n-1}}\frac{d}{dr}\left ( \frac{r^{n-1} h'(r)}{(1+h'(r)^2)^{1/2}} \right) & \geq 
 \frac{1}{r^{n-1}}\frac{d}{dr}\left ( \frac{r^{n-1} u_r(r)}{(1+u_r(r)^2)^{1/2}} \right) \\
 & \geq   -(H+\lambda f(u))\\
 & \geq  -(1-\eps_0)\frac{n-1}{R} - \lambda\delta^p.
 \end{align*}
We will get a contradiction if $\eps$ and $\delta$ are such that
$$ \eps +\frac{n-1}{R}  ( -1+(\delta\eps)^2) < -(1-\eps_0)\frac{n-1}{R} - \lambda\delta^p$$
which is equivalent to
$$ \eps +\lambda\delta^p+\frac{n-1}{R} (\eps\delta)^2 <\frac{n-1}{R}\eps_0.$$
This can be achieved easily by choosing $\eps$ and $\delta$ small enough.

It follows that $\eta^*=0$ and so $u\leq h$ in the neighborhood of $R$. Since $u(R)=h(R)=0$, we deduce:
$$ |u'(R)| \leq |h'(R)|\leq C(R,n)(\eps\delta)^{-1} \leq C(R,n) \frac{1+\lambda}{\eps_0^2}.$$

\begin{figure}[htbp]
\begin{center}
\scalebox{0.5}{\pdfimage{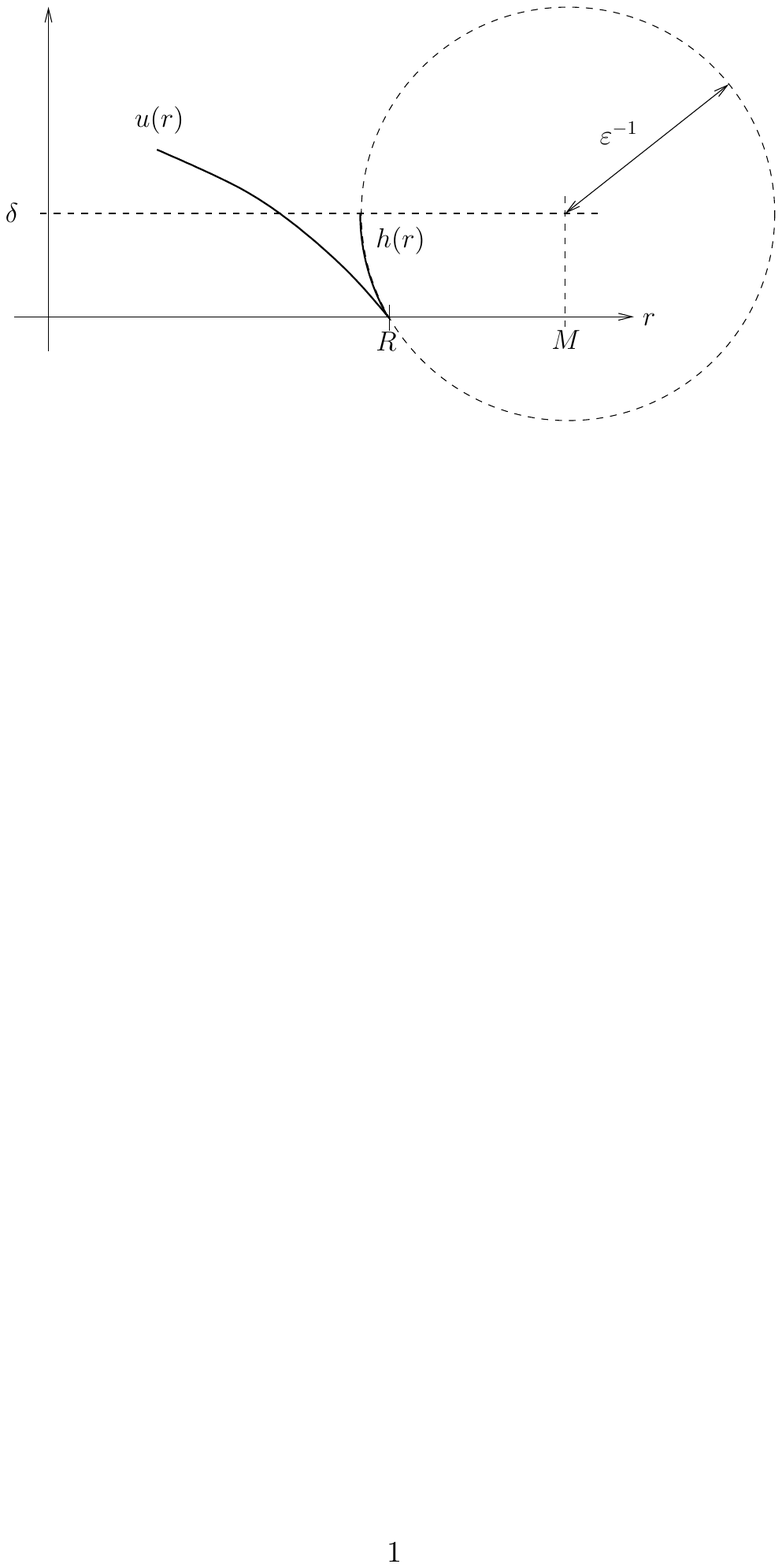} }
\caption{Construction of a barrier}
\label{fig:barrier}
\end{center}
\end{figure}

\end{proof}

\begin{corollary}[Bound on the gradient near the boundary]\label{cor:regbd}
Under the hypotheses of Lemma~\ref{lem:bd2}, there exist $\eta\in(0,R)$ and $C>0$ depending on $R$, $\eps_0$ and $n$ only such that 
$$|u_r (r)| \leq C\qquad \mbox{ for all } r\in [R-\eta,R].$$
\end{corollary}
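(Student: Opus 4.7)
The plan is to combine the pointwise boundary gradient bound of Lemma \ref{lem:bd2} with the integral identity \refe{signur}, which expresses the nonlinear quantity $|u_r(r)|/\sqrt{1+u_r(r)^2}$ as $G(r)/r^{n-1}$, where $G(r):=\int_0^r [H+\lambda f(u)]\,s^{n-1}\,ds$. Since the integrand is non-negative, $G$ is non-decreasing, so any bound at $r=R$ propagates backwards as long as we stay strictly below the critical value~$1$ (which would correspond to an infinite gradient).

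Concretely, Lemma \ref{lem:bd2} yields $|u_r(R)|\leq M_0$ with $M_0$ depending only on $R,n,\eps_0$ (and $\lambda$, which in applications is bounded by $\lambda^*$). Set $\theta:=M_0/\sqrt{1+M_0^2}\in(0,1)$, which depends only on $M_0$. Then by \refe{signur} and monotonicity of $G$,
\begin{equation*}
\frac{|u_r(r)|}{\sqrt{1+u_r(r)^2}}=\frac{G(r)}{r^{n-1}}\leq \frac{G(R)}{r^{n-1}}\leq \theta\left(\frac{R}{r}\right)^{n-1}\quad\text{for }r\in[R-\eta,R].
\end{equation*}
Choosing $\eta\in(0,R)$ small enough (depending only on $n$ and $\theta$) so that $\theta\bigl(R/(R-\eta)\bigr)^{n-1}\leq (1+\theta)/2<1$, we deduce $|u_r(r)|/\sqrt{1+u_r(r)^2}\leq (1+\theta)/2$ on $[R-\eta,R]$. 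Inverting the homeomorphism $s\mapsto s/\sqrt{1+s^2}$ of $[0,\infty)$ onto $[0,1)$ then yields the desired uniform bound $|u_r(r)|\leq C$, where $C$ depends only on $\theta$, hence only on $R$, $n$, $\eps_0$.

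There is no significant obstacle here: the argument is a short and direct consequence of Lemma \ref{lem:bd2} once one passes through the nonlinear quantity $|u_r|/\sqrt{1+u_r^2}$. The key insight is simply that a finite pointwise bound on $|u_r(R)|$ is equivalent, via $s\mapsto s/\sqrt{1+s^2}$, to the strict inequality $G(R)/R^{n-1}<1$, and the monotonicity of $G$ (itself a consequence of $H,f(u)\geq 0$) automatically propagates this strict inequality to a left neighborhood of $R$.
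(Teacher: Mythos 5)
Your argument is correct, and it takes a genuinely different route from the paper. The paper's proof re-runs the sliding-barrier argument of Lemma~\ref{lem:bd2} at an interior point $r_0$: since the barrier $h$ dominates $u$ near $r=R$, one has $u\leq\delta$ on $[R-\eta,R]$ for small $\eta$, and the same barrier construction (slid up to height $u(r_0)$) then yields a gradient bound at $r_0$. Your proof instead exploits the exact integrated form \refe{signur} of the radial ODE: writing $|u_r(r)|/\sqrt{1+u_r(r)^2}=G(r)/r^{n-1}$ with $G(r)=\int_0^r[H+\lambda f(u)]s^{n-1}ds$ nondecreasing, you observe that the pointwise bound at $r=R$ from Lemma~\ref{lem:bd2} is equivalent to $G(R)/R^{n-1}\leq\theta<1$, and since $G$ is monotone, $G(r)/r^{n-1}\leq\theta(R/r)^{n-1}$ stays strictly below $1$ on $[R-\eta,R]$ for $\eta$ small; inverting $s\mapsto s/\sqrt{1+s^2}$ closes the argument. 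This is more elementary (no second barrier/maximum-principle step), is fully explicit about the resulting constant, and transparently exhibits the role of the sign of $H+\lambda f(u)$; the trade-off is that it is strictly tied to the radial structure through \refe{signur}, whereas the barrier argument has a more PDE-generic flavor. One small remark: the constant $M_0$ from Lemma~\ref{lem:bd2} carries the factor $(1+\lambda)$, so strictly speaking your $C$ depends on $\lambda$ as well, just as in the paper's proof; this is harmless here since $\lambda\leq\lambda^*$, but worth flagging given the wording of the statement.
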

\begin{proof}
The same proof as that of
Lemma \ref{lem:bd2} shows that there exists $\delta>0$ and $C>0$ such that:
\begin{equation}\label{eq:if}
\mbox{If $u(r)\leq \delta$ for all $r\in [r_0,R]$ with $R-r_0\leq \delta$ then
$ |u_r(r_0)|\leq C.$}
\end{equation}
Furthermore, the proof of Lemma \ref{lem:bd2} implies that $u(r)\leq h(r)$ in a neighborhood of $R$, and so for some small $\eta$ we have:
$$ u(r)\leq \delta \mbox{ for all }  r\in [R-\eta,R].$$
The result follows.
\end{proof}

\subsubsection{Interior regularity}

We now show the following interior regularity result:
\begin{proposition}[Interior bound on the gradient] \label{prop:regmax}
Let $\eta\in(0,R/2)$. There exists $C_\eta>0$ depending only on $\eta$, $n$ and $\int_\Omega|Du_\lambda|$  such that, for all $0\leq\lambda<\lambda^*$, 
$$|\nabla u_\lambda(x)|\leq C_\eta \mbox{ for all $x$ in $\Omega$ with $\eta<|x|<R-\eta$.}$$
\end{proposition}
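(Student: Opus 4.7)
The starting point is the integral form of the radial ODE. Since $u_\lambda$ is already known to be classical in $B_R$ for $\lambda<\lambda^*$ (Theorem~\ref{thm:rad}), we may integrate \refe{eq:1rad} from $0$ to $r$ to obtain
\begin{equation*}
\phi(r) := \frac{r^{n-1} |u_{\lambda,r}(r)|}{\sqrt{1+u_{\lambda,r}(r)^2}} = \int_0^r s^{n-1}[H(s)+\lambda f(u_\lambda(s))]\,ds.
\end{equation*}
This function is non-decreasing in $r$, and Lemma~\ref{lem:A} applied to $A=B_r$ gives the pointwise bound $\phi(r)\leq r^{n-1}$. Writing $\psi(r):=\phi(r)/r^{n-1}\in[0,1]$, we have $|u_{\lambda,r}(r)|=\psi(r)/\sqrt{1-\psi(r)^2}$, so bounding the gradient in the interior is equivalent to bounding $\psi$ strictly away from~$1$ on $[\eta,R-\eta]$.

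To achieve this, the plan is to adapt the barrier construction of Lemma~\ref{lem:bd2} to interior points. For a fixed $r_0\in[\eta,R-\eta]$, I would construct a barrier of the same type as there — a circular arc (in the $(r,z)$-plane) of small curvature $\varepsilon$, centered at a point slightly outside the graph and tangent to the graph of $u_\lambda$ at a prescribed height. Writing the associated supersolution inequality
\begin{equation*}
\varepsilon + \lambda f(\|u_\lambda\|_\infty) + \frac{n-1}{r_0}(\varepsilon\delta)^2 < \frac{n-1}{r_0},
\end{equation*}
one sees that, because $r_0\geq\eta$ and $\|u_\lambda\|_\infty$ is controlled by Proposition~\ref{prop:infty} (itself ultimately deduced from the bounded BV norm via the coarea argument in Section~\ref{sec:linfty}), the parameters $\varepsilon,\delta$ can be chosen depending only on $\eta$, $n$, and $\int_\Omega|Du_\lambda|$. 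The classical sliding method (translating the barrier until it first touches the graph of $u_\lambda$) then produces the gradient bound $|u_{\lambda,r}(r_0)|\leq C_\eta$ at the contact point. As a variant, one can instead argue directly on $\phi$: if $\psi(r_0)>1-\gamma$, the Lipschitz bound $|q'(r)|\leq \|H+\lambda f(u_\lambda)\|_\infty+\frac{n-1}{\eta}$ on $q=-\psi$ propagates $\psi\geq 1-2\gamma$ on an interval of length $\sim\gamma$, and one tries to convert this into a lower bound on a weighted integral of $|u_{\lambda,r}|$ that would contradict Corollary~\ref{cor:BV}.

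The main obstacle is that the second, integral-based route does \emph{not} close: the lower bound it yields for $\int|u_{\lambda,r}|s^{n-1}ds$ on the propagation interval scales like $\sqrt{\gamma}$, which vanishes as $\gamma\to 0$, so it cannot on its own force $\gamma$ bounded below. The serious content of the proposition therefore lies in executing the barrier argument away from the boundary: one must verify that the geometry of bending a cylinder into a circle, which at $\partial\Omega$ is dictated by the mean curvature $\Gamma=1/R$ of $\partial B_R$, can be carried out at interior points with an effective scale set by $\eta$ rather than $R$, while still being compatible with the radial equation and the uniform bounds available on $H+\lambda f(u_\lambda)$.
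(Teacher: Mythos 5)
There is a genuine gap, and it lies exactly where you sense trouble: the barrier argument of Lemma~\ref{lem:bd2} does not transplant to interior points, and the integral route (as you yourself note) does not close. The boundary barrier works because of assumption~\refe{eq:BB}, which forces $H(r)\leq(1-\eps_0)\frac{n-1}{R}$ near $r=R$, and because the contact happens at a height $u\leq\delta$ where $\lambda f(u)\leq\lambda\delta^p$ is tiny; that strict $\eps_0$-gap between $H+\lambda f(u)$ and $\frac{n-1}{R}$ is what makes the bent cylinder a genuine strict supersolution. At an interior radius $r_0$ there is no such gap: your displayed supersolution inequality would require $H(r_0)+\lambda f(\|u_\lambda\|_\infty)<\frac{n-1}{r_0}$, and nothing in the hypotheses (nor in Proposition~\ref{prop:infty}) makes the left side small compared to $\frac{n-1}{r_0}$. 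A second, independent obstruction is that at $\partial\Omega$ the barrier is pinned at the known height $u(R)=0$, whereas at $r_0$ the value $u(r_0)$ is not controlled, so there is no canonical place to anchor the arc. Finally, the whole plan relies only on Lemma~\ref{lem:A}, which yields $\phi(r)\leq r^{n-1}$, i.e.\ $\psi<1$; this is exactly the information used in the proof of Theorem~\ref{thm:rad} and it produces the estimate~\refe{eq:gradrad}, whose constant degenerates as $\lambda\to\lambda^*$. A uniform interior bound cannot follow from Lemma~\ref{lem:A} alone.

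The ingredient you are missing is the semi-stability of the minimal solution. The paper's proof never uses a barrier: it exploits that $u_\lambda$ is a one-sided global minimizer of $\J_\lambda$ (Lemma~\ref{lem:min1}). Testing with an approximate characteristic function $\vphi_\eps$ of $B_{r_0}$, expanding $\J_\lambda(u_\lambda+t\vphi_\eps)$ to third order and using $Q_\lambda(\vphi_\eps)\geq 0$, one obtains a lower bound $\rho(\eps)\geq c\,\eps$ where $\rho(\eps)=\sup_{(r_0-\eps,r_0)}v^{-2}$ and $v=\sqrt{1+u_r^2}$; the constant $c$ comes from the strict convexity of $F$ (using $u_\lambda\geq\underline{u}>0$ in the interior) and does not deteriorate as $\lambda\to\lambda^*$. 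The $\sup$ is then upgraded to an $\inf$ by a Harnack inequality for $w=v^{-2}$, which solves a uniformly elliptic equation derived from~\refe{eq:1rad} (Lemma~\ref{lem:vv}), with a source controlled by $\int_\Omega|Du_\lambda|$ via Corollary~\ref{cor:BV}. This is the mechanism that converts stability into a pointwise gradient bound; without it, neither the barrier nor the coarea/ODE manipulations can force $\psi$ uniformly away from $1$.
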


Using Lemma~\ref{lem:regint} (regularity for $r$ close to $0$), Corollary~\ref{cor:regbd} (regularity for $r$ close to $R$), and Proposition~\ref{prop:regmax} (together with Corollary~\ref{cor:BV} which give the $\mbox{BV}$ estimate uniformly with respect to $\lambda$), we deduce that there exists $C$ depending only on $H$ and $n$ such that
$$ 
|\nabla u_\lambda(x)|\leq C \mbox{ for all $x$ in $\Omega$},
$$
for all $\lambda\in[0,\lambda^*)$.
Theorem \ref{thm:d2} then follows  by passing to the limit $\lambda\to\lambda^*$.
\vspace{10pt}

\begin{proof}[Proof of Proposition \ref{prop:regmax}] 
It is sufficient to prove the result for $\frac{\lambda^*}{2}<\lambda<\lambda^*$.
Throughout the proof, we fix $\lambda\in(\frac{\lambda^*}{2},\lambda^*)$, $r_0\in (\eta,R-\eta)$ and we denote 
$$u=u_\lambda \qquad \mbox{ and } \qquad v=\sqrt{1+u_r^2}.$$
\medskip

\noindent{\bf Idea of the proof:} Let $\vphi_0=\vphi_{B_{r_0}}$ (the characteristic function of the set $B_{r_0}$).
Then by definition of $\J_\lambda$, we have for all $t\geq 0$:
\begin{equation*} 
\J_\lambda (u +t\vphi_0) \leq 
\J_\lambda(u) + t \int_\Omega|D\vphi_0| -t\int_\Omega H\vphi_0\, dx  - \lambda\int_\Omega F(u+t\vphi_0) -F(u)\, dx \end{equation*}
Furthermore, since $u\geq \underline u$, we have $u\geq \mu >0$ in $B_{r_0}$ and so 
$$F(u+t\vphi_0) -F(u) \geq f(u)t\vphi_0 +  \frac{\alpha}{2}t^2 \vphi_0^2 \quad \mbox{ for all }x\in\Omega $$
(with $\alpha$ such that $f'(s)\geq \alpha$ for all $s\geq \mu$).
It follows:
\begin{align*} 
\J_\lambda (u +t\vphi_0) &\leq
\J_\lambda(u) + t \int_\Omega|D\vphi_0| -t\int_\Omega (H+\lambda f(u))\vphi_0\, dx - t^2\frac{\alpha  \lambda}{2} \int_\Omega \vphi_0^2\, dx\\
&= \J_\lambda(u) + t P(B_{r_0}) -t\int_{B_{r_0}} H+\lambda f(u)\, dx - t^2\frac{\alpha  \lambda}{2}  |B_{r_0}|\\
&= \J_\lambda(u)  +tP(B_{r_0})\left(1-\frac{|u_r(r_0)|}{v(r_0)}\right)- t^2\frac{\alpha  \lambda}{2}|B_{r_0}|.
\end{align*}
where we used the following equality, obtained by  integration of (\ref{eq:1}) over  $B_{r_0}$:
\begin{equation*}
-P(B_{r_0})\frac{u_r(r_0)}{v(r_0)}=\int_{B_{r_0}} H+\lambda f(u)\, dx.
\end{equation*}

This would imply $\frac{|u_r|}{v}\leq 1-\delta$ and yield Proposition  \ref{prop:regmax} if we had $\J_\lambda(u) \leq \J_\lambda (u +t\vphi_0)$ for some $t>0$.
Unfortunately, $u=u_\lambda$ is only a minimizer with respect to negative perturbations. 
The proof of Proposition  \ref{prop:regmax} thus consists in using the semi-stability to show that $u$ is almost a minimizer (up to some term of order $3$) with respect to some positive perturbations.

\vspace{20pt}

\noindent{\bf Step 1:}
First of all, the function $\vphi_0$ above is not smooth, so we need to consider the following piecewise linear approximation of $\vphi_0$: 
$$ \vphi_\eps = \left\{ \begin{array}{ll}1 & \mbox{ if }   r\leq r_0-\eps \\ 
\eps^{-1}(r_0-r) &\mbox{ if }  r_0-\eps \leq  r\leq r_0 \\
0 & \mbox{ if }   r\geq r_0.\end{array}\right.$$
We then have (using Equation (\ref{eq:1}) and denoting by $\omega_n$ the volume of the unit ball in $\R^n$):
\begin{align*}
\J_\lambda (u +t\vphi_\eps)
& \leq  \J_\lambda(u) +t \int_\Omega |\nabla \vphi_\eps|\, dx -t\int_\Omega (H+\lambda f(u))\vphi_\eps \, dx-  t^2\frac{\alpha\lambda}{2} \int_\Omega \vphi_\eps^2\, dx \\
& =  \J_\lambda(u) +t \int_\Omega |\nabla\vphi_\eps|\, dx -t\int_\Omega \frac{(u)_r (\vphi_\eps)_r}{v} \, dx- t^2\frac{\alpha\lambda}{2} \int_\Omega \vphi_\eps^2\, dx \\
& =  \J_\lambda(u) +t \int_\Omega \left(1- \frac{|u_r|}{v}\right)|\nabla\vphi_\eps|\, dx -   t^2\frac{\alpha\lambda}{2}\int_\Omega \vphi_\eps^2\, dx \\
& =  \J_\lambda(u) +t \omega_n \int_{r_0-\eps}^{r_0} \left(1- \frac{|u_r|}{v}\right)\eps^{-1}r^{n-1}\, dr - t^2\frac{\alpha\lambda}{2}  \int_\Omega \vphi_\eps^2\, dx  \\
& \leq  \J_\lambda(u) +t \omega_n \eps^{-1}\int_{r_0-\eps}^{r_0}  \frac{1}{v^2}r^{n-1}\, dr -  t^2\frac{\alpha\lambda}{2} \int_\Omega \vphi_\eps^2\, dx 
\end{align*}
and so if we denote $\rho(\eps) = \sup_{r\in (r_0-\eps,r_0)} \frac{1}{v^2}$, we deduce:
\begin{equation} \label{eq:te}
\J_\lambda (u +t\vphi_\eps)
 \leq  \J(u)_\lambda +t \omega_n  r_0^{n-1}\rho(\eps) - t^2\frac{\alpha\lambda}{2} \omega_n \left(\frac{r_0}{2}\right)^n 
\end{equation}
 for all  $\eps<r_0/2$.

\vspace{20pt}

\noindent{\bf Step 2:}
Since our goal is to show that $\rho(\eps)$ is cannot be too small, we need to control $\J (u +t\vphi_\eps)$ from below:
for a smooth radial function $\vphi$, we denote
$$\theta(t)=\mathcal A(u+t\vphi)=\int_\Omega L(u_r+t\varphi_r),$$
where $L(s)=(1+s^2)^{1/2}$.
Then
$$\theta^{(3)}(t) = \int_\Omega L^{(3)}(u_r+t\varphi_r)\varphi_r^3\, dx$$
where
$$L^{(3)}\colon s\mapsto\frac{-3s}{(1+s^2)^{5/2}}$$ 
satisfies
$$| L^{(3)} (s)| \leq \frac{3}{(1+s^2)^{2}},\quad\forall s\geq 0.$$

When $\vphi=\vphi_\eps$, we have  $|u_r+t\varphi_r | \geq |u_r| $ for all $t\geq 0$ and therefore:
\begin{align*}
|\theta^{(3)}(t)|  & \leq   \int_\Omega \frac{3}{v^4}(|(\varphi_\eps)_r|^3\, dx\\
& \leq   \eps^{-3}\omega_n \int_{r_0-\eps}^{r_0} \frac{3}{v^4} r^{n-1}\, dr\\
& \leq   \eps^{-2}\omega_n \, \rho (\eps)^2 \, r_0^{n-1}
\end{align*}
for all $t\geq 0$.

Since the second variation $Q_\lambda(\vphi_\eps)$ is non-negative by Lemma \ref{lem:min1} (recall that $u_\lambda$ is a semi-stable solution), we deduce that for some $t_0\in(0,t)$ we have:
\begin{align}
\J_\lambda (u +t\vphi_\eps) & =  \J_\lambda(u) + \frac{t^2}{2} Q_\lambda(\vphi_\eps) + \theta^{(3)}(t_0) \frac{t^3}{6} - \lambda \int_\Omega \frac{f''(u+t_0\vphi_\eps)}{6} t^3 \vphi^3\, dx   \nonumber\\
& \geq   \J_\lambda(u) -\frac{t^3}{2} | \theta^{(3)}(t_0)|-\|f''(u+t_0\vphi_\eps)\|_{L^\infty(B_{r_0})}\lambda t^3\omega_n r_0^n\nonumber \\
&\geq    \J_\lambda(u)- \frac{t^3}{2}\eps^{-2}\omega_n \,\rho (\eps)^2  \, r_0^{n-1}  -C \lambda t^3\omega_n r_0^n,\label{eq:thirdvar}
\end{align}
where we used the fact that $f''(u+t_0\vphi_\eps)\in L^\infty(B_{r_0})$ (if $p\geq 2$, this is a consequence of the $L^\infty$ bound on $u$, if $p\in (1,2)$, then this follows from the fact that $u+t_0\vphi_\eps\geq \underline u >0$ in $B_{r_0}$).

\vspace{20pt}

\noindent{\bf Step 3:}
Inequalities (\ref{eq:te}) and (\ref{eq:thirdvar}) yield:
$$
\lambda \frac{t^2}{2} \omega_n r_0^n  \leq t \omega_n  r_0^{n-1}\rho(\eps) +\frac{t^3}{2} \eps^{-2}\omega_n \,\rho (\eps)^2  \, r_0^{n-1}+C \lambda t^3\omega_n r_0^n
$$
and so 
$$
\frac{\lambda r_0}{2}(1-2C t) t  \leq \rho(\eps) + \frac{\eps^{-2}t^2}{2}  \,\rho (\eps)^2 
$$
for all $t\geq 0$.
If $t\leq 1/(4C )$, we deduce
$$
\mu t  \leq \rho(\eps) + \frac{\eps^{-2}t^2}{2}  \,\rho (\eps)^2 
$$
with $\mu=\lambda r_1/4$ (recall that $r_0>r_1$).

Let now $t=M\eps$ ($M$ to be chosen later), then we get
$$
\mu M\eps  \leq \rho(\eps) + \frac{M^2}{2}  \,\rho (\eps)^2 .
$$
If $\rho(\eps)\leq \frac{\mu M\eps}{2}$, then 
$$  \rho(\eps) + \frac{M^2}{2}  \,\rho (\eps)^2 \leq \frac{\mu M\eps}{2} + \frac{\mu^2 M^4 \eps^2}{8}
$$
and we get a contradiction if $\frac{\mu^2 M^4 \eps^2}{8} < \frac{\mu M\eps}{2}$. It follows that 
\begin{equation}\label{eq:rho}  
\rho(\eps) \geq \frac{\mu M\eps}{2} \qquad \mbox{ for all } \eps< \frac{4}{\mu M^3}.
\end{equation}

\vspace{20pt}

\noindent{\bf Step 4:}
Since  $\rho(\eps) = \sup_{r\in (r_0-\eps,r_0)} \frac{1}{v^2}$,   (\ref{eq:rho}) yields
$$ 
\inf _{r\in (r_0-\eps,r_0)} v^2 \leq \frac{2}{\mu M\eps}
\qquad \mbox{ for all } \eps< \frac{4}{\mu M^3}.
$$
In order to conclude, we need to use some type of Harnack inequality to control $\sup _{r\in (r_0-\eps,r_0)} v^2 $.
This will follow from the following lemma: 
\begin{lemma}\label{lem:vv}
Let $v=\sqrt{1+u_r^2}$. Then $v$ solves the following equation in $(0,R)$:
\begin{equation}\label{eq:vv}
-\frac{1}{r^{n-1}}\left(\frac{r^{n-1}v_r}{v^3}\right)_r + c^2 =H_r \frac{u_r}{v}+\lambda f'(u)\frac{u_r^2}{v}.
\end{equation}
where
$$c^2 =  \frac{n-1}{r^2}\frac{u_r^2}{v^2}+ \frac{u_{rr}^2}{v^6}$$
is the sum of the square of the curvatures of the graph of $u$.
\end{lemma}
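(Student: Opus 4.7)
The plan is to derive \eqref{eq:vv} by differentiating the radial mean-curvature equation \eqref{eq:1rad2} with respect to $r$ and then reorganizing the resulting expression into divergence form in the variable $v = \sqrt{1+u_r^2}$.

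The first step is to record the two key chain-rule identities that connect derivatives of $u$ with derivatives of $v$. Since $v^2 = 1+u_r^2$, differentiation gives $v_r = u_r u_{rr}/v$, and a direct computation yields the crucial identity
\begin{equation*}
\left(\frac{u_r}{v}\right)_r = \frac{u_{rr}}{v} - \frac{u_r v_r}{v^2} = \frac{u_{rr}}{v^3},
\end{equation*}
where in the last equality I use $v^2-u_r^2=1$. Both of these will be applied repeatedly.

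Next, I differentiate equation \eqref{eq:1rad2} in $r$. Using the identity above to rewrite $(u_r/v)_r$ and then multiplying the result by $u_r/v$, I obtain
\begin{equation*}
-\frac{u_r}{v}\left(\frac{u_{rr}}{v^3}\right)_r + \frac{n-1}{r^2}\frac{u_r^2}{v^2} - \frac{n-1}{r}\frac{u_r u_{rr}}{v^4} = H_r\frac{u_r}{v} + \lambda f'(u)\frac{u_r^2}{v}.
\end{equation*}
The right-hand side already matches the right-hand side of \eqref{eq:vv}, and the term $\frac{n-1}{r^2}\frac{u_r^2}{v^2}$ already supplies one half of $c^2$. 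The content of the lemma is therefore the purely algebraic identity
\begin{equation*}
-\frac{u_r}{v}\left(\frac{u_{rr}}{v^3}\right)_r - \frac{n-1}{r}\frac{u_r u_{rr}}{v^4} = -\frac{1}{r^{n-1}}\left(\frac{r^{n-1}v_r}{v^3}\right)_r - \frac{u_{rr}^2}{v^6}.
\end{equation*}

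To verify this identity I use $v_r/v^3 = u_r u_{rr}/v^4$ and expand
\begin{equation*}
\frac{1}{r^{n-1}}\left(\frac{r^{n-1}v_r}{v^3}\right)_r = \frac{n-1}{r}\frac{u_r u_{rr}}{v^4} + \left(\frac{u_r u_{rr}}{v^4}\right)_r,
\end{equation*}
and then apply the product rule together with $(u_r/v)_r = u_{rr}/v^3$ to get
\begin{equation*}
\left(\frac{u_r u_{rr}}{v^4}\right)_r = \frac{u_r}{v}\left(\frac{u_{rr}}{v^3}\right)_r + \frac{u_{rr}^2}{v^6}.
\end{equation*}
Combining these two displays gives exactly the algebraic identity above, and substituting back completes the proof. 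No subtle step is expected; the only care needed is bookkeeping of signs and powers of $v$, and a tacit assumption that $u$ is regular enough (e.g.\ $\mathcal C^3$) for $u_{rrr}$ to exist, which is available away from $r=0$ by the classical interior regularity of solutions to \eqref{eq:1} once Lipschitz bounds are in hand (Proposition~\ref{prop:cassicalsol}).
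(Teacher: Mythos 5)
Your argument is essentially the paper's: differentiate the radial equation, multiply through by $u_r/v$, and use $(u_r/v)_r=u_{rr}/v^3$ together with $v_r=u_ru_{rr}/v$ to fold the result into the divergence form involving $v$; the only cosmetic difference is that you start from the expanded form \eqref{eq:1rad2} rather than \eqref{eq:1rad}. One sign slip, though: the displayed ``purely algebraic identity'' should have $+\frac{u_{rr}^2}{v^6}$ on the right, not $-\frac{u_{rr}^2}{v^6}$, which is in fact exactly what your last two displays produce when combined (and is what is needed, since $c^2$ carries a $+\frac{u_{rr}^2}{v^6}$).
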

We postpone the proof of this lemma to the end of this section.
Clearly, the equation (\ref{eq:vv}) is degenerate elliptic. In order to write a Harnack inequality, we introduce $w=\frac{1}{v^2}$, solution of the following equation
$$ 
\frac{1}{r^{n-1}}\left(r^{n-1}w_r\right)_r  =2H_r \frac{u_r}{v}+2\lambda f'(u)\frac{u_r^2}{v}-2c^2
$$
which is a nice uniformly elliptic equation in a neighborhood of $r_0\in(0,R)$.
In particular, if $\eps\leq R-r_0$,  Harnack's inequality \cite{GT} yields:
\begin{equation}\label{eq:harnack}
\sup_{r\in (r_0-\eps,r_0)} w \leq C \inf_{r\in (r_0-\eps,r_0)} w + C \eps \|g\|_{L^1(r_0-2\eps,r_0+\eps)} 
\end{equation}
where 
$$ g = 2H_r \frac{u_r}{v}+2\lambda f'(u)\frac{u_r^2}{v}-2c^2.$$

Next, we note that 
$$|g|\leq 2|H_r|+C\lambda |u_r| + 2 c^2.$$
It is readily seen that the first $(n-1)$ curvatures $\frac{1}{r}\frac{u_r}{v}$ are bounded in a neighborhood of $r_0\neq 0$. Furthermore, since the mean curvature is in $L^\infty$, it is easy to check that the last curvature is also bounded: more precisely, (\ref{eq:1rad2}) gives
$$ \frac{u_{rr}}{v^3} = -H-\lambda f(u) - \frac{n-1}{r}\frac{u_r}{v} \in L^\infty.$$
We deduce that $c^2\in L^\infty$ and since $u\in \mathrm{BV}(\Omega)$, we get 
$$ \|g\|_{L^1(r_0-2\eps,r_0+\eps)} \leq C\int_\Omega|Du| +C$$
Together with (\ref{eq:harnack}) and (\ref{eq:rho}) (and recalling that $\rho(\eps) = \sup_{r\in (r_0-\eps,r_0)} w^2$), we deduce:
$$ 
\frac{\mu M\eps}{2}  \leq C \inf_{r\in (r_0-\eps,r_0)} w + C\left( \int_\Omega|Du| +1\right)\eps 
\qquad \mbox{ for all } \eps< \frac{4}{\mu M^3}.
$$
With $M$ large enough ($M\geq \frac{4C}{\mu} \left(\int_\Omega|Du|+1\right)$), it follows that
$$ 
\frac{\mu M\eps}{4}  \leq C \inf_{r\in (r_0-\eps,r_0)} w \qquad \mbox{ for all } \eps< \frac{4}{\mu M^3}
$$
and thus (with $\eps=\min(\frac{2}{\mu M^3},(R-r_0)/4,\frac{1}{4MC}$)):
$$ v(r_0)^2 \leq \sup_{r\in (r_0-\eps,r_0)} v^2 \leq C((\lambda r_0)^{-1},(R-r_0)^{-1},\int_\Omega|Du|,\|u\|_{L^\infty(\Omega)}),$$
which completes the proof.
\end{proof}

\begin{proof}[Proof of Lemma \ref{lem:vv}]
Taking the derivative of (\ref{eq:1rad})  with respect to $r$ and multiplying by $u_r $, we get:
$$ 
\frac{n-1}{r^n}\left(\frac{r^{n-1}u_r}{v}\right)_r u_r-\frac{1}{r^{n-1}}\left(\frac{r^{n-1}u_r}{v}\right)_{rr} u_r = H_r u_r+ \lambda f'(u) u_r^2. 
$$
Using the fact that
$$\left(\frac{u_r}{v}\right)_r = \frac{u_{rr}}{v^3} \qquad \mbox{ and }\qquad v_r = \frac{u_r u_{rr}}{v},$$
we deduce:
\begin{multline*}
\frac{(n-1)^2}{r^n}\frac{r^{n-2}u_r^2}{v}+\frac{n-1}{r}\frac{u_ru_{rr}}{v^3}-\frac{n-1}{r^{n-1}}\left(\frac{r^{n-2}u_r}{v}\right)_r u_r -\frac{1}{r^{n-1}} \left(\frac{r^{n-1}u_{rr}}{v^3}\right)_r u_r \\
=H_r u_r+ \lambda f'(u) u_r^2
\end{multline*}
and so (simplifying and dividing by $v$):
$$ 
\frac{(n-1)^2}{r^2}\frac{u_r^2}{v^2} - \frac{(n-1)(n-2)}{r^{n-1}}\frac{r^{n-3}u_r^2}{v^2} -\frac{1}{r^{n-1}}  \left(\frac{r^{n-1}u_{rr}}{v^3}\right)_r \frac{u_r}{v} =H_r \frac{u_r}{v}+\lambda f'(u)\frac{u_r^2}{v}.
$$
This yields
$$
\frac{(n-1)}{r^2}\frac{u_r^2}{v^2} 
-\frac{1}{r^{n-1}}  \left(\frac{r^{n-1}u_{rr}u_r}{v^4}\right)_r +
\frac{1}{r^{n-1}}  \frac{r^{n-1}u_{rr}}{v^3}\left(\frac{u_r}{v} \right)_r 
=H_r \frac{u_r}{v}+\lambda f'(u)\frac{u_r^2}{v}
$$
hence
$$
\frac{(n-1)}{r^2}\frac{u_r^2}{v^2} 
-\frac{1}{r^{n-1}}  \left(\frac{r^{n-1}v_r}{v^3}\right)_r +
 \frac{u_{rr}^2}{v^6} 
=H_r \frac{u_r}{v}+\lambda f'(u)\frac{u_r^2}{v}
$$
which is the desired equation.
\end{proof}

\subsubsection{Proof of Theorem~\ref{thm:CR}}\label{sec:thmCR}

In this section, we adapt the continuation method of \cite{CR} to prove Theorem~\ref{thm:CR}.

First, we need to introduce some notations:
Let $\alpha\in(0,1)$ and, for $k\in\N$, let $\mathcal C^{k,\alpha}_0(\overline{\Omega})$ be the set of functions $u\in\mathcal  C^{k,\alpha}(\overline{\Omega})$ that satisfy $u=0$ on $\partial\Omega$. Let $\T\colon \mathcal C^{2,\alpha}_0(\overline{\Omega})\times\R\to \mathcal C^{\alpha}_0(\overline{\Omega}) $ be defined by
\begin{equation*}
\T(u,\lambda)=-\div(Tu)-H-\lambda f(u).
\end{equation*}
The function $\T$ is twice continuously differentiable and, at any point $(u,\lambda)\in \mathcal C^{2,\alpha}_0(\overline{\Omega})\times\R$, has first derivatives
\begin{equation*}
\T_u(u,\lambda)\colon v\mapsto -\partial_i(a^{ij}(\nabla u)\partial_j v)-\lambda f'(u)v,\quad \T_\lambda(u,\lambda)=-f(u),
\end{equation*}
where we use the convention of summation over repeated indices and set, for $\p\in\R^n$,
\begin{equation*}
a^i(\p)=\frac{\p_i}{(1+|\p|^2)^{1/2}},\quad a^{ij}(\p)=\frac{\partial a^i}{\partial \p_j}(\p)=\frac{1}{(1+|\p|^2)^{1/2}}\left(\delta_{ij}-\frac{\p_i\p_j}{1+|\p|^2}\right).
\end{equation*}
The second derivatives of $\T$ at any point $(u,\lambda)\in \mathcal C^{2,\alpha}_0(\overline{\Omega})\times\R$ are 
\begin{equation*}
\T_{uu}(u,\lambda)(v,w)=-\partial_i(a^{ijk}(\nabla u)\partial_j v\partial_k w)-\lambda f''(u)vw
\label{secondderivF}\end{equation*}
and $\T_{u\lambda}(u,\lambda)(v,\mu)=-\mu f'(u)v$, $\T_{\lambda\lambda}(u,\lambda)=0$, where
\begin{equation*}
a^{ijk}(\p)=\frac{\partial a^{ij}}{\partial \p_k}(\p)=3\frac{\p_i \p_j \p_k}{(1+|\p|^2)^{5/2}}-\frac{1}{(1+|\p|^2)^{3/2}}(\delta_{ij}\p_k+\delta_{ik}\p_j+\delta_{kj}\p_i).
\label{aijk}\end{equation*}
We note for further use that, given $\p,\q\in\R^n$,
\begin{equation*}
a^{ijk}(\p)\q_i \q_j \q_k=3\frac{\p\cdot \q}{(1+|\p|^2)^{5/2}}((\p\cdot \q)^2-|\q|^2(1+|\p|^2)),
\end{equation*}
and thus, in particular,
\begin{equation}
\p\cdot \q\geq 0\quad \Longrightarrow \quad a^{ijk}(\p)\q_i \q_j \q_k\leq 0.
\label{signaijk}\end{equation}

Next, we note that for any $u,v,w$  radially symmetric function,  non-increasing with respect to $r$, we have
\begin{equation}
[a^i(\nabla u)-a^i(\nabla v)-a^{ij}(\nabla u)\partial_j(u-v)]\partial_i w\geq 0,
\label{convexTradial}\end{equation}
or, equivalently, setting $A(\nabla u):=(a^{ij}(\nabla u))_{ij}$:
\begin{equation}
(Tu-Tv-A(\nabla u)\nabla(u-v))\cdot\nabla w\geq 0.
\label{convexTradial2}\end{equation}
Indeed, the left-hand side of \refe{convexTradial} rewrites
\begin{equation*}
(h(p)-h(q)-h'(p)(p-q))s,\quad\mbox{ where } h(p)=\frac{p}{(1+p^2)^{1/2}}, 
\end{equation*}
where $p=\partial_r u \leq0$, $q=\partial_r v\leq0$, $s=\partial_r w\leq 0$ and $h$ is convex on $\R_-$.
\medskip

Recall that $\underline{u}\in \mathcal C^{2,\alpha}_0(\overline{\Omega})$ is the solution to $\T(\underline{u},0)=0$. In particular $\underline{u}$ is radially symmetric and non-increasing with respect to $r$. At $\lambda=0$, the map $\T_u(\underline{u},0)\colon\mathcal C^{2,\alpha}_0(\overline{\Omega})\to\mathcal C^{\alpha}_0(\overline{\Omega}) $ is invertible since it defines a uniformly elliptic operator with no zero-th order terms. By the Implicit Function Theorem, we obtain the existence of $a>0$ and of a $\mathcal C^2$ curve $\lambda\mapsto u(\lambda)$ from $[0,a]$ to $\mathcal C^{2,\alpha}_0(\overline{\Omega})$ of solutions to $\T(u(\lambda),\lambda)=0$ such that $u(0)=\underline{u}$. 

Let now $\overline{\lambda}\in(0,\lambda^*]$ be the largest $b>0$ such that this curve can be continued to $[0,b)$ under the additional constraint that for all $\lambda\in [0,b)$, $\T_u(u(\lambda),\lambda)$ is invertible. We denote by $L_\lambda$ the elliptic operator $L_\lambda=\T_u(u(\lambda),\lambda)$ and by 
\begin{equation*}
\mu_1(\lambda)<\mu_2(\lambda)\leq\mu_3(\lambda)\ldots
\end{equation*} 
its eigenvalues. It is readily seen that $\mu_1(0)>0$ (since there are no zero-th order terms in $L_0$).
Since $\lambda\mapsto\mu_1(\lambda)$ is continuous\footnote{this follows from the continuity of the map $\lambda\mapsto u(\lambda)$ valued in $\mathcal C^{2,\alpha}(\overline{\Omega})$ and from the characterization of $\mu_1(\lambda)$ as the supremum over non-trivial $\varphi\in\mathcal C^2(\overline{\Omega})$ of the Rayleigh quotients $\frac{(L_\lambda\varphi,\varphi)}{(\varphi,\varphi)}$ where $(\cdot,\cdot)$ is the canonical scalar product over $L^2(\Omega)$}
and $\mu_1(\lambda)\neq 0$ on $[0,\overline{\lambda})$, we see that
$\mu_1(\lambda)>0$ for all $\lambda\in [0,\overline{\lambda})$. 

Note also that the function $u(\lambda)$ is a radially symmetric\footnote{this is the case of every terms in the iterative sequence $u_n(\lambda)$ converging to $u(\lambda)$ that is constructed by application of the Implicit Function Theorem}, and that $L_\lambda$ therefore admits a first eigenvector $w^1_\lambda>0$ associated to the eigenvalue $\mu_1(\lambda)$ which is also  a radially symmetric function. Furthermore, one can check that $w^1_\lambda$ is non-increasing with respect to $r$: As in \refe{eq:1rad}-\refe{signur}, this follows directly from the equation $L_\lambda w^1_\lambda =\mu_1({\lambda}) w^1_\lambda$ written in terms of the $r$-variable, {\it i.e.}
\begin{equation*}
-\frac{1}{r^{n-1}}\partial_r\left(\frac{r^{n-1}}{(1+|\partial_r u(\lambda)|^2)^{3/2}}\partial_r w^1_\lambda\right)=\lambda f'(u(\lambda))w^1_\lambda+\mu_1({\lambda}) w^1_\lambda\geq 0. 
\end{equation*}
We can now prove that $u(\lambda)$ and $u_\lambda$ coincide.

\begin{lemma} We have $\overline{\lambda}=\lambda^*$, $u(\lambda)=u_\lambda$ (the minimal solution), $\mu_1(\lambda)>0$ for all $\lambda\in [0,\lambda^*)$ and $\mu_1(\lambda^*)=0$.
\label{lem:minimalbranch}\end{lemma}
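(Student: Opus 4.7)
The plan is to prove the four claims in turn: the first two by continuation arguments and the last by a Lyapunov--Schmidt-type expansion. I first show $u(\lambda) = u_\lambda$ on $[0, \overline\lambda)$ by verifying that $E := \{\lambda \in [0, \overline\lambda) : u(\lambda) = u_\lambda\}$ is non-empty, open and closed in $[0, \overline\lambda)$. At $\lambda = 0$ both reduce to $\underline u$. Openness follows from the local uniqueness of classical solutions near $(u(\lambda_0), \lambda_0)$ in $\mathcal C^{2,\alpha}_0(\overline \Omega)$ provided by the implicit function theorem (available since $L_{\lambda_0}$ is invertible), combined with the continuity of $\lambda \mapsto u_\lambda$ into $\mathcal C^{2,\alpha}_0(\overline\Omega)$ (which follows from monotonicity, the uniform Lipschitz bound of Theorem \ref{thm:d2}, and Schauder estimates applied to the quasi-linear equation satisfied by $u_\lambda$). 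Closedness follows from the same continuity.

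Passing to the limit $\lambda \to \overline\lambda^-$, these bounds show that $u_{\overline\lambda} := \lim_{\lambda\to\overline\lambda^-} u_\lambda$ is a classical solution of \refe{eq:1} at $\lambda = \overline\lambda$. Continuity of the first eigenvalue with respect to the $\mathcal C^2$ coefficients of $L_\lambda$ gives $\mu_1(\overline\lambda) = \lim_{\lambda\to\overline\lambda^-}\mu_1(\lambda) \geq 0$; were this limit strictly positive, $L_{\overline\lambda}$ would be invertible and the implicit function theorem applied at $(u_{\overline\lambda}, \overline\lambda)$ would extend the curve past $\overline\lambda$ while preserving invertibility of $L_\lambda$, contradicting the maximality of $\overline\lambda$. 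Thus $\mu_1(\overline\lambda) = 0$.

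To rule out $\overline\lambda < \lambda^*$, I assume the contrary. For $\lambda \in (\overline\lambda, \lambda^*]$ the minimal solution $u_\lambda$ remains classical with $u_\lambda > u_{\overline\lambda}$ (strong maximum principle), and $u_\lambda \to u_{\overline\lambda}$ in $\mathcal C^{2,\alpha}(\overline\Omega)$ as $\lambda \to \overline\lambda^+$. Setting $M_\lambda := \|u_\lambda - u_{\overline\lambda}\|_{L^\infty}$ and $v_\lambda := (u_\lambda - u_{\overline\lambda})/M_\lambda$, Schauder estimates give uniform $\mathcal C^{2,\alpha}$ bounds on $v_\lambda$, and up to a subsequence $v_\lambda \to \hat w := w^1_{\overline\lambda}/\|w^1_{\overline\lambda}\|_\infty$: the limit lies in $\ker L_{\overline\lambda}$, is non-negative and has unit $L^\infty$-norm. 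Expanding $\T(u_\lambda, \lambda) = 0$ to second order around $(u_{\overline\lambda}, \overline\lambda)$ and testing against $w^1_{\overline\lambda}$ (self-adjointness of $L_{\overline\lambda}$ and $L_{\overline\lambda}w^1_{\overline\lambda}=0$ annihilate the first-order term in $v_\lambda$) yields, at leading order,
\begin{equation*}
(\lambda - \overline\lambda)\!\int_\Omega f(u_{\overline\lambda})w^1_{\overline\lambda}\, dx + \frac{M_\lambda^2}{2}\!\left(\overline\lambda\!\int_\Omega f''(u_{\overline\lambda})\hat w^2 w^1_{\overline\lambda}\, dx - \!\int_\Omega a^{ijk}(\nabla u_{\overline\lambda})\partial_i \hat w\,\partial_j \hat w\,\partial_k w^1_{\overline\lambda}\,dx\right) = o(1).
\end{equation*}
Both terms in parentheses are non-negative: the first since $f''(u) = p(p-1)u^{p-2} \geq 0$ for $p\geq 1$ and $u_{\overline\lambda} \geq \underline u > 0$; the second by \refe{signaijk} applied with $\p = \nabla u_{\overline\lambda}$ and $\q = \nabla w^1_{\overline\lambda}$, which satisfy $\p \cdot \q = u_r w^1_r \geq 0$ because both $u_{\overline\lambda}$ and $w^1_{\overline\lambda}$ are radially non-increasing. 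A direct radial computation (using $u_r < 0$ on $(0,R)$ and the non-triviality of $w^1_{\overline\lambda}$) shows the sum is strictly positive, whereas the first term is strictly positive for $\lambda > \overline\lambda$ and $M_\lambda > 0$. This is the required contradiction, forcing $\overline\lambda = \lambda^*$ and, combined with the previous step, $\mu_1(\lambda^*) = 0$.

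The main obstacle will be the rigorous execution of this Lyapunov--Schmidt step: one has to control the relative sizes of $\lambda - \overline\lambda$ and $M_\lambda^2$ (arguing via the first-order Fredholm compatibility condition that $\lambda - \overline\lambda = O(M_\lambda^2)$), handle the error terms in the Taylor expansion uniformly in $\lambda$ close to $\overline\lambda^+$, and verify the strict positivity of the second-variation coefficient in all cases $p \geq 1$---in particular the linear case $p = 1$, where the $f''$ contribution vanishes and positivity rests entirely on the mean-curvature convexity inequality \refe{signaijk}.
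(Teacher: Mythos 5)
Your proof takes a genuinely different route from the paper's, and the paper's is considerably shorter and avoids the analytic pitfalls you acknowledge. The paper derives, for radially non-increasing $w\geq 0$, the inequality $\int_\Omega L_\lambda(u(\lambda)-u_\nu)\,w\,dx\leq(\lambda-\nu)\int_\Omega f(u_\nu)\,w\,dx$ (their \refe{subtractTradial}), obtained by combining the convexity of $f$ with the one-sided convexity of the mean-curvature map along radially non-increasing directions (their \refe{convexTradial}). Taking $\nu=\lambda$ and $w=w^1_\lambda$, self-adjointness of $L_\lambda$ turns the left side into $\mu_1(\lambda)\int_\Omega(u(\lambda)-u_\lambda)w^1_\lambda\,dx\leq 0$; since $u(\lambda)\geq u_\lambda$ by minimality and $\mu_1(\lambda),w^1_\lambda>0$, this forces $u(\lambda)=u_\lambda$ with no continuation or connectedness argument. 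To rule out $\overline\lambda<\lambda^*$ they take $\lambda=\overline\lambda$ (where $\mu_1=0$, so the left side vanishes) and any $\nu\in(\overline\lambda,\lambda^*]$; the right side is then strictly negative, a contradiction. No Lyapunov--Schmidt expansion, second variation, or asymptotic matching is needed.

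Your continuation-plus-bifurcation approach is plausible in outline but contains a gap that you do not flag and that I think is substantive. Both the openness step and the Lyapunov--Schmidt step rely on right-continuity of $\lambda\mapsto u_\lambda$ into $\mathcal C^{2,\alpha}_0(\overline\Omega)$, i.e. that $\lim_{\lambda\to\lambda_0^+}u_\lambda=u_{\lambda_0}$. Since $u_\lambda$ is non-decreasing in $\lambda$, the limit $\tilde u:=\lim_{\lambda\to\lambda_0^+}u_\lambda$ exists (uniform bounds plus Schauder give convergence in $\mathcal C^{2,\alpha}$) and is a classical solution at parameter $\lambda_0$ with $\tilde u\geq u_{\lambda_0}$; but nothing forces $\tilde u=u_{\lambda_0}$ without further argument. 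This is harmless where $\mu_1(\lambda_0)>0$ (local uniqueness near $u_{\lambda_0}$), but your second step invokes it precisely at $\lambda_0=\overline\lambda$, where $\mu_1$ vanishes and a jump in the minimal branch cannot be excluded a priori. If $\tilde u\neq u_{\overline\lambda}$, then $M_\lambda:=\|u_\lambda-u_{\overline\lambda}\|_\infty$ does not tend to $0$ and your expansion never begins. The convexity inequality that the paper uses closes exactly this loophole; a rigorous version of your argument would have to re-derive something equivalent (or separately establish right-continuity of the minimal branch at the critical parameter, which is not elementary). Apart from this, your sign bookkeeping for the second variation is consistent with the paper's computation in the proof of Theorem~\ref{thm:CR} --- in particular the use of \refe{signaijk} with $\p\cdot\q=u_r w^1_r\geq 0$ --- and you correctly flag that for $p=1$ the $f''$ contribution vanishes and strict positivity must come entirely from the $a^{ijk}$ term; you would also need to justify that the limit $\hat w$ is a nonzero multiple of $w^1_{\overline\lambda}$ (simplicity of $\mu_1$) and control the relative sizes of $\lambda-\overline\lambda$ and $M_\lambda^2$, as you note. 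All of this work is sidestepped by the paper's one-line application of \refe{subtractTradial}.
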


\begin{proof} We adapt the proof of Theorem 3.2 in \cite{KK}. Let $\lambda\in [0,\overline{\lambda})$, $\nu\in[0,\lambda^*]$. 
Using the fact that  $u(\lambda)$ and $u_\nu$ are solutions to \refe{eq:1}, we get:
\begin{align*}
L_\lambda(u(\lambda)-u_\nu)=&-\div(A(\nabla u(\lambda))\nabla(u(\lambda)-u_\nu))-\lambda f'(u(\lambda))(u(\lambda)-u_\nu)\\
=&\lambda f(u(\lambda))-\nu f(u_\nu)-\lambda f'(u(\lambda))(u(\lambda)-u_\nu)\\
&+\div[Tu(\lambda)-Tu_\nu-A(\nabla u(\lambda))\nabla(u(\lambda)-u_\nu)].
\end{align*}
 Since $f$ is convex, we have
\begin{equation*}
\lambda f(u(\lambda))-\nu f(u_\nu)-\lambda f'(u(\lambda))(u(\lambda)-u_\nu)\leq (\lambda-\nu)f(u_\nu)
\end{equation*}
and it follows from \refe{convexTradial} that
\begin{equation}
\int_\Omega L_\lambda(u(\lambda)-u_\nu) w dx\leq (\lambda-\nu)\int_\Omega f(u_\nu) w dx
\label{subtractTradial}\end{equation}
for any radially symmetric non-negative non-increasing function $w\in\mathcal C^{2,\alpha}(\Omega)$. Taking $\nu=\lambda$ and $w=w^1_\lambda$, the positive eigenvector corresponding to the first eigenvalue $\mu_1(\lambda)$,  we deduce:
\begin{equation*}
\mu_1(\lambda)\int_\Omega (u(\lambda)-u_\lambda)w^1_\lambda dx\leq 0.
\end{equation*}
We have $u(\lambda)-u_\lambda\geq 0$ since $u_\lambda$ is the minimal solution to \refe{eq:1} and $\mu_1(\lambda)>0$, $w^1_\lambda>0$ in $\Omega$, hence $u(\lambda)=u_\lambda$ in $\Omega$. 

We now extend the definition of $L_\lambda$ to the whole interval $[0,\lambda^*]$ by setting $L_\lambda=\T_u(u_\lambda,\lambda)$. In particular, $\mu_1(\overline{\lambda})=0$ and \refe{subtractTradial} is valid for $\lambda$ in the whole range $[0,\lambda^*]$. To prove the second part of Lemma~\ref{lem:minimalbranch}, assume by contradiction $\overline{\lambda}<\nu<\lambda^*$. Taking $\lambda=\overline{\lambda}$ and $w=w^1_{\overline{\lambda}}$ in \refe{subtractTradial}, we obtain 
\begin{equation*}
0\leq (\overline{\lambda}-\nu)\int_\Omega f(u_\nu)w^1_{\overline{\lambda}} dx.
\end{equation*}
This is impossible since $\overline{\lambda}<\nu$ and $\int_\Omega f(u_\nu)w^1_{\overline{\lambda}} dx>0$.
\end{proof}

We can now complete the proof of Theorem~\ref{thm:CR}. Let $w_*\in \mathcal C^{2,\alpha}_0(\overline{\Omega})$ be the first eigenvector of $L_{\lambda^*}$: $L_{\lambda^*}w_*=0$, $w_*>0$ in $\Omega$, $w_*$ is radial non-increasing with respect to $r$. 
Let $Z\subset \mathcal C^{2,\alpha}_0(\overline{\Omega})$ be the closed subspace of elements $z\in \mathcal C^{2,\alpha}_0(\overline{\Omega})$ orthogonal (for the $L^2(\Omega)$ scalar product) to $w_*$. Let $\T^*$ be the $\mathcal C^2$ map $\R\times Z\times\R\to \mathcal C^{\alpha}_0(\overline{\Omega})$ defined by 
\begin{equation*}
\T^*(s,z,\lambda)=\T(u_*+s w_* +z,\lambda).
\end{equation*} 
The derivative $\T^*_{z,\lambda}(0,0,\lambda_*)$ is invertible. Indeed, given $v\in \mathcal C^{\alpha}_0(\overline{\Omega})$, $(\zeta,\mu)\in Z\times \R$ is solution to 
\begin{equation*}
\T^*_{z,\lambda}(0,0,\lambda_*)\cdot(\zeta,\mu)=v
\end{equation*}
if
\begin{equation}
L_{\lambda^*}\zeta+\mu f(u_*)=v.
\label{eqinvstar}\end{equation}
By the Fredholm alternative (and the Schauder regularity theory for elliptic PDEs), \refe{eqinvstar} has a unique solution $\zeta\in Z$ provided 
\begin{equation*}
\mu\int_\Omega f(u_*) w_* dx=\int_\Omega v w_* dx.
\end{equation*}
This condition uniquely determines $\mu$ since $f(u_*),w_*>0$ in $\Omega$ and, in particular, $\int_\Omega f(u_*) w_* dx>0$.
By the Implicit Function Theorem, it follows that there is an $\eps>0$ and a $\mathcal C^2$-curve $(-\eps,\eps)\to Z\times\R$, $s \mapsto (z(s),\lambda(s))$ such that 
\begin{equation}
(z,\lambda)(0)=(0,\lambda^*),\quad \T^*(s,z(s),\lambda(s))=0,\quad\forall |s|<\eps.
\label{IFeq}\end{equation}
By derivating once with respect to $s$ in \refe{IFeq}, we obtain
\begin{equation}
L_{\lambda^*}w_*+\T^*_{z,\lambda}(0,0,\lambda^*)\cdot(z'(0),\lambda'(0))=0,
\label{IFonce}\end{equation}
hence $z'(0)=0$, $\lambda'(0)=0$. We set $u(s)=u_*+sw_*+z(s)$. Then $u'(0)=w_*>0$ in $\Omega$. To show the effective bending of the curve $s\mapsto (u(s),\lambda(s))$, there remains to prove that $\lambda''(0)<0$. Let us differentiate twice with respect to $s$ in \refe{IFeq}: we obtain
\begin{multline*}
-\partial_i(a^{ij}(\nabla u)\partial_j u'')-\lambda f'(u)u''-\partial_i(a^{ijk}(\nabla u)\partial_k u'\partial_j u')\\
=\lambda'' f(u)+2\lambda' f'(u) u'+\lambda f''(u)|u'|^2.
\end{multline*}
At $s=0$, this gives 
\begin{equation*}
L_{\lambda^*}u''(0)-\partial_i(a^{ijk}(\nabla u)\partial_k w_*\partial_j w_*)
=\lambda''(0) f(u_*)+\lambda^* f''(u_*)|w_*|^2.
\end{equation*}
Integrating the result against $w_*$ over $\Omega$, we deduce that
\begin{equation}
\int_\Omega a^{ijk}(\nabla u)\partial_k w_*\partial_j w_* \partial_i w_* dx =\lambda''(0)\int_\Omega f(u_*)w_* dx+\lambda^* \int_\Omega f''(u_*)w_*^3 dx.
\label{signlambdasecond}\end{equation}
Since $\nabla u\cdot\nabla w_*=\partial_r u\partial_r w_*\geq 0$, \refe{signaijk} shows that the left-hand side in \refe{signlambdasecond} is non-positive.
Finally, since $f(u_*),f''(u_*),w_*>0$, we get $\lambda''(0)<0$. \qed


\appendix


\section{Comparison principles}


It is well known that classical solutions of (\ref{eq:1}) satisfy a strong comparison principle, namely, if $u,v\in\Lip(\Omega)$ satisfy
\begin{equation}
-\div(Tu)\leq-\div(Tv)\mbox{  in } \Omega , \qquad u\leq v \mbox{ on } \partial\Omega
\label{eq:comparisonprinciple}\end{equation}
with $u\neq v$, then
\begin{equation}
u<v\mbox{ in }\Omega.
\label{strongmax}\end{equation} 
If $u,v$ are in  $W^{1,1}(\Omega)$ and satisfy \refe{eq:comparisonprinciple}, then we still have a weak comparison principle, {\it i.e.} $u\leq v$ a.e. in $\Omega$ (see \cite{GiustiBook}).
But no such principle holds for functions that are only  in $\mathrm{BV}(\Omega)$ (even if one of the function is smooth). 
This is due to the lack of strict convexity of the functional $\A$ on $\mathrm{BV}(\Omega)$ that is affine on any interval $[0,\varphi_A]$ (in particular, we have ${\mathcal L} (\varphi_A)={\mathcal L} (-\varphi_A)={\mathcal L} (0)=0$ for any finite perimeter set $A$).
\medskip

Throughout the paper, we consider weak solutions to \refe{eq:1} which are, {\it a priori}, not better (with respect to integrability properties of the gradient) than $\mathrm{BV}(\Omega)$.  
In order to derive comparison results, we use Lemma~\ref{lem:min}, which allows us to interpret weak solutions as global minimizers of an accurate functional and the following lemma.

\begin{lemma}[Comparison principle] Let $q\geq 1$. Let $G_\pm\colon\Omega\times\R\to\R$ satisfy the growth condition $|G_\pm(x,s)|\leq C_1(x)|s|^q+C_2(x)$ where $C_1\in L^{\infty}(\Omega)$ and $C_2\in L^1(\Omega)$. Let $\F_\pm$ be the functional defined on $L^q\cap \mathrm{BV}(\Omega)$ by 
\begin{equation*}
\F_\pm(v)=\A(v)+\int_{\partial\Omega}|v|d\H^{N-1}+\int_\Omega G_\pm(x,v)\, dx.
\end{equation*}
Suppose that $u_\pm$ is a global minimizer of $\F_\pm$ on a set $K_\pm$ and suppose that 
\begin{equation*}
\min(u_+,u_-)\in K_-,\quad \max(u_+, u_-) \in K_+,
\end{equation*}
Then we have
\begin{equation*}
0\leq\Delta(\max(u_+, u_-))- \Delta(u_+),\quad\Delta(v):=\int_\Omega G_+(x,v)-G_-(x,v)\, dx.
\end{equation*}
\label{lem:comparison}
\end{lemma}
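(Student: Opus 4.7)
The strategy is to exploit both minimality conditions simultaneously, using the admissible competitors given by the hypothesis. Specifically, since $\max(u_+, u_-) \in K_+$ and $\min(u_+, u_-) \in K_-$, I would begin from
\begin{equation*}
\F_+(u_+) \leq \F_+(\max(u_+, u_-)), \qquad \F_-(u_-) \leq \F_-(\min(u_+, u_-)),
\end{equation*}
and add these two inequalities. The plan is then to show that the area and boundary contributions on the two sides cancel (or cancel favourably), leaving behind only an inequality involving $G_+$ and $G_-$, which one then rewrites in the desired form.

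The key ingredient is the submodularity of the area functional:
\begin{equation*}
\A(\max(u_+, u_-)) + \A(\min(u_+, u_-)) \leq \A(u_+) + \A(u_-).
\end{equation*}
I would establish this via the subgraph characterisation $\A(u) = \int_{\Omega \times \R} |D\varphi_U|$ with $U = \{(x,t) \in \Omega \times \R : t < u(x)\}$ recalled right after \refe{defLu}. Since the subgraphs satisfy $U_{\max} = U_+ \cup U_-$ and $U_{\min} = U_+ \cap U_-$, the inequality reduces to the standard submodularity of the perimeter, $P(A \cup B) + P(A \cap B) \leq P(A) + P(B)$, which itself follows from the identity $D\varphi_A + D\varphi_B = D\varphi_{A \cup B} + D\varphi_{A \cap B}$ and lower semi-continuity of total variation. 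For the boundary contribution, the observation is simpler: pointwise on $\partial \Omega$ one has $|u_+| + |u_-| = |\max(u_+, u_-)| + |\min(u_+, u_-)|$ because $\{u_+(x), u_-(x)\} = \{\max, \min\}$ as a multiset, so the two boundary integrals coincide.

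Combining these two facts with the sum of the minimality inequalities, all area and boundary terms are absorbed favourably, leaving
\begin{equation*}
\int_\Omega \big[G_+(x, \max(u_+, u_-)) - G_+(x, u_+)\big] + \big[G_-(x, \min(u_+, u_-)) - G_-(x, u_-)\big]\, dx \geq 0.
\end{equation*}
To recognise this as $\Delta(\max(u_+, u_-)) - \Delta(u_+) \geq 0$, I would split $\Omega$ according to the sign of $u_+ - u_-$: on $\{u_+ \geq u_-\}$, both bracketed integrands vanish identically (since then $\max = u_+$, $\min = u_-$); on $\{u_+ < u_-\}$, one has $\max = u_-$ and $\min = u_+$, so the integrand simplifies to $(G_+ - G_-)(x, u_-) - (G_+ - G_-)(x, u_+)$, which is precisely the integrand of $\Delta(\max(u_+, u_-)) - \Delta(u_+)$ (again, this latter difference trivially vanishes on $\{u_+ \geq u_-\}$).

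The growth assumption on $G_\pm$ is used only to ensure integrability of all the terms on the BV competitors, so that the additions and cancellations are rigorously justified. The sole technical point is the area submodularity; once the subgraph representation is invoked, everything else is pointwise bookkeeping, so I expect no substantive obstacle beyond that.
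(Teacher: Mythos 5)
Your proposal is correct and follows essentially the same route as the paper: both minimality inequalities, the submodularity (\ref{capcuparea}) of $\A$ via the subgraph/perimeter identity, and the pointwise rearrangement equality for the boundary and $G_\pm$ terms. The only cosmetic difference is that the paper rewrites $\F_- = \F_+ - \Delta$ to avoid splitting $\Omega$ by the sign of $u_+ - u_-$, whereas you perform that pointwise case check explicitly; the content is identical.
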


\begin{proof}[Proof of Lemma~\ref{lem:comparison}] 
We need to recall the inequality
\begin{equation}
\int_{Q} |D\varphi_{E\cup F}|+\int_{Q} |D\varphi_{E\cap F}|\leq \int_{Q} |D\varphi_{E}|+\int_{Q} |D\varphi_{F}|,
\label{capcup}\end{equation}
which holds for any open set $Q\subset \R^m$ ($m\geq 1$) and any sets $E,F$ with locally finite perimeter in $\R^m$. 
Applied to $Q=\Omega\times\R$ and to the characteristic functions of the subgraphs of $u$ and $v$, Inequality~\refe{capcup} gives:
\begin{equation}
\A(\max(u, v))+\A(\min(u, v))\leq \A(u)+\A(v),\quad u,v\in \mathrm{BV}(\Omega). 
\label{capcuparea}\end{equation}
Since $\int_\Omega |Du|\leq\A(u)$, this shows in particular that $\max(u, v)$, $\min(u, v)$ and $(u-v)_+=\max(u,v)-v=u-\min(u, v)\in \mathrm{BV}(\Omega)$ whenever $u$ and $v\in \mathrm{BV}(\Omega)$.  
\medskip

Since $u\mapsto \int_\Omega G_\pm(u)$ is invariant by rearrangement, we deduce:
\begin{equation}\label{eq:comp2}
\F_-(\max(u_+, u_-))+\F_-(\min(u_+, u_-))\leq \F_-(u_+)+\F_-(u_-).
\end{equation}
Furthermore, we have $\min(u_+, u_-)\in K_-$, and so
$\F_-(u_-)\leq \F_-(\min(u_+, u_-))$.
Therefore, (\ref{eq:comp2})  implies that $\F_-(\max(u_+, u_-))\leq \F_-(u_+)$, which, by definition of $\Delta$ also reads:
$$ \F_+(\max(u_+, u_-))-\Delta(\max(u_+, u_-))\leq \F_+(u_+)-\Delta(u_+).$$ 
Finally, we recall that $u_+$ is the global minimizer of $\F_+$ on $K_+$ and that $\max (u_+, u_-)\in K_+$, and so 
 $\F_+(u_+)\leq\F_+(\max(u_+, u_-))$.
 We conclude that $\Delta(\max(u_+,u_-))- \Delta(u_+)\geq 0$.
\end{proof}

\providecommand{\bysame}{\leavevmode\hbox to3em{\hrulefill}\thinspace}
\providecommand{\MR}{\relax\ifhmode\unskip\space\fi MR }
\providecommand{\MRhref}[2]{%
  \href{http://www.ams.org/mathscinet-getitem?mr=#1}{#2}
}
\providecommand{\href}[2]{#2}


\begin{thebibliography}{BCMR96}

\bibitem[BCMR96]{BC}
H.~Brezis, T.~Cazenave, Y.~Martel, and A.~Ramiandrisoa, \emph{Blow up for
  {$u\sb t-\Delta u=g(u)$} revisited}, Adv. Differential Equations \textbf{1}
  (1996), no.~1, 73--90.

\bibitem[Ber10]{Bernstein10}
S.~Bernstein, \emph{Sur les surfaces d\'efinies au moyen de leur courbure
  moyenne ou totale}, Ann. Sci. \'Ecole Norm. Sup. (3) \textbf{27} (1910),
  233--256.

\bibitem[BL81]{berestyckilions81}
H.~Berestycki and P.-L. Lions, \emph{Sharp existence results for a class of
  semi-linear elliptic problems}, Bol. Soc. Brasil. Mat. \textbf{12} (1981),
  no.~1, 9--19.

\bibitem[BV93]{MFBV}
M. -F. Bidaut-V\'eron, \emph{On the regular or singular pendent water drops.}  Advances in geometric analysis and continuum mechanics (Stanford, CA, 1993),  57--65, Int. Press, Cambridge, MA, 1995. 


\bibitem[BV97]{BV}
H.~Brezis and J.-L. V{\'a}zquez, \emph{Blow-up solutions of some nonlinear
  elliptic problems}, Rev. Mat. Univ. Complut. Madrid \textbf{10} (1997),
  no.~2, 443--469.

\bibitem[Cab06]{Cabre06}
X.~Cabr{\'e}, \emph{Regularity of radial extremal solutions of semi-linear
  elliptic equations}, Bol. Soc. Esp. Mat. Apl. S$\vec{\rm e}$MA (2006),
  no.~34, 92--98.

\bibitem[CC06]{CabreCapella06}
X.~Cabr{\'e} and A.~Capella, \emph{Regularity of radial minimizers and extremal
  solutions of semi-linear elliptic equations}, J. Funct. Anal. \textbf{238}
  (2006), no.~2, 709--733.

\bibitem[CC07]{CabreCapella07}
\bysame, \emph{Regularity of minimizers for three elliptic problems: minimal
  cones, harmonic maps, and semi-linear equations}, Pure Appl. Math. Q.
  \textbf{3} (2007), no.~3, part 2, 801--825.

\bibitem[CF78]{CF} 
P. Concus and R. Finn, \emph{The shape of a pendent liquid drop.}  Philos. Trans. Roy. Soc. London Ser. A  \textbf{292}  (1978), no. 1391, 307--340.

\bibitem[CR75]{CR}
M.~G. Crandall and P.~H. Rabinowitz, \emph{Some continuation and variational
  methods for positive solutions of nonlinear elliptic eigenvalue problems},
  Arch. Rational Mech. Anal. \textbf{58} (1975), no.~3, 207--218.

\bibitem[DG57]{DeGiorgi57}
E.~De~Giorgi, \emph{Sulla differenziabilit\`a e l'analiticit\`a delle estremali
  degli integrali multipli regolari}, Mem. Accad. Sci. Torino. Cl. Sci. Fis.
  Mat. Nat. (3) \textbf{3} (1957), 25--43.

\bibitem[Eck82]{Ecker82}
K.~Ecker, \emph{Estimates for evolutionary surfaces of prescribed mean
  curvature}, Math. Z. \textbf{180} (1982), no.~2, 179--192.

\bibitem[EGG07]{EspositoGhoussoubGuo07}
P.~Esposito, N.~Ghoussoub, and Y.~Guo, \emph{Compactness along the branch of
  semistable and unstable solutions for an elliptic problem with a singular
  non-linearity}, Comm. Pure Appl. Math. \textbf{60} (2007), no.~12, 1731--1768.

\bibitem[Fin65]{Finn65}
R.~Finn, \emph{Remarks relevant to minimal surfaces, and to surfaces of
  prescribed mean curvature}, J. Analyse Math. \textbf{14} (1965), 139--160.

\bibitem[Fin86]{Finn86}
\bysame, \emph{Equilibrium capillary surfaces}, Grundlehren der Mathematischen
  Wissenschaften [Fundamental Principles of Mathematical Sciences], vol. 284,
  Springer-Verlag, New York, 1986.

\bibitem[Ger74]{Gerhardt74}
C.~Gerhardt, \emph{Existence and regularity of capillary surfaces}, Boll. Un.
  Mat. Ital. (4) \textbf{10} (1974), 317--335.

\bibitem[GG07]{GhoussoubGuo06}
N.~Ghoussoub and Y.~Guo, \emph{On the partial differential equations of
  electrostatic {MEMS} devices: stationary case}, SIAM J. Math. Anal.
  \textbf{38} (2006/07), no.~5, 1423--1449 (electronic).

\bibitem[GG08]{GhoussoubGuo08}
\bysame, \emph{On the partial differential equations of electrostatic {MEMS}
  devices. {II}. {D}ynamic case}, NoDEA Nonlinear Differential Equations Appl.
  \textbf{15} (2008), no.~1-2, 115--145.

\bibitem[Gia74]{Giaquinta74}
M.~Giaquinta, \emph{On the {D}irichlet problem for surfaces of prescribed mean
  curvature}, Manuscripta Math. \textbf{12} (1974), 73--86.

\bibitem[Giu76]{Giusti76}
E.~Giusti, \emph{Boundary value problems for non-parametric surfaces of
  prescribed mean curvature}, Ann. Scuola Norm. Sup. Pisa Cl. Sci. (4)
  \textbf{3} (1976), no.~3, 501--548.

\bibitem[Giu78]{Giusti78}
\bysame, \emph{On the equation of surfaces of prescribed mean curvature.
  {E}xistence and uniqueness without boundary conditions}, Invent. Math.
  \textbf{46} (1978), no.~2, 111--137.

\bibitem[Giu80]{Giusti80} \bysame, \emph{The pendent water drop. A direct approach.}  Boll. Un. Mat. Ital. A (5)  \textbf{17}  (1980), no. 3, 458--465. 

\bibitem[Giu84]{GiustiBook}
\bysame, \emph{Minimal surfaces and functions of bounded variation}, Monographs
  in Mathematics, vol.~80, Birkh\"auser Verlag, Basel, 1984.

\bibitem[GT01]{GT}
D.~Gilbarg and N.~S. Trudinger, \emph{Elliptic partial differential equations
  of second order}, Classics in Mathematics, Springer-Verlag, Berlin, 2001,
  Reprint of the 1998 edition.

\bibitem[GMT80]{GMT} E. Gonzalez, U. Massari, and I.  Tamanini, \emph{Existence and regularity for the problem of a pendent liquid drop}.  Pacific J. Math.  \textbf{88}  (1980), no. 2, 399--420.


\bibitem[Hui83]{Huisken83}
G.~Huisken, \emph{On pendent drops in a capillary tube}, Bull. Austral. Math. Soc. \textbf{28} (1983), no. 3, 343Ð354.

\bibitem[Hui84]{Huisken84}
G.~Huisken, \emph{Capillary surfaces in negative gravitational fields}, Math.
  Z. \textbf{185} (1984), no.~4, 449--464.


\bibitem[JL73]{joseph-lundgren}
D.~D. Joseph and T.~S. Lundgren, \emph{Quasilinear {D}irichlet problems driven
  by positive sources}, Arch. Rational Mech. Anal. \textbf{49} (1972/73),
  241--269.

\bibitem[KK74]{KK}
J.~P. Keener and H.~B. Keller, \emph{Positive solutions of convex nonlinear
  eigenvalue problems}, J. Differential Equations \textbf{16} (1974), 103--125.

\bibitem[Mar97]{martel}
Y.~Martel, \emph{Uniqueness of weak extremal solutions of nonlinear elliptic
  problems}, Houston J. Math. \textbf{23} (1997), no.~1, 161--168.

\bibitem[Mas74]{Ma}
U.~Massari, \emph{Esistenza e regolarit\`a delle ipersuperfice di curvatura
  media assegnata in {$R\sp{n}$}}, Arch. Rational Mech. Anal. \textbf{55}
  (1974), 357--382.

\bibitem[Mir64]{Mi}
M.~Miranda, \emph{Superfici cartesiane generalizzate ed insiemi di perimetro
  localmente finito sui prodotti cartesiani}, Ann. Scuola Norm. Sup. Pisa (3)
  \textbf{18} (1964), 515--542.

\bibitem[MP80]{mignot-puel}
F.~Mignot and J.-P. Puel, \emph{Sur une classe de probl\`emes non lin\'eaires
  avec non lin\'eairit\'e positive, croissante, convexe}, Comm. Partial
  Differential Equations \textbf{5} (1980), no.~8, 791--836.

\bibitem[MR96]{MironescuRadulescu96}
P.~Mironescu and V.~D. R{\u{a}}dulescu, \emph{The study of a bifurcation
  problem associated to an asymptotically linear function}, Nonlinear Anal.
  \textbf{26} (1996), no.~4, 857--875.

\bibitem[Ned00]{nedev} G. Nedev, \emph{Regularity of the extremal solution of semi-linear elliptic equations}, C. R. Acad. Sci. Paris S\'er. I Math. \textbf{330} (2000), no. 11, 997Ð1002.

\bibitem[PS86]{ps86}
P.~Pucci and J.~Serrin, \emph{A general variational identity}, Indiana Univ.
  Math. J. \textbf{35} (1986), no.~3, 681--703.

\bibitem[Ser69]{serrin}
J.~Serrin, \emph{The problem of {D}irichlet for quasilinear elliptic
  differential equations with many independent variables}, Philos. Trans. Roy.
  Soc. London Ser. A \textbf{264} (1969), 413--496.


\bibitem[Sta66]{Stampacchia65}
G.~Stampacchia, \emph{\'{E}quations elliptiques du second ordre \`a
  coefficients discontinus}, S\'eminaire de Math\'ematiques Sup\'erieures, No.
  16 (\'Et\'e, 1965), Les Presses de l'Universit\'e de Montr\'eal, Montreal,
  Que., 1966.

\bibitem[Sto94]{stone} A. Stone, \emph{Evolutionary existence proofs for the pendant drop and n- dimensional catenary problems}, Pacific J. Math. \textbf{164} (1994), no. 1, 147--178.

\end{thebibliography}
\end{document}